\newtheorem{theorem}{Theorem}[section]
\newtheorem{Lemma}[theorem]{Lemma}
\newtheorem{definition}[theorem]{Definition}
\newtheorem{claim}[theorem]{Claim}
\theoremstyle{definition}
\newtheorem{Example}[theorem]{Example}
\newtheorem{Remark}[theorem]{Remark}
\newcommand{\tens}[1]{\mathcal{#1}}
\newcommand{\rank}{{\rm rank\,}}
\newcommand{\argmin}{\text{argmin}}
\newcommand{\kron}{\otimes}  % matrix Kronecker product 
\newcommand{\khatri}{\odot} % Khatri-Rao product
\newcommand{\hadam}{\boxdot} % Hadamard elementwise product
\newcommand{\out}{\pmb\kron}  % outer (tensor) product
\begin{document}
\title%[HOSVD-Based Algorithm for Weighted tensor completion ]
{HOSVD-Based  Algorithm for Weighted Tensor Completion
}

 \author{Zehan Chao, Longxiu Huang and Deanna Needell}
 \date{}
% \address{Department of Mathematics, University of California, Los Angeles, CA 90095 USA}
% \email{huangl3@math.ucla.edu}

%\keywords{Tensor Completion, Sampling Patterns}
%\subjclass[2010]{15A69,15A83, 65F30,68P99,68W20,65F99}
%%%%%%%%%%%%%%%%%%%%%%%%%%%%%%%%%%%%%%%%%%%

%%%%%%%%%%%%%%%%%%%%%%%%%%%%%%%%%%%%%%%%%%%
%%%%%%    Abstract                 %%%%%%%%
%%%%%%%%%%%%%%%%%%%%%%%%%%%%%%%%%%%%%%%%%%%

\maketitle

%%%%%%%%%%%%%%%%%%%%%%%%%%%%%%%%%%%%%%%%%%%
%%%%%%%%%%%%%%%%%%%%%%%%%%%%%%%%%%%%%%%%%%%
%%%%%%    Introduction             %%%%%%%%
%%%%%%%%%%%%%%%%%%%%%%%%%%%%%%%%%%%%%%%%%%%
%%%%%%%%%%%%%%%%%%%%%%%%%%%%%%%%%%%%%%%%%%%
\begin{abstract}
Matrix completion, the problem of completing missing entries in a data matrix with low-dimensional structure (such as rank), has seen many fruitful approaches and analyses. Tensor completion is the tensor analog that attempts to impute missing tensor entries from similar low-rank type assumptions. In this paper, we  study the tensor completion problem when the sampling pattern is deterministic and possibly non-uniform. We first propose an efficient weighted Higher Order Singular Value Decomposition (HOSVD) algorithm for the recovery of the underlying low-rank tensor from noisy observations and then derive the error bounds under a properly weighted metric. Additionally,  the efficiency and accuracy of our algorithm  are both tested using synthetic and real datasets in numerical simulations.
\end{abstract}

\textbf{Keyword:} {HOSVD decomposition; tensor completion; weighted tensor} 

\section{Introduction}

In many data-rich domains such as computer vision, neuroscience, and social networks, %consisting of multi-modal and multi-relational data, 
tensors have emerged as a powerful paradigm for handling the data deluge. In recent years, tensor analysis has gained more and more attention. 
To a certain degree, tensors can be viewed as the generalization of matrices to higher dimensions, and thus multiple questions from matrix analysis extend naturally to tensors.  %In one direction, like
Similar to matrix decomposition, the problem of tensor decomposition (decomposing an input tensor into several less complex components) has been widely studied both in theory and application (see e.g.,  \cite{HF1927,kolda2009tensor,ZOIA2018}).  Thus far, the problem of low-rank tensor completion, which aims to complete missing or unobserved entries of a low-rank tensor, is one of the most actively studied problems (see e.g.,  \cite{GCZS16,LMWY12,LSCT14,SGCH19}). It is noteworthy that, as caused by various unpredictable or unavoidable reasons, multidimensional datasets are commonly raw and incomplete, and thus often only a small subset of entries of tensors are available. 
It is, therefore, natural to address the above issue using tensor completion in modern data-driven applications, in which data are naturally represented as a tensor, such as image/video inpainting \cite{kressner2014low,LMWY12}, link-prediction \cite{ermics2015link}, and recommendation systems \cite{symeonidis2008tag}, to name a few.

In the past few decades, the matrix completion problem, which is a special case of tensor completion, has been extensively studied. In matrix completion, there are mature algorithms \cite{CCS10}, theoretical foundations \cite{ CZ2016, CP2010,CR9} and various applications  \cite{AFSU7,CCWY2019,GL2011,LV2009} that pave the way for solving the tensor completion problem in high-order tensors. 
Recently, \mbox{Foucart et al. \cite{foucart2019weighted}} proposed a simple algorithm for matrix completion for general deterministic sampling patterns, and raised the following questions:
%\begin{question}\label{ques:FN}
given a deterministic sampling pattern $\Omega$ and corresponding (possibly noisy) observations of the matrix entries, what type of recovery error can we expect? In what metric? How can we efficiently implement recovery?
%\end{question}
These were investigated in \cite{foucart2019weighted} by introducing an appropriate \textit{weighted} error metric for matrix recovery of the form $\|H\hadam(\widehat{M}-M)\|_F$, where $M$ is the true underlying low-rank matrix, $\widehat{M}$ refers to the recovered matrix, and $H$ is a best rank-1 matrix approximation for the sampling pattern $\Omega$.
In this regard, similar questions arise for the problem of tensor completion with deterministic sampling patterns. Unfortunately, as is often the case, moving from the matrix setting to the tensor setting presents non-trivial challenges, and notions such as \textit{rank} and SVD need to be re-defined and re-evaluated. We address these extensions for the completion problem here.
%\begin{question}\label{ques:HN}
%Given a deterministic sampling pattern $\Omega$ and corresponding (possibly noisy) observations of the tensor entries, what type of recovery error can we expect? In what metric? How can we efficiently implement recovery?
%\end{question}

Motivated by the matrix case, we propose an appropriate \textit{weighted} error metric for tensor recovery of the form $\|\mathcal{H}\hadam(\widehat{\mathcal{T}}-\mathcal{T})\|_F$, where $\mathcal{T}$ is the true underlying low-rank tensor, $\widehat{\mathcal{T}}$ is the recovered tensor, and $\mathcal{H}$ is an appropriate weight tensor. For the existing work, the error is only limited to the form $\|\widehat{\mathcal{T}}-\mathcal{T}\|_F$, which corresponds to the case that all the entries of $\mathcal{H}$ are 1, where $\mathcal{H}$ can be considered to be a {CP} %PLease define, if appropriate.
 rank-1 tensor. It motivates us to rephrase the questions mentioned above as follows.

%\begin{question}\label{main question}
{\bf Main questions. } %mdpi: is the bold necessary? if not, please make to normal.
Given a sampling pattern $\Omega$, and noisy observations $\mathcal{T}+\mathcal{Z}$ on $\Omega$, for what rank-one weight tensor $\mathcal{H}$ can we efficiently find a tensor $\widehat{\mathcal{T}}$ so that \mbox{$\|\mathcal{H}\hadam(\widehat{\mathcal{T}}-\mathcal{T})\|_F$} is small compared to $\left\|\mathcal{H}\right\|_F$? And how can we efficiently find such weight tensor $\mathcal{H}$, or determine that a fixed $\mathcal{H}$ has this property?
%\end{question}
\subsection{Contributions}
Our main goal is to provide an algorithmic tool, theoretical analysis, and numerical results that address the above questions.  In this paper, we  propose a simple weighted Higher Order Singular Value Decomposition (HOSVD) method. Before we implement the weighted HOSVD algorithm, we first   appropriately approximate the sampling pattern $\Omega$ with a rank one tensor $\mathcal{H}$.  We can achieve high accuracy if $\|\mathcal{H}-\mathcal{H}^{(-1)}\hadam\boldsymbol{1}_{\Omega}\|_F$ is small, where $\mathcal{H}^{(-1)}$ denotes the element-wise inverse. Finally, we present empirical results on synthetic and real datasets. The simulation results show that when the sampling pattern is non-uniform, the use of weights in the weighted HOSVD algorithm is essential, and the results of the weighted HOSVD algorithm can provide a very good initialization for the total variation minimization algorithm which can dramatically reduce the iterative steps without lose the accuracy.  {In doing so, we extend the weighted matrix completion results of \cite{foucart2019weighted} to the tensor setting.}

\subsection{Organization}
The paper is organized as follows. In Section \ref{section:tcp}, we give a brief review of  related work and concepts for tensor analysis, instantiate notations, and state the tensor completion problem under study. Our main results are stated in Section \ref{sec:results} and the proofs are provided in Appendices \ref{section:proofFgub} and  \ref{section:proofFHOSVD}.  The numerical results are provided and discussed in Section \ref{section:simulations}. 

\section{Related Work, Background, and Problem Statement}\label{section:tcp}
In this section, we give a brief overview of the works that are related to ours, introduce some necessary background information about tensors, and finally give a formal statement of tensor completion problem under study.  The related work can be  divided into two lines: that based on matrix completion problems, which leads to a discussion of weighted matrix completion and related work, and that based on tensor analysis, in which we focus on CP and Tucker decompositions. 

\subsection{Matrix Completion}
The matrix completion problem is to determine a complete $d_1\times d_2$ matrix $M$ from its partial entries on a subset $\Omega\subseteq[d_1]\times [d_2]$.  We use {$\boldsymbol{1}$}$_{\Omega}$ %MDPI: please check if all bold in equation is necessary. please check the whole text. Pleas also keep consistance of format.
to denote the matrix whose entries are $1$ on $\Omega$ and $0$ elsewhere so that the entries of $M_{\Omega}=\boldsymbol{1}_{\Omega}\hadam M$ are equal to those of the matrix $M$ on $\Omega$, and are equal to $0$ elsewhere, where $\hadam$ denotes the Hadamard product.
There are various works that aim to understand matrix completion  with respect to the sampling pattern $\Omega$. For example, the works in \cite{BJ2014,HSS2014,LLR2016} relate the sampling pattern $\Omega$ to a graph whose adjacency matrix is given by $\boldsymbol{1}_{\Omega}$ and show that as long as the sampling pattern $\Omega$ is suitably close to an expander, efficient recovery is possible when the given  matrix $M$ is sufficiently incoherent.  Mathematically, the task of understanding when there exists a unique  low-rank matrix $M$ that can complete $M_{\Omega}$ as a function of the sampling pattern $\Omega$ is very important. In \cite{PBN2016}, the authors give conditions on $\Omega$ under which there are only finitely many low-rank matrices that agree with $M_{\Omega}$, and the work of \cite{SXZ2018} gives a condition under which the matrix can be locally uniquely completed. The work in \cite{AAW2017} generalized the results of \cite{PBN2016,SXZ2018} to the setting where there is sparse noise added to the matrix. The works \cite{AWA2017,PN2016} study when  rank estimation is possible as a function of a deterministic pattern $\Omega$. Recently, \cite{C2019} gave a combinatorial condition on $\Omega$ that characterizes when a low-rank matrix can be recovered up to a small error in the Frobenius norm from observations in $\Omega$ and showed that nuclear  minimization will approximately recover $M$ whenever it is possible, where the \textit{nuclear norm }%Please check if this italic necassary, if not, please chnage to normal.
 of $M$ is defined as $\|M\|_*:=\sum_{i=1}^{r}\sigma_i $
with $\sigma_1,\cdots,\sigma_r$  the non-zero singular values of $M$.

All the works mentioned above are in the setting where recovery of the entire matrix is possible, but in many cases full recovery is impossible. Ref. 
\cite{KTT2012}  uses an algebraic approach to answer the question of when an individual entry can be completed. There are many works (see e.g., \cite{EYW2018,NW2012}) that introduce a weight matrix for capturing the recovery results of the desired entries.
%Alternatively, most of the paper \cite{EYW2018,NW2012} would like to introduce a weight matrix to capture this  in the literature. 
The work \cite{HSS2014} shows that, for any weight matrix, $H$, there is a deterministic sampling pattern $\Omega$ and an algorithm that returns $\widehat{M}$ using the observation $M_{\Omega}$ such that $\|H\hadam(\widehat{M}-M)\|_F$ is small. The work \cite{LS2013}  generalizes the algorithm in \cite{HSS2014} to find the ``simplest'' matrix that is correct on the observed entries. Succinctly, their works give a way of measuring which deterministic sampling patterns, $\Omega$, are ``good'' with respect to a weight matrix $H$. In contrast to these two works, \cite{foucart2019weighted} is interested in  the problem of whether one can find a weight matrix $H$ and  create an efficient algorithm to find an estimate $\widehat{M}$ for an underlying low-rank matrix $M$ from a    sampling pattern $\Omega$ and   noisy samples $M_{\Omega}+Z_{\Omega}$ such that $\|H\hadam(\widehat{M}-M)\|_F$ is  small.

 {In particular, one of our theoretical results is that we generalize the upper bounds for weighted recovery of low-rank matrices from deterministic sampling patterns in \cite{foucart2019weighted} to the upper bound of tensor weighted recovery. The details of the connection between our result and the matrix setting result in \cite{foucart2019weighted} is discussed in Section \ref{sec:results}. }

\subsection{Tensor Completion Problem}
Tensor completion is  the problem of filling in the missing elements of partially observed tensors. 
Similar to the matrix completion problem, \textit{low rankness} is often a necessary hypothesis to restrict the degrees of freedom of the missing entries for the tensor completion problem. Since there are multiple definitions of the rank of a tensor, this completion problem has several variations. %the low-rank tensor completion problems have various versions.

The most common tensor completion problems \cite{GRY2011,LMWY12} may be summarized as follows (we will define the different ranks subsequently, see further on in this section).
%In general, several most popular definitions \cite{GRY2011,LMWY12} can be summarized as follows.
\begin {definition}[Low-rank tensor completion (LRTC), \cite{SGCH19}] 
Given a low-rank (CP rank, Tucker rank, or other ranks) tensor $\mathcal{T}$  and sampling pattern $\Omega$, %with missing entries, 
the low-rank completion of $\mathcal{T}$ is given by the solution of the following optimization problem:
%the goal of completing it can be formulated as the following optimization problem:
\begin{eqnarray}\label{eqn:g-lrtc}
&&\underset{\mathcal{X}}{\text{min} } \text{ rank}_*(\mathcal{X})\nonumber\\
&&\text{subject to }\mathcal{X}_{\Omega}=\mathcal{T}_{\Omega},
\end{eqnarray}
where $\text{rank}_*$ denotes the specific tensor rank assumed at the beginning.
\end {definition}
In the literature, there are many variants of  LRTC but most of them are based on the following  questions:
\begin{enumerate}
    \item [(1)]
    What type of the rank should one use (see e.g., \cite{AW2017fundamental,BM2016,JO2014})? 
    \item[(2)]
    Are there any other restrictions based on the observations that one can assume (see e.g., \cite{GQ2014,LMWY12,MHWG2014})?
    \item[(3)] Under what conditions can one expect to achieve a unique and exact completion (see e.g., \cite{AW2017fundamental})?
\end{enumerate}

In the rest of this section,   we instantiate some notations and review  basic operations and  definitions  related to tensors. Then some tensor decomposition-based algorithms for tensor completion are stated. Finally, a formal problem statement under study will \mbox{be presented}.

\subsubsection{Preliminaries and Notations} \label{sec: notation}
 Tensors, matrices, vectors, and scalars are denoted in different typeface for clarity below. In the sequel, calligraphic boldface capital letters are used for tensors,  capital letters are used for matrices, lower boldface letters for vectors, and regular letters for scalars.  The set of the first $d$ natural numbers is denoted by $[d]:=\{1,\cdots,d\}$. Let $\mathcal{X}\in\mathbb{R}^{d_1\times\cdots\times d_n}$ and $\alpha\in\mathbb{R}$, $\mathcal{X}^{(\alpha)}$ represents the element-wise power operator, i.e., $(\mathcal{X}^{(\alpha)})_{i_1\cdots i_n}=\mathcal{X}_{i_1\cdots i_n}^{\alpha}$. $\boldsymbol{1}_{\Omega}\in\mathbb{R}^{d_1\times\cdots\times d_n}$ denotes the tensor with 1 on $\Omega$ and $0$ otherwise. We use $\mathcal{X}\succ 0$ to denote the tensor with $\mathcal{X}_{i_1\cdots i_n}>0$ for all $i_1,\cdots, i_n$. Moreover, we say that $\Omega \sim \mathcal{W}$ if  the entries of $\mathcal{X}$ are sampled randomly with the sampling set $\Omega$ such that $(i_1,\cdots,i_n)\in\Omega$  with probability $\mathcal{W}_{i_1\cdots i_n}$. We include here some basic notions relating to tensors, and refer the reader to e.g., \cite{kolda2009tensor} for a more thorough survey.
 
\begin {definition}[Tensor]
A tensor is a multidimensional array. The dimension of a tensor is called the order (also called the mode). The space of real tensors  of order $n$ and  size $d_1\times\cdots\times d_n$ is denoted as $\mathbb{R}^{d_1\times  \cdots\times d_n}$. 
The elements of a tensor $\mathcal{X}\in\mathbb{R}^{d_1\times \cdots\times d_n}$ are denoted by $\mathcal{X}_{i_1\cdots i_n}$.
\end {definition}
An $n$-order  tensor $\mathcal{X}$ can be matricized in $n$ ways by unfolding it along each of the $n$ modes. The definition for the matricization of a given tensor is stated below.
\begin {definition}[Matricization/unfolding of a tensor] 
The mode-$k$ matricization/unfolding of tensor $\mathcal{X}\in\mathbb{R}^{d_1\times  \cdots\times d_n}$ is the matrix, which is denoted as $\mathcal{X}_{(k)}\in\mathbb{R}^{d_k\times\prod\limits_{j\neq k}d_j}$, whose columns are composed of all the vectors obtained from $\mathcal{X}$ by fixing all indices except for the $k$-th dimension.  The mapping $\mathcal{X}\mapsto \mathcal{X}_{(k)}$ is called the mode-$k$ unfolding operator.
\end {definition}
\begin{Example}
Let $\mathcal{X}\in\mathbb{R}^{3\times 4\times 2}$ with the following frontal slices:
\[X_1=\begin{bmatrix}
1&4&7&10\\
2&5&8&11\\
3&6&9&12
\end{bmatrix} \quad~~~ X_2=\begin{bmatrix}
13&16&19&22\\
14&17&20&23\\
15&18&21&24
\end{bmatrix},
\]
then the three mode-$n$ matricizations  are
\begin{eqnarray*}
\mathcal{X}_{(1)}&=&\begin{bmatrix}
1&4&7&10&13&16&19&22\\
2&5&8&11&14&17&20&23\\
3&6&9&12&15&18&21&24
\end{bmatrix},\\
\mathcal{X}_{(2)}&=&\begin{bmatrix}
1&2&3&13&14&15\\
4&5&6&16&17&18\\
7&8&9&19&20&21\\
10&11&12&22&23&24
\end{bmatrix},\\
\mathcal{X}_{(3)}&=&\begin{bmatrix}
1&2&3&\cdots&10&11&12\\
13&14&15&\cdots&22&23&24
\end{bmatrix}.
\end{eqnarray*}
\end{Example}
\begin{definition}[Folding operator]
Suppose that $\mathcal{X}$ is a tensor. The mode-$k$ folding operator of a matrix $M=\mathcal{X}_{(k)}$, denoted as $\text{fold}_k(M)$, is the inverse operator of the unfolding operator.
\end{definition}
\begin{definition}[$\infty$-norm]
Given $\mathcal{X}\in\mathbb{R}^{d_1\times \cdots\times d_n}$, the norm $\left\|\mathcal{X}\right\|_\infty$ is defined as
\[\left\|\mathcal{X}\right\|_{\infty}=\max_{i_1,\cdots,i_n}|\mathcal{X}_{i_1\cdots i_n}|.
\] 

The unit ball under the $\infty$-norm is denoted by {$\boldsymbol{B}$}$_\infty$. %mdpi: please unify the symbol's italic or normal . please confirm the whole text.
\end{definition}
\begin{definition}[Frobenius norm]
The Frobenius norm for a tensor $\mathcal{X}\in\mathbb{R}^{d_1\times  \cdots\times d_n}$ is defined as
\[\left\|\mathcal{X}\right\|_F=\sqrt{\sum_{i_1,\cdots, i_n}\mathcal{X}_{i_1\cdots i_n}^2}.\]
\end{definition}
\begin{definition}[Max-norm for matrix]Given $X\in\mathbb{R}^{d_1\times d_2}$, the max-norm for $X$ is defined as
\[\left\|X\right\|_{max}=\min_{X=UV^T}\left\|U\right\|_{2,\infty}\left\|V\right\|_{2,\infty}.
\]
\end{definition}
\begin{definition}[Product operations]~
\begin{enumerate}
\item[$\bullet$]Outer product: Let $\boldsymbol{a_1}\in\mathbb{R}^{d_1}, \cdots, \boldsymbol{a_n}\in\mathbb{R}^{d_n}$. The outer product among these $n$ vectors is a tensor $\mathcal{X}\in\mathbb{R}^{d_1\times\cdots\times d_n}$ defined as:
\[\mathcal{X}=\boldsymbol{a}_1\out \cdots\out \boldsymbol{a}_n, ~~ \mathcal{X}_{i_1,\cdots,i_n}=\prod\limits_{k=1}^n \boldsymbol{a}_k(i_k).\]
The tensor $\mathcal{X}\in\mathbb{R}^{d_1\times\cdots\times d_n}$ is of rank one if it can be written as the outer product of $n$ vectors.
    \item [$\bullet$]Kronecker product of matrices: The Kronecker product of $A\in\mathbb{R}^{I\times J}$ and $B\in\mathbb{R}^{K\times L}$ is denoted by $A\kron B$. The result is a matrix of size $(KI)\times (JL)$ defined by
\begin{eqnarray*}
A\kron B&=&\begin{bmatrix}
A_{11}B&A_{12}B&\cdots &A_{1J}B\\
A_{21}B&A_{22}B&\cdots &A_{2J}B\\
\vdots&\vdots&\ddots&\vdots\\
A_{I1}B&A_{I2}B&\cdots&A_{IJ}B
\end{bmatrix}.
\end{eqnarray*}
\item[$\bullet$] Khatri-Rao product:
Given matrices $A\in\mathbb{R}^{d_1\times r}$ and $B\in\mathbb{R}^{d_2\times r}$, their Khatri-Rao product is denoted by $A\khatri B$. The result is a matrix of size  $(d_1d_2)\times r$ defined by
\[A\khatri B=\begin{bmatrix}
\boldsymbol{a_1}\kron \boldsymbol{b_1}&\cdots&\boldsymbol{a_r}\kron\boldsymbol{ b_r}
\end{bmatrix},
\]
where $\boldsymbol{a_i}$ and $\boldsymbol{b_i}$ stand for the $i$-th column of $A$ and $B$ respectively. 
\item [$\bullet$] Hadamard product: 
Given $\mathcal{X},\mathcal{Y}\in\mathbb{R}^{d_1\times\cdots\times d_n}$, their Hadamard product $\mathcal{X}\hadam \mathcal{Y} \in\mathbb{R}^{d_1\times\cdots\times d_n}$ is defined by element-wise multiplication, i.e.,
\[(\mathcal{X}\hadam\mathcal{Y})_{i_1\cdots i_n}=\mathcal{X}_{i_1\cdots i_n}\mathcal{Y}_{i_1\cdots i_n}.
\]
\item[$\bullet$] Mode-$k$ product: Let $\mathcal{X}\in\mathbb{R}^{d_1\times \cdots\times d_n}$ and $U\in\mathbb{R}^{d_k\times J}$, the multiplication between $\mathcal{X}$ on its mode-$k$ with $U$ is denoted as $\mathcal{Y}=\mathcal{X}\times_k U$ with 
\[\mathcal{Y}_{i_1,\cdots,i_{k-1},j,i_{k+1},\cdots,i_{n}}=\sum_{s=1}^{d_k}\mathcal{X}_{i_1,\cdots,i_{k-1},s,i_{k+1},\cdots,i_{n}}U_{s,j}.
\]
\end{enumerate}
\end{definition}
% \begin{definition}[Rank-one Tensors]
% An $n$-order tensor $\mathcal{X}\in\mathbb{R}^{d_1\times \cdots\times d_n}$ is of  rank one if it can be written as the outer product of $n$ vectors, i.e.,
% \[\mathcal{X}=\boldsymbol{a_1}\out\cdots\out \boldsymbol{a_n}.\] 
% \end{definition}
\begin{definition}[Tensor (CP) rank \cite{HF1927,HF1928}]
The (CP) \textit{rank} of a tensor $\mathcal{X}$, denoted $\text{rank}(\mathcal{X})$, is defined as the smallest number of rank-1 tensors that generate $\mathcal{X}$ as their sum. We use $K_r$ to denote the cone of rank-r tensors.
\end{definition}
% Given $\boldsymbol{\lambda}\in\mathbb{R}^{r}$, ${}^kM\in\mathbb{R}^{d_k\times r}$, we use $\llbracket\boldsymbol{\lambda};{}^1M,\cdots,{}^nM\rrbracket$ to be the CP representation of tensor $\mathcal{X}$, i.e.,
 Given  ${}^kM\in\mathbb{R}^{d_k\times r}$, we use $\llbracket {}^1M,\cdots,{}^nM\rrbracket$ to denote the CP representation of tensor \mbox{$\mathcal{X}$, i.e.,}
\[\mathcal{X}=\sum_{j=1}^r \left({}^1M(:,j)\out\cdots\out{}^nM(:,j)\right),
\]
where $M(:,j)$ means the $j$-th column of the matrix $M$.

Different from the case of matrices, the rank of a tensor is not presently  well understood. Additionally, the task of computing the rank of a tensor is an NP-hard problem \cite{k1989}.  Next we  introduce an alternative definition of the rank of a tensor, which is easy to compute.
\begin{definition}[Tensor Tucker rank  \cite{HF1928}]Let $\mathcal{X}\in\mathbb{R}^{d_1\times\cdots\times d_n}$.
The tuple $(r_1,\cdots,r_n)\in\mathbb{N}^{n}$ is called the Tucker rank of the tensor $\mathcal{X}$, where $r_k=\text{rank}(\mathcal{X}_{(k)})$. We use $K_{\boldsymbol{r}}$ to denote the cone of  tensors with   Tucker rank {$\boldsymbol{r}$}. 
\end{definition}
 
%\subsubsection{Tensor decomposition based algorithms for tensor completion} 
Tensor decompositions  are powerful tools for extracting meaningful, latent structures
in heterogeneous, multidimensional data (see e.g., \cite{kolda2009tensor}). In this paper,  we focus on two most widely used decomposition methods: CP and HOSVD. For  more comprehensive introduction, readers  are referred to \cite{AY2008,kolda2009tensor,SDFHPF2017}. 

\subsubsection{CP-Based Method for Tensor Completion} 
The CP decomposition was first proposed by Hitchcock \cite{HF1927} and further discussed \mbox{in  \cite{CC1970}}. The formal definition of the  CP decomposition is  the following.
\begin{definition}[CP decomposition]
Given a tensor $\mathcal{X}\in\mathbb{R}^{d_1\times\cdots\times d_n}$, its CP decomposition  is an approximation of $n$ loading matrices $A_{k}\in\mathbb{R}^{d_k\times r}$, $k=1,\cdots,n$, such that
\[ \mathcal{X}\approx\llbracket A_1,\cdots,A_n\rrbracket=\sum_{i=1}^{r}A_1(:,i)\out\cdots\out A_{n}(:,i),
\]
where $r$ is a positive integer denoting an upper bound of the rank of $\mathcal{X}$ and $A_k(:,i)$ is the $i$-th column of matrix $A_k$. If we  unfold $\mathcal{X}$ along its $k$-th mode, we have 
\[\mathcal{X}_{(k)}\approx A_{k}(A_1\khatri\ldots\khatri A_{k-1}\khatri A_{k+1}\khatri\cdots\khatri A_{n})^T.
\]
\end{definition}
 {Here the $\approx$ sign means that the algorithm should find an optimal $\widehat{\tens{X}}$ with the given rank such that the distance between the low-rank approximation and the original tensor, $\|\tens{X}-\widehat{\tens{X}}\|_F$, is minimized.}

Given an observation set $\Omega$, the main idea to implement   tensor completion for a low-rank tensor $\mathcal{T}$ is to conduct imputation based on the equation 
\[ \mathcal{X}= \mathcal{T}_{\Omega}+\widehat{\mathcal{X}}_{\Omega^c},
\] where $\widehat{\mathcal{X}}=\llbracket A_1,\cdots,A_n\rrbracket$ is the interim low-rank approximation based on  the CP decomposition, $\mathcal{X}$ is the recovered tensor used in next iteration for decomposition, and $\Omega^c=\left\{(i_1,\cdots,i_n):1\leq i_k\leq d_k \right\}\setminus\Omega$. For each iteration, we  usually 
estimate the matrices $A_k$ using the alternating least squares optimization method (see e.g., \cite{BR1997,KTB1999,TB2005}).
\vspace{0.1pt}
\subsubsection{HOSVD-Based Method for Tensor Completion} 
The Tucker decomposition was proposed by Tucker \cite{tucker1966} and further developed \mbox{in \cite{DDV2000,KD1980}.}
\begin{definition}[Tucker decomposition]
Given an $n$-order tensor $\mathcal{X}$, its Tucker decomposition is defined as an approximation of a  core tensor $\mathcal{C}\in\mathbb{R}^{r_1\times\cdots \times r_n}$ multiplied by $n$ factor matrices $A_k\in\mathbb{R}^{d_k\times r_k}$%(whose columns are usually orthonormal)
, $k=1,\cdots,n$ along each mode, such that
\[\mathcal{X}\approx\mathcal{C}\times_1 A_1\times_2\cdots\times_n A_n=\llbracket \mathcal{C};A_1,\cdots,A_n\rrbracket,
\] where $r_k$ is a positive integer denoting an upper bound of the rank of the matrix $\mathcal{X}_{(k)}$.

If we unfold $\mathcal{X}$ along its $k$-th mode, we have
\[\mathcal{X}_{(k)}\approx A_{k}\mathcal{C}_{(k)}(A_1\kron \cdots \kron A_{k-1}\kron A_{k+1}\kron\cdots\kron A_{n})^T
\]
\end{definition}
Tucker decomposition is  a widely used tool for tensor completion. 
To implement  Tucker decomposition, one popular method is called the higher-order SVD \mbox{(HOSVD)  \cite{tucker1966}}.  The main idea of HOSVD is:
\begin{enumerate}
    \item Unfold $\mathcal{X}$ along mode $k$ to obtain matrix $\mathcal{X}_{(k)}$;
    \item Find the economic SVD decomposition of $\mathcal{X}_{(k)}={}^kU{}^k\Sigma {}^kV^T$;
    \item  Set $A_k$ to be the first $r_k$ columns of ${}^kU$;
    \item $\mathcal{C}=\mathcal{X}\times_1 A_1^T\times_2\cdots\times_n A_n^T$.
\end{enumerate}

If we want to find a Tucker rank $\boldsymbol{r}=[
r_1,\cdots,r_n]$ approximation for the tensor  $\mathcal{X}$ via HOSVD process, we just replace $A_k$ by the first $r_k$ columns of $U_k$.

\subsubsection{Tensor Completion Problem under Study}
In our setting, it is supposed that $\mathcal{T}$ is an unknown tensor  in $K_r\cap\beta \boldsymbol{B}_\infty$ or  $K_{\boldsymbol{r}}\cap \beta \boldsymbol{B}_\infty$. Fix a sampling pattern $\Omega\subseteq[d_1]\times\cdots\times[d_n]$ and the weight tensor $\mathcal{W}$.
%Then we would find   a rank-1 tensor $\mathcal{W}$ according to the sampling pattern $\Omega$. 
Our goal is  to design an algorithm that gives provable guarantees for a worst-case $\mathcal{T}$, even if it is adapted to $\Omega$. 

In our algorithm, the observed data are $\mathcal{T}_\Omega+\mathcal{Z}_\Omega=\boldsymbol{1}_{\Omega}\hadam \left(\mathcal{T}+\mathcal{Z}\right)$, where $\mathcal{Z}_{i_1\cdots i_n}\sim\mathcal{N}(0,\sigma^2)$ are i.i.d. Gaussian random variables. From the observations, the goal is to learn something about $\mathcal{T}$. In this paper, instead of measuring our recovered results with the underlying true tensor in a standard Frobenius norm $\|\mathcal{T}-\widehat{\mathcal{T}}\|_F$, we are interested in learning $\mathcal{T}$ using a \textit{weighted} Frobenius norm, i.e.,  to develop an efficient algorithm to find $\widehat{\mathcal{T}}$ so that 
\begin{equation*}
\left\|\mathcal{W}^{(1/2)}\hadam(\mathcal{T}-\widehat{\mathcal{T}})\right\|_F
\end{equation*} is as small as possible for some weight tensor $\mathcal{W}$. When measuring the weighted error, it is important to normalize appropriately to understand the meaning of the error bounds. In our results, we   always normalize the error bounds by $\left\|\mathcal{W}^{(1/2)}\right\|_F$. It is noteworthy that
\begin{eqnarray*}
\frac{\left\|\mathcal{W}^{(1/2)}\hadam(\mathcal{T}-\widehat{\mathcal{T}})\right\|_F}{\left\|\mathcal{W}^{(1/2)}\right\|_F}&=&\left( \sum_{i_1,\cdots,i_n}\frac{\mathcal{W}_{i_1\cdots i_n}}{\sum_{i_1,\cdots,i_n}\mathcal{W}_{i_1,\cdots,i_n}}(\mathcal{T}_{i_1\cdots i_n}-\widehat{\mathcal{T}}_{i_1\cdots i_n})^2\right)^{1/2},
\end{eqnarray*}
which gives a weighted average of the per entry squared error.  Generally, our problem can be formally stated below.

% \begin{table}[H]

% \label{algorithm iterative1}
% \setlength{\cellWidtha}{\columnwidth/1-1\tabcolsep+0in}
% \scalebox{1}[1]{\begin{tabularx}{\columnwidth}{>{\PreserveBackslash\raggedright}m{\cellWidtha}}
% \toprule
% 	\SetKwInOut{Problem}{Problem}

% 	\Problem{Weighted Universal Tensor Completion}
% 	{\bf Parameters:}
	
% 	{$\bullet$ Dimensions $d_1, \cdots, d_n$;}
	
% 	{$\bullet$ A sampling pattern $\Omega\subseteq [d_1]\times\cdots\times[d_n]$;}
	
% 	{$\bullet$ Parameters $\sigma, \beta>0$, $r$ or $\boldsymbol{r}=[r_1~\cdots~r_n]$;}
	
% 	{ $\bullet$ A rank-1 weight tensor $\mathcal{W}\in\mathbb{R}^{d_1\times \cdots\times d_n}$ so that $\mathcal{W}_{i_1\cdots i_n}>0$ for all $i_1,\cdots,i_n$;}
	
% 	{$\bullet$ A set $K$ (e.g., $K_r\cap\beta \boldsymbol{B}_\infty$ or $K_{\boldsymbol{r}}\cap\beta \boldsymbol{B}_\infty$).}
	
% {{\bf Goal:} Design an efficient algorithm $\mathcal{A}$ with the following guarantees:}
	
% 	{$\bullet$ $\mathcal{A}$ takes as input entries $\mathcal{T}_\Omega+\mathcal{Z}_\Omega$ so that $\mathcal{Z}_{i_1\cdots i_n}\sim \mathcal{N}(0,\sigma^2)$ are i.i.d.;}
	
% 	{$\bullet$ $\mathcal{A}$ runs in polynomial time;}
	
% 	{$\bullet$ With high probability over the choice of $\mathcal{Z}$, $\mathcal{A}$ returns an estimate $\widehat{\mathcal{T}}$ of $\mathcal{T}$ so that
% 	\[
% 	\frac{\left\|\mathcal{W}^{(1/2)}\hadam(\mathcal{T}-\widehat{\mathcal{T}}) \right\|_F}{\left\|\mathcal{W}^{(1/2)}\right\|_F}\leq\delta\]
% 	for all $\mathcal{T}\in K$, where $\delta$ depends on the problem parameters.}\\
%  \bottomrule
%     \end{tabularx}}
% \end{table}
\IncMargin{1em}
\begin{algorithm}[H]\label{algorithm iterative1}
	\SetKwInOut{Problem}{Problem}
	
	\Problem{Weighted Universal Tensor Completion}
	{\bf Parameters:}
	
	{$\bullet$ Dimensions $d_1, \cdots, d_n$;}
	
	{$\bullet$ A sampling pattern $\Omega\subseteq [d_1]\times\cdots\times[d_n]$;}
	
	{$\bullet$ Parameters $\sigma, \beta>0$, $r$ or $\boldsymbol{r}=[r_1~\cdots~r_n]$;}
	
	{ $\bullet$ A rank-1 weight tensor $\mathcal{W}\in\mathbb{R}^{d_1\times \cdots\times d_n}$ so that $\mathcal{W}_{i_1\cdots i_n}>0$ for all $i_1,\cdots,i_n$;}
	
	{$\bullet$ A set $K$ (e.g., $K_r\cap\beta \boldsymbol{B}_\infty$ or $K_{\boldsymbol{r}}\cap\beta \boldsymbol{B}_\infty$).}
	
{{\bf Goal:} Design an efficient algorithm $\mathcal{A}$ with the following guarantees:}
	
	{$\bullet$ $\mathcal{A}$ takes as input entries $\mathcal{T}_\Omega+\mathcal{Z}_\Omega$ so that $\mathcal{Z}_{i_1\cdots i_n}\sim \mathcal{N}(0,\sigma^2)$ are i.i.d.;}
	
	{$\bullet$ $\mathcal{A}$ runs in polynomial time;}
	
	{$\bullet$ With high probability over the choice of $\mathcal{Z}$, $\mathcal{A}$ returns an estimate $\widehat{\mathcal{T}}$ of $\mathcal{T}$ so that
	\[
	\frac{\left\|\mathcal{W}^{(1/2)}\hadam(\mathcal{T}-\widehat{\mathcal{T}}) \right\|_F}{\left\|\mathcal{W}^{(1/2)}\right\|_F}\leq\delta\]
	for all $\mathcal{T}\in K$, where $\delta$ depends on the problem parameters.}
\end{algorithm}

%\vspace{-12pt}
 \begin{Remark}[Strictly positive $\mathcal{W}$] The requirement that $\mathcal{W}_{i_1\cdots i_n}$ is strictly greater than zero is a generic condition. In fact, if $\mathcal{W}_{i_1\cdots i_n}=0$ for some $(i_1,\cdots,i_n)$,   some mode $k$ with index $i_k$ of $\mathcal{W}$ is zero,  then we can reduce the problem to a smaller one by ignoring that mode $k$ with index $i_k$. 
 \end{Remark}
\section{Main  Results}\label{sec:results}
In this section, we state informal versions of our main results.  %Find the weight tensor $\mathcal{W}$ by solving the following optimization problem:
% \begin{eqnarray*}
% \mathcal{W}:=\underset{{\mathcal{X}\succ 0, \text{rank}(\mathcal{X})=1}}{\argmin}\left\|\mathcal{X}-\boldsymbol{1}_{\Omega}\right\|_F.
% \end{eqnarray*}
% This can be solved by using the existing non-negative rank-1 CP decomposition method.
%After we have $\mathcal{W}$, we can find  $\widehat{\mathcal{T}}$ by solving the following optimization problem:
With  fixed sampling pattern $\Omega$ and weight tensor $\mathcal{W}$, we can find  $\widehat{\mathcal{T}}$ by solving the following \mbox{optimization problem}:
\begin{equation}\label{eqn:obj}
    \widehat{\mathcal{T}}=\mathcal{W}^{(-1/2)}\hadam\underset{{\rank(\mathcal{X})=r}}{\argmin}\left\|\mathcal{X}-\mathcal{W}^{(-1/2)}\hadam\mathcal{Y}_\Omega \right\|_F,
\end{equation}
or
\begin{equation}\label{eqn:obj1}
    \widehat{\mathcal{T}}=\mathcal{W}^{(-1/2)}\hadam\underset{{\text{Tucker-}\rank(\mathcal{X})=\boldsymbol{r}}}{\argmin}\left\|\mathcal{X}-\mathcal{W}^{(-1/2)}\hadam\mathcal{Y}_\Omega \right\|_F,
\end{equation}
where $\mathcal{Y}_\Omega\in\mathbb{R}^{d_1\times\cdots\times d_n}$ with $$\mathcal{Y}_\Omega(i_1,\cdots,i_n)=\begin{cases}
    \mathcal{T}_{i_1\cdots i_n}+\mathcal{Z}_{i_1\cdots i_n}      & \quad \text{if } (i_1,\cdots,i_n)\in\Omega \\
   0 & \quad \text{if } (i_1,\cdots,i_n)\not\in\Omega
  \end{cases}
.$$ 

It is known that solving  \eqref{eqn:obj} is NP-hard. However, there are some polynomial time algorithms   % which give 'good' approximations to the actual solution of \eqref{eqn:obj}. 
 {to find approximate solutions for \eqref{eqn:obj} such that the approximation is (empirically) close to the actual solution of \eqref{eqn:obj} in terms of the Frobenius norm}.
In our numerical experiments, we solve $\eqref{eqn:obj}$ via the   CP-ALS algorithm \cite{CC1970}.
To solve \eqref{eqn:obj1}, we use  the HOSVD process \cite{DDV2000}. Assume that $\mathcal{T}$ has Tucker rank $\boldsymbol{r}=[r_1,\cdots,r_n]$.  Let
\[\widehat{A}_i=\underset{\rank(A)=r_i}{\argmin}\left\|A-(\mathcal{W}^{(-1/2)}\hadam \mathcal{Y}_\Omega)_{(i)}\right\|_2
\]
and set $\widehat{U}_i$ to be the left singular vector matrix of $\widehat{A}_i$. Then the estimated tensor is of \mbox{the form} \[\widehat{\mathcal{T}}=\mathcal{W}^{(-1/2)}\hadam((\mathcal{W}^{(-1/2)}\hadam\mathcal{Y}_\Omega)\times_1 \widehat{U}_1\widehat{U}_1^T\times_2 \cdots\times_n \widehat{U}_n\widehat{U}_n^T.\]

 In the following, we call this the weighted HOSVD algorithm.
\subsection{General Upper Bound} 
Suppose that the optimal solution $\widehat{\mathcal{T}}$ for \eqref{eqn:obj} or \eqref{eqn:obj1} $\widehat{\mathcal{T}}$ can be found, we would like to give an upper bound estimations for $\|\mathcal{W}^{(1/2)}\hadam(\mathcal{T}-\widehat{\mathcal{T}})\|_F$ with some proper weight tensor $\mathcal{W}$.
\begin{theorem}\label{thm:gub tensor}
Let $\mathcal{W}=\boldsymbol{w}_1\out\cdots\out\boldsymbol{w}_n \in\mathbb{R}^{d_1\times \cdots\times d_n}$ have strictly positive entries, and fix $\Omega\subseteq[d_1]\times\cdots\times[d_n]$. Suppose that $\mathcal{T}\in\mathbb{R}^{d_1\times \cdots\times d_n}$ has rank $r$ for problem \eqref{eqn:obj} or Tucker  rank $\boldsymbol{r}=[r_1, \cdots,r_n]$ for problem \eqref{eqn:obj1}, and let $\widehat{\mathcal{T}}$ be the optimal solutions for \eqref{eqn:obj} or \eqref{eqn:obj1}. Suppose that $\mathcal{Z}_{i_1\cdots i_n}\sim\mathcal{N}(0,\sigma^2)$.
Then with %high
probability at least $1-2^{-|\Omega|/2}$ 
over the choice of $\mathcal{Z}$,
\[\left\|\mathcal{W}^{(1/2)}\hadam(\mathcal{T}-\widehat{\mathcal{T}})\right\|_F\leq 2\left\|\mathcal{T}\right\|_{\infty}\left\|\mathcal{W}^{(1/2)}-\mathcal{W}^{(-1/2)}\hadam \boldsymbol{1}_{\Omega}\right\|_F+4\sigma\mu\sqrt{|\Omega|\log(2)},
\]
 {Recall here, $(\mathcal{W}^{(1/2)})_{i_1\cdots i_n} =\mathcal{W}_{i_1\cdots i_n}^{(1/2)}$ and $(\mathcal{W}^{(-1/2)})_{i_1\cdots i_n} =\mathcal{W}_{i_1\cdots i_n}^{(-1/2)}$as defined in \mbox{Section \ref{sec: notation}} and } $\mu^2=\max_{(i_1,\cdots,i_n)\in\Omega}\frac{1}{\mathcal{W}_{i_1\cdots i_n}}$.
\end{theorem}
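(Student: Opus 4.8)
The plan is to exploit the optimality of $\widehat{\mathcal{T}}$ against a single well-chosen competitor, namely the rescaled ground truth $\mathcal{W}^{(1/2)}\hadam\mathcal{T}$, and then to split the resulting residual into a deterministic ``bias'' term controlled by the mismatch $\mathcal{W}^{(1/2)}-\mathcal{W}^{(-1/2)}\hadam\boldsymbol{1}_\Omega$ and a stochastic ``noise'' term controlled by a chi-squared tail bound.

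First I would record the structural fact that makes the whole argument work: since $\mathcal{W}=\boldsymbol{w}_1\out\cdots\out\boldsymbol{w}_n$ is rank one with strictly positive entries, so is $\mathcal{W}^{(1/2)}$, and Hadamard multiplication by a strictly positive rank-one tensor preserves both CP rank and Tucker rank (each mode-$k$ unfolding gets multiplied left and right by invertible diagonal matrices, leaving its matrix rank unchanged; for CP rank one absorbs the factors $\boldsymbol{w}_k^{(1/2)}$ into each rank-one summand). Hence $\mathcal{W}^{(1/2)}\hadam\mathcal{T}$ has the same CP rank $r$ (resp.\ Tucker rank $\boldsymbol{r}$) as $\mathcal{T}$ and is a feasible point of the minimization defining $\widehat{\mathcal{T}}$. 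Writing $\mathcal{X}^\ast$ for the minimizer in \eqref{eqn:obj}/\eqref{eqn:obj1}, so that $\widehat{\mathcal{T}}=\mathcal{W}^{(-1/2)}\hadam\mathcal{X}^\ast$ and hence $\mathcal{W}^{(1/2)}\hadam\widehat{\mathcal{T}}=\mathcal{X}^\ast$ (because $\mathcal{W}^{(1/2)}\hadam\mathcal{W}^{(-1/2)}$ is the all-ones tensor), and setting $\mathcal{G}:=\mathcal{W}^{(-1/2)}\hadam\mathcal{Y}_\Omega$, optimality gives $\norm{\mathcal{X}^\ast-\mathcal{G}}_F\le\norm{\mathcal{W}^{(1/2)}\hadam\mathcal{T}-\mathcal{G}}_F$, and a triangle inequality then yields the factor-two bound
\[
\norm{\mathcal{W}^{(1/2)}\hadam(\mathcal{T}-\widehat{\mathcal{T}})}_F=\norm{\mathcal{W}^{(1/2)}\hadam\mathcal{T}-\mathcal{X}^\ast}_F\le 2\norm{\mathcal{W}^{(1/2)}\hadam\mathcal{T}-\mathcal{G}}_F .
\]

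Next I would expand the residual using $\mathcal{Y}_\Omega=\boldsymbol{1}_\Omega\hadam(\mathcal{T}+\mathcal{Z})$, giving
\[
\mathcal{W}^{(1/2)}\hadam\mathcal{T}-\mathcal{G}=\bigl(\mathcal{W}^{(1/2)}-\mathcal{W}^{(-1/2)}\hadam\boldsymbol{1}_\Omega\bigr)\hadam\mathcal{T}-\mathcal{W}^{(-1/2)}\hadam\boldsymbol{1}_\Omega\hadam\mathcal{Z},
\]
and bound the two pieces separately by the triangle inequality. The deterministic piece is at most $\norm{\mathcal{T}}_\infty\,\norm{\mathcal{W}^{(1/2)}-\mathcal{W}^{(-1/2)}\hadam\boldsymbol{1}_\Omega}_F$, via the elementary estimate $\norm{\mathcal{A}\hadam\mathcal{B}}_F\le\norm{\mathcal{A}}_\infty\norm{\mathcal{B}}_F$. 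The stochastic piece has squared Frobenius norm $\sum_{(i_1,\dots,i_n)\in\Omega}\mathcal{W}_{i_1\cdots i_n}^{-1}\mathcal{Z}_{i_1\cdots i_n}^2\le\sigma^2\mu^2\sum_{(i_1,\dots,i_n)\in\Omega}g_{i_1\cdots i_n}^2$, where $g_{i_1\cdots i_n}=\mathcal{Z}_{i_1\cdots i_n}/\sigma$ are i.i.d.\ standard Gaussians and $\mu^2=\max_{\Omega}\mathcal{W}_{i_1\cdots i_n}^{-1}$, so the sum is a $\chi^2$ variable with $\abs{\Omega}$ degrees of freedom.

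The main quantitative work — and the step I expect to be the crux — is the chi-squared tail bound showing that this noise term is at most $2\sigma\mu\sqrt{\abs{\Omega}\log 2}$ except with probability $2^{-\abs{\Omega}/2}$. I would obtain it from a Chernoff bound: with $Q=\sum_{i=1}^{\abs{\Omega}}g_i^2$ and $0<\lambda<1/2$ one has $\Prob[Q>t]\le e^{-\lambda t}(1-2\lambda)^{-\abs{\Omega}/2}$; choosing $t=4\abs{\Omega}\log 2$ and $\lambda=1/4$ makes $e^{-\lambda t}(1-2\lambda)^{-1/2}=2^{-1/2}$ exactly, so $\Prob[Q>4\abs{\Omega}\log 2]\le 2^{-\abs{\Omega}/2}$. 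On the complementary event the noise term is $\le\sigma\mu\sqrt{4\abs{\Omega}\log 2}=2\sigma\mu\sqrt{\abs{\Omega}\log 2}$, and feeding the two bounds back through the factor-two inequality produces precisely $2\norm{\mathcal{T}}_\infty\norm{\mathcal{W}^{(1/2)}-\mathcal{W}^{(-1/2)}\hadam\boldsymbol{1}_\Omega}_F+4\sigma\mu\sqrt{\abs{\Omega}\log 2}$, as claimed. Apart from the concentration estimate, the only genuinely nontrivial point is the rank-preservation claim for $\mathcal{W}^{(1/2)}\hadam\mathcal{T}$, which is exactly where the rank-one hypothesis on $\mathcal{W}$ (and its strict positivity) is indispensable.
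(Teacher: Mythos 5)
Your proposal is correct and follows essentially the same route as the paper: optimality of $\widehat{\mathcal{T}}$ against the feasible competitor $\mathcal{W}^{(1/2)}\hadam\mathcal{T}$ plus the triangle inequality gives the factor-two reduction, the residual splits into the bias term bounded by $\|\mathcal{T}\|_\infty\|\mathcal{W}^{(1/2)}-\mathcal{W}^{(-1/2)}\hadam\boldsymbol{1}_\Omega\|_F$ and the noise term bounded by a Chernoff/MGF chi-squared tail, with identical constants and failure probability $2^{-|\Omega|/2}$ (your pulling $\mu^2$ out before applying the $\chi^2$ bound is equivalent to the paper's bounding each MGF factor by $\sqrt{2}$ at $s=1/(4\sigma^2\mu^2)$). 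Your explicit justification that Hadamard multiplication by the strictly positive rank-one tensor $\mathcal{W}^{(1/2)}$ preserves CP and Tucker rank is a welcome addition, since the paper uses this feasibility fact only implicitly.
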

Notice that the upper bound in Theorem \ref{thm:gub tensor} is for the optimal output $\widehat{\mathcal{T}}$ for problems \eqref{eqn:obj} and \eqref{eqn:obj1}, which is general. However, the upper bound in Theorem \ref{thm:gub tensor} contains no   rank information of the underlying tensor $\mathcal{T}$. To introduce the rank information of the underlying tensor $\mathcal{T}$, we restrict our analysis for Problem \eqref{eqn:obj1} by considering the  HOSVD process in \mbox{the sequel.}

\subsection{Results for Weighted HOSVD Algorithm} 
In this section, we begin by giving a general upper bound  for the weighted\linebreak \mbox{HOSVD algorithm}.
\subsubsection{General Upper Bound for Weighted HOSVD}
\begin{theorem}[Informal, see Theorem \ref{thm:ub low-rank tensor}]\label{thm: gubwHOSVD}
%[General upper bound for  tensors with Tucker rank $\boldsymbol{r}$]
Let $\mathcal{W}=\boldsymbol{w_1}\out\cdots\out \boldsymbol{w_n}\in\mathbb{R}^{d_1\times\cdots \times d_n}$ have strictly positive entries, and fix $\Omega\subseteq[d_1]\times\cdots\times[d_n]$. Suppose that $\mathcal{T}\in\mathbb{R}^{d_1\times \cdots\times d_n}$ has Tucker rank $\textbf{r}=[r_1,\cdots,r_n]$. Suppose that $\mathcal{Z}_{i_1\cdots i_n}\sim\mathcal{N}(0,\sigma^2)$ and let $\widehat{\mathcal{T}}$ be the estimate of the solution of   \eqref{eqn:obj1} via
% \[\widehat{\mathcal{T}}=\mathcal{W}^{(-1/2)}\hadam((\mathcal{W}^{(-1/2)}\hadam\mathcal{Y}_\Omega)\times_1 \widehat{U}_1\widehat{U}_1^T\times_2 \cdots\times_n \widehat{U}_n\widehat{U}_n^T),
% \]
% where $\widehat{U}_1, \cdots, \widehat{U}_n$ are obtained by considering
the HOSVD   process.
Then %at least
% \[\prod\limits_{i=1}^{n}\left(1-\frac{1}{d_i+\prod\limits_{j\neq i}d_j}\right)
% \]
\begin{multline*}\label{uppb}
    \left\|\mathcal{W}^{(1/2)}\hadam(\mathcal{T}-\widehat{\mathcal{T}})\right\|_F\lesssim \left(\sum_{k=1}^n \sqrt{r_k\log(d_k+\prod\limits_{j\neq k}d_j)}\mu_k\right)\sigma\\
   +\left(\sum_{k=1}^{n}r_k\left\|(\mathcal{W}^{(-1/2)}\hadam \boldsymbol{1}_{\Omega}-\mathcal{W}^{(1/2)})_{(k)}\right\|_2 \right)\left\|\mathcal{T}\right\|_\infty,
\end{multline*} with high probability over the choice of $\mathcal{Z}$,
where 
%\[\mu_k^2=\max_{i_1,\cdots,i_n}\left\{\sum_{i_1,\cdots,i_{k-1},i_{k+1},\cdots,i_n}\frac{1_{(i_1,i_2,\cdots,i_n)\in\Omega}}{\mathcal{W}_{i_1i_2\cdots i_{n}}},\sum_{i_k}\frac{1_{(i_1,i_2,\cdots,i_n)\in\Omega}}{\mathcal{W}_{i_1i_2\cdots i_{n}}} \right\}\]
 \[\mu_k^2=\max\left\{\max_{i_k}\left(\sum_{i_1,\cdots,i_{k-1},i_{k+1},\cdots,i_n}\frac{1_{(i_1,i_2,\cdots,i_n)\in\Omega}}{\mathcal{W}_{i_1i_2\cdots i_{n}}}\right),\max_{i_1,\cdots,i_{k-1},i_{k+1},\cdots,i_n}\left(\sum_{i_k}\frac{1_{(i_1,i_2,\cdots,i_n)\in\Omega}}{\mathcal{W}_{i_1i_2\cdots i_{n}}}\right) \right\}.\]
and $a\lesssim b$   means that $a\leq cb$ for some universal constant $c>0$.
\end{theorem}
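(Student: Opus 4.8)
The plan is to recast the weighted error as a Tucker-projection error for an auxiliary low-rank tensor, and then reduce everything to mode-wise spectral-norm estimates. Write $\mathcal{A}=\mathcal{W}^{(-1/2)}\hadam\mathcal{Y}_\Omega$ for the reweighted data and let $\Pi(\mathcal{X})=\mathcal{X}\times_1 P_1\times_2\cdots\times_n P_n$ denote the HOSVD projection, where $P_k=\widehat{U}_k\widehat{U}_k^T$ is the orthogonal projector onto the top-$r_k$ left singular subspace of $\mathcal{A}_{(k)}$. By construction $\widehat{\mathcal{T}}=\mathcal{W}^{(-1/2)}\hadam\Pi(\mathcal{A})$, and since $\mathcal{W}^{(1/2)}\hadam\mathcal{W}^{(-1/2)}=\boldsymbol{1}$ we obtain the clean identity $\mathcal{W}^{(1/2)}\hadam(\mathcal{T}-\widehat{\mathcal{T}})=\mathcal{B}-\Pi(\mathcal{A})$, where $\mathcal{B}:=\mathcal{W}^{(1/2)}\hadam\mathcal{T}$. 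The first key observation is that $\mathcal{B}$ still has Tucker rank $\boldsymbol{r}$: because $\mathcal{W}^{(1/2)}=\boldsymbol{w_1}^{(1/2)}\out\cdots\out\boldsymbol{w_n}^{(1/2)}$ is separable with strictly positive entries, Hadamard multiplication by $\mathcal{W}^{(1/2)}$ is exactly mode-wise multiplication by the invertible diagonal matrices $D_k=\mathrm{diag}(\boldsymbol{w_k}^{(1/2)})$, so $\rank(\mathcal{B}_{(k)})=\rank(\mathcal{T}_{(k)})=r_k$. Finally I split $\mathcal{A}=\mathcal{B}+\mathcal{E}+\mathcal{N}$ into the signal, the deterministic weighting mismatch $\mathcal{E}=(\mathcal{W}^{(-1/2)}\hadam\boldsymbol{1}_\Omega-\mathcal{W}^{(1/2)})\hadam\mathcal{T}$, and the reweighted noise $\mathcal{N}=\mathcal{W}^{(-1/2)}\hadam\boldsymbol{1}_\Omega\hadam\mathcal{Z}$.

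Next I reduce the error to the mode-$k$ spectral norms of $\mathcal{E}$ and $\mathcal{N}$. Decomposing $\mathcal{B}-\Pi(\mathcal{A})=(\mathcal{B}-\Pi(\mathcal{B}))-\Pi(\mathcal{E}+\mathcal{N})$ and invoking the HOSVD quasi-optimality telescoping (each intermediate mode projection is a contraction in $\|\cdot\|_F$) gives $\|\mathcal{B}-\Pi(\mathcal{B})\|_F\le\sum_{k=1}^n\|(I-P_k)\mathcal{B}_{(k)}\|_F$. Since $\mathcal{B}_{(k)}$ has rank $r_k$, so does $(I-P_k)\mathcal{B}_{(k)}$, whence $\|(I-P_k)\mathcal{B}_{(k)}\|_F\le\sqrt{r_k}\,\|(I-P_k)\mathcal{B}_{(k)}\|_2$. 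The crucial point is that $P_k\mathcal{A}_{(k)}$ is the best rank-$r_k$ approximation of $\mathcal{A}_{(k)}$, so by Eckart--Young and $\rank(\mathcal{B}_{(k)})=r_k$ one has $\|(I-P_k)\mathcal{A}_{(k)}\|_2\le\|(\mathcal{E}+\mathcal{N})_{(k)}\|_2$, and then $\|(I-P_k)\mathcal{B}_{(k)}\|_2\le 2\|(\mathcal{E}+\mathcal{N})_{(k)}\|_2$ by a triangle inequality; note this requires no spectral-gap/Wedin hypothesis. The projected perturbation is handled the same way, $\|\Pi(\mathcal{E}+\mathcal{N})\|_F\le\sqrt{r_1}\,\|(\mathcal{E}+\mathcal{N})_{(1)}\|_2$, so after a triangle inequality $\|(\mathcal{E}+\mathcal{N})_{(k)}\|_2\le\|\mathcal{E}_{(k)}\|_2+\|\mathcal{N}_{(k)}\|_2$ everything collapses to
\[
\left\|\mathcal{B}-\Pi(\mathcal{A})\right\|_F\ \lesssim\ \sum_{k=1}^n\sqrt{r_k}\left(\|\mathcal{E}_{(k)}\|_2+\|\mathcal{N}_{(k)}\|_2\right).
\]

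It then remains to estimate the two families of mode-$k$ spectral norms. For the noise, $\mathcal{N}_{(k)}$ is a $d_k\times\prod_{j\neq k}d_j$ matrix with independent Gaussian entries of variance $\sigma^2\,\boldsymbol{1}_\Omega\hadam\mathcal{W}^{(-1)}$; its maximal row and column variance sums are precisely $\sigma^2\mu_k^2$, so a concentration bound for the spectral norm of a Gaussian matrix with a heteroscedastic variance profile yields $\|\mathcal{N}_{(k)}\|_2\lesssim\sigma\mu_k\sqrt{\log(d_k+\prod_{j\neq k}d_j)}$ with high probability, and multiplying by $\sqrt{r_k}$ reproduces the first (noise) term. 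For the deterministic term, $\mathcal{E}_{(k)}=C_{(k)}\hadam\mathcal{T}_{(k)}$ with $C=\mathcal{W}^{(-1/2)}\hadam\boldsymbol{1}_\Omega-\mathcal{W}^{(1/2)}$; using a Hadamard-product spectral-norm estimate that exploits $\rank(\mathcal{T}_{(k)})=r_k$ together with the entrywise bound $\max_{i,j}|(\mathcal{T}_{(k)})_{ij}|\le\|\mathcal{T}\|_\infty$ gives $\|\mathcal{E}_{(k)}\|_2\lesssim\sqrt{r_k}\,\|C_{(k)}\|_2\,\|\mathcal{T}\|_\infty$, and the extra $\sqrt{r_k}$ combines with the prefactor to give the $r_k\|C_{(k)}\|_2\|\mathcal{T}\|_\infty$ term.

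The main obstacle is the noise analysis, for two reasons. First, the projectors $P_k$ are themselves functions of $\mathcal{N}$, so $\Pi(\mathcal{N})$ projects onto a subspace that is correlated with the noise; the fix is precisely the passage to $\|\mathcal{N}_{(k)}\|_2$ above, an object independent of the (random) choice of $P_k$, which is also why the rank enters only as $\sqrt{r_k}$. Second, and most delicate, is obtaining the sharp $\mu_k\sqrt{\log(\cdots)}$ control on $\|\mathcal{N}_{(k)}\|_2$: because $\Omega$ and the weights $\mathcal{W}$ are arbitrary, the Gaussian matrix $\mathcal{N}_{(k)}$ is highly non-uniform, and one must show (e.g. via an $\varepsilon$-net over both spheres, a union bound, and Gaussian tail estimates in which the per-row and per-column variances are controlled by $\mu_k^2$) that the spectral norm is governed by the maximal row/column variance rather than by the ambient dimension. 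The accompanying Hadamard spectral-norm lemma for the deterministic term is routine by comparison, but must be set up so that only $\|\mathcal{T}\|_\infty$ and the rank $r_k$, not $\|\mathcal{T}\|_F$, appear.
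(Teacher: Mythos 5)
Your proposal is correct and follows essentially the same route as the paper's proof of Theorem \ref{thm:ub low-rank tensor}: the same splitting of the reweighted data into signal $\mathcal{B}=\mathcal{W}^{(1/2)}\hadam\mathcal{T}$ (which keeps Tucker rank $\boldsymbol{r}$ because $\mathcal{W}$ is rank one), weighting mismatch $\mathcal{E}$, and reweighted noise $\mathcal{N}$; the same mode-wise telescoping with the rank-based $\sqrt{r_k}$ Frobenius-to-spectral conversion and Eckart--Young quasi-optimality of the truncated SVD (no spectral-gap assumption), giving the identical reduction $\|\mathcal{W}^{(1/2)}\hadam(\mathcal{T}-\widehat{\mathcal{T}})\|_F\lesssim\sum_{k}\sqrt{r_k}\left(\|\mathcal{E}_{(k)}\|_2+\|\mathcal{N}_{(k)}\|_2\right)$; the same max-norm Hadamard estimate $\|\mathcal{E}_{(k)}\|_2\leq\sqrt{r_k}\|\mathcal{T}\|_\infty\|(\mathcal{W}^{(-1/2)}\hadam\boldsymbol{1}_\Omega-\mathcal{W}^{(1/2)})_{(k)}\|_2$; and the same variance parameters $\mu_k$ for the noise. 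The one caveat is your parenthetical suggestion to prove the noise bound by an $\varepsilon$-net: a plain net-plus-union-bound argument controls the variance of $u^T\mathcal{N}_{(k)}v$ only through the maximal \emph{entrywise} variance and thus yields a bound of order $\sigma\left(\max_{(i_1,\cdots,i_n)\in\Omega}\mathcal{W}_{i_1\cdots i_n}^{-1/2}\right)\sqrt{d_k+\prod_{j\neq k}d_j}$ rather than $\sigma\mu_k\sqrt{\log\left(d_k+\prod_{j\neq k}d_j\right)}$; the row/column variance-sum form you need is exactly what the matrix Gaussian series (matrix Bernstein) inequality of Tropp, which the paper invokes, provides.
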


\begin{Remark}
 {The upper bound in \cite{foucart2019weighted} suggests $\|W^{(1/2)}\hadam(M-\widehat{M})\|_F\leq 2\sqrt{2}r\lambda\|M\|_{\infty}$\linebreak$+4\sqrt{2}\sigma\mu_1\sqrt{r\log(d_1+d_2)} $, where $\lambda = \|W^{(1/2)} - W^{(-1/2)} \circ \mathbf{1}_{\Omega}\|$ and $\mu_1^2 = max_{(i,j)\in\Omega} \frac{1}{W_{ij}}$, where $\hat{M}$ is obtained by considering the truncated SVD decompositions. Notice that in our result, when $n=2$, the upper bound becomes $2\sqrt{r\log(d_1+d_2)}\mu\sigma + 2r\|W^{(1/2)}-W^{(-1/2)}\circ\mathbf{1}_{\Omega}\|\|M\|_{\infty}$ with $\mu^2=\max\{\|\mathbf{1}_{\Omega}\circ W^{(-1)}\|_{\infty},\|\mathbf{1}_{\Omega}\circ W^{(-1)}\|_{1}\}$. Since  $\mu$ in our work is much bigger than the $\mu_1$ in \cite{foucart2019weighted}, the bound in our work is weaker than the one in \cite{foucart2019weighted}. The reason is that in order to obtain a general bound for all tensor, the fact that  the optimal approximations $\hat{M}$ for a given matrix in the spectral norm and Frobenious norm are the same cannot be applied.
}
\end{Remark}

\subsubsection{Case Study: When \texorpdfstring{$\Omega\sim\mathcal{W}$}{Lg}} 
 To understand the bounds mentioned above, we also study the case when $\Omega\sim\mathcal{W}$ such that $\|(\mathcal{W}^{(1/2)}-\mathcal{W}^{(-1/2)}\hadam \boldsymbol{1}_{\Omega})_{(k)}\|_2$ is small for $k=1,\cdots,n$. Even though the samples are taken randomly in this case, our goal is to understand our upper bounds for deterministic sampling pattern $\Omega$. To make sure that $\|(\mathcal{W}^{(1/2)}-\mathcal{W}^{(-1/2)}\hadam \boldsymbol{1}_{\Omega})_{(k)}\|_2$ is  small, we need to assume that each entry of $\mathcal{W}$ is not too small.  For this case, we have the following main results.
 
 \begin{theorem}[Informal, see Theorems \ref{thm: omgsw} and \ref{thm: lb}]\label{thm:bfsomega}
Let $\mathcal W=\boldsymbol{w_1}\out \cdots \out \boldsymbol{w_n}\in\mathbb{R}^{d_1\times \cdots\times d_n}$ be a CP rank-1 tensor so that for all $(i_1,\cdots,i_n)\in[d_1]\times\cdots\times[d_n]$ we have $\mathcal W_{i_1\cdots i_n}\in[\frac{1}{\sqrt{d_1\cdots d_n}},1]$. Suppose that $\Omega\sim\mathcal{W}$. %we choose each $(i_1,\cdots,i_n)\in[d_1]\times\cdots\times[d_n]$ independently with probability $\mathcal{W}_{i_1\cdots i_n}$ to form a set $\Omega\subseteq[d_1]\times\cdots\times[d_n]$. 
\begin{enumerate}
    \item [$\bullet$]\textbf{Upper bound:}
%Then with probability at least $$\left(1-2\exp\left(-\frac{3}{104}\sqrt{\prod\limits_{j=1}^nd_j}\right)\right)\prod\limits_{k=1}^n\left(1-\frac{1}{d_k+\prod\limits_{j\neq k}d_j}\right)^2$$ over the choice of $\Omega$,
Then the following holds with high probability.\\
For our weighted HOSVD algorithm $\mathcal{A}$, for any Tucker $\text{rank-}\boldsymbol{r}$ tensor $\mathcal{T}$ with $\left\|\mathcal{T}\right\|_\infty\leq\beta$, $\mathcal{A}$ returns $\widehat{\mathcal{T}}=\mathcal{A}(\mathcal{T}_\Omega+\mathcal{Z}_\Omega)$ so that with high probability %with probability at least $\prod\limits_{k=1}^n\left(1-\frac{1}{d_k+\prod\limits_{j\neq k}d_j}\right)$ 
over the choice of $\mathcal{Z}$,
\begin{align*}
\frac{\left\|\mathcal{W}^{(1/2)}\hadam(\mathcal{T}-\widehat{\mathcal{T}})\right\|_F}{\left\|\mathcal{W}^{(1/2)}\right\|_F}
   &\lesssim\frac{1}{\sqrt{|\Omega|}}\left(\beta n^2rd^{\frac{n-1}{2}}\log(d)+\sigma n^2r^{1/2}d^{\frac{n-1}{2}} \right),
\end{align*}
where $r=\max_{k}\{r_{k}\}$ and $d=\max_{k}\{d_k\}$.
% \begin{multline*}
% \frac{\left\|\mathcal{W}^{(1/2)}\hadam(\mathcal{T}-\widehat{\mathcal{T}})\right\|_F}{\left\|\mathcal{W}^{(1/2)}\right\|_F}
%   \lesssim\frac{\beta}{\sqrt{|\Omega|}}\left(\sum_{k=1}^n r_k\sqrt{d_k+\prod\limits_{j\neq k}d_j}\log(d_k+\prod\limits_{j\neq k}d_j)\right)\\
%   +\frac{\sigma}{|\Omega|}\left(\sum_{k=1}^n\sqrt{r_k(d_k+\prod\limits_{j\neq k}d_j)}\log(d_k+\prod\limits_{j\neq k}d_j)\right).
% \end{multline*}
\item [$\bullet$] \textbf{Lower bound: } If additionally, $\mathcal{W}$ is flat (the entries of $\mathcal{W}$ are close), then for our weighted HOSVD algorithm $\mathcal{A}$, there exists some $\mathcal{T}\in K_{\boldsymbol{r}}\cap\beta\mathbf{B}_{\infty}$  so that with probability at least $\frac{1}{2}$ over the choice of $\mathcal{Z}$, \begin{eqnarray*}
&&\frac{\left\|\mathcal{W}^{(1/2)}\hadam(\mathcal{A}(\mathcal{T}_\Omega+\mathcal{Z}_\Omega)-\mathcal{T})\right\|_F}{\left\|\mathcal{W}^{(1/2)}\right\|_F}\\
&\gtrsim& \min\left\{\frac{\sigma}{\sqrt{|\Omega|}}\left(\frac{\tilde{r}\tilde{d}}{\tilde{d}+2C'^2\tilde{r}}\right)^{\frac{n}{2}},\frac{\sigma}{\sqrt{|\Omega|}}\left(\frac{\tilde{r}\tilde{d}}{\left(\sqrt{\tilde{d}}+\sqrt{2\tilde{r}\log(\tilde{r})}C'\right)^2}\right)^{\frac{n}{2}},\frac{\beta}{\sqrt{n\log(\tilde{d})}}\right\},
\end{eqnarray*}
% \begin{eqnarray*}
% &&\frac{\left\|\mathcal{W}^{(1/2)}\hadam(\mathcal{A}(\mathcal{T}_\Omega+\mathcal{Z}_\Omega)-\mathcal{T})\right\|_F}{\left\|\mathcal{W}^{(1/2)}\right\|_F}\\
% &\gtrsim& \min\left\{\frac{\sigma}{\sqrt{|\Omega|}}\sqrt{\prod\limits_{k=1}^n\frac{r_kd_k}{d_k+2C'^2r_k}},\frac{\sigma }{\sqrt{|\Omega|}}\sqrt{\prod\limits_{k=1}^n\frac{r_kd_k}{d_k+2C'\sqrt{r_k\log(r_k)}+2C' r_k\log(r_k)}},\right.\\
% &&\left.\beta\Bigg/\sqrt{\log(\prod\limits_{k=1}^nd_k)}\right\},
% \end{eqnarray*}
where  $\tilde{r}=\min_{k}\{r_k\}$,  $\tilde{d}=\min_{k}\{d_k\}$, and $C'$ is some constant to measure the ``flatness" \mbox{of $\mathcal{W}$}.
\end{enumerate}
\end{theorem}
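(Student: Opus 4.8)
The plan is to treat the two halves of the theorem separately, since the upper bound holds for the random model $\Omega\sim\mathcal{W}$ as a direct specialization of the deterministic guarantee already established in Theorem \ref{thm: gubwHOSVD}, whereas the lower bound must be argued directly for the weighted HOSVD map $\mathcal{A}$. For the upper bound I would start from the estimate of Theorem \ref{thm: gubwHOSVD} and control, under $\Omega\sim\mathcal{W}$, the three data-dependent quantities appearing there: the incoherence parameters $\mu_k$, the per-mode spectral norms $\|(\mathcal{W}^{(-1/2)}\hadam\boldsymbol{1}_\Omega-\mathcal{W}^{(1/2)})_{(k)}\|_2$, and the normalizing factor $\|\mathcal{W}^{(1/2)}\|_F$. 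Each is a function of the independent Bernoulli inclusions $\mathbf 1_{(i_1,\cdots,i_n)\in\Omega}$, so the whole argument reduces to a handful of concentration estimates.

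Concretely, since $\Omega\sim\mathcal{W}$ gives $\E\!\left[\mathbf 1_{(i_1,\cdots,i_n)\in\Omega}/\mathcal{W}_{i_1\cdots i_n}\right]=1$, every row-sum and column-sum inside $\mu_k^2$ has expectation $\prod_{j\neq k}d_j$ or $d_k$; because each summand is bounded by $1/\mathcal{W}_{i_1\cdots i_n}\le\sqrt{d_1\cdots d_n}$, a Bernstein bound with a union bound over the relevant sums yields $\mu_k^2\lesssim d^{n-1}$ with high probability. For the spectral norm I would observe that $E:=\mathcal{W}^{(-1/2)}\hadam\boldsymbol{1}_\Omega-\mathcal{W}^{(1/2)}$ has independent, mean-zero entries with variance $1-\mathcal{W}_{i_1\cdots i_n}\le 1$ and magnitude at most $(d_1\cdots d_n)^{1/4}$; applying matrix Bernstein (or the spectral-norm bound for random matrices with independent entries) to each unfolding $E_{(k)}$, whose larger dimension is $\prod_{j\neq k}d_j$, gives $\|E_{(k)}\|_2\lesssim d^{(n-1)/2}\log d$. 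Substituting these into Theorem \ref{thm: gubwHOSVD} and dividing by $\|\mathcal{W}^{(1/2)}\|_F$, which concentrates around $\sqrt{\E|\Omega|}=\sqrt{\sum\mathcal{W}_{i_1\cdots i_n}}\asymp\sqrt{|\Omega|}$, produces the claimed upper bound; the powers of $n$ arise from summing over the $n$ modes and from the $\log(\prod_{j\neq k}d_j)\asymp n\log d$ factors.

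For the lower bound I would split the error produced by $\mathcal{A}$ into a noise contribution and a bias contribution and exhibit a single hard tensor $\mathcal{T}\in K_{\boldsymbol r}\cap\beta\boldsymbol{B}_\infty$ forcing one of them to be large, which is why the bound is a minimum. The first two terms are noise terms: writing $\mathcal{N}=\mathcal{W}^{(-1/2)}\hadam\mathcal{Z}_\Omega$, the estimate $\widehat{\mathcal{T}}$ reinjects the component of $\mathcal{N}$ that survives the rank-$r_k$ projections $\widehat U_k\widehat U_k^T$ in each mode, so I would lower bound the energy these projections capture. This amounts to showing the top-$r_k$ singular subspace of $\mathcal{N}_{(k)}$ retains a definite fraction of its mass; tracking the captured fraction across all $n$ modes yields the product structure $(\cdot)^{n/2}$, with the two competing expressions reflecting whether one controls the captured singular values by their mean behavior $\tilde d+2C'^2\tilde r$ or by their upper fluctuation $(\sqrt{\tilde d}+\sqrt{2\tilde r\log\tilde r}\,C')^2$. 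The third term $\beta/\sqrt{n\log\tilde d}$ is the bias term: using the flatness of $\mathcal{W}$, I would build a rank-$\boldsymbol r$ tensor whose energy is arranged so that reweighting by $\mathcal{W}^{(-1/2)}\hadam\boldsymbol 1_\Omega$ followed by the HOSVD truncation discards a constant fraction of it, independent of the noise.

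The main obstacle is the noise part of the lower bound. Unlike the upper bound, it needs a matching lower estimate on the singular values (not just an upper bound) of the structured random matrices $\mathcal{N}_{(k)}$, whose entries are neither i.i.d. nor Gaussian because of the reweighting $\mathcal{W}^{(-1/2)}$ and the restriction to $\Omega$; and it must then propagate these per-mode lower bounds through the $n$-fold composition of projections while tracking the correlations introduced by reusing the same realization of $\Omega$ and $\mathcal{Z}$ in every mode. The flatness hypothesis on $\mathcal{W}$ is precisely what tames the reweighting enough to make both the singular-value lower bounds and the adversarial bias construction go through.
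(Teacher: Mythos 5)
Your upper-bound argument is correct and is essentially the paper's own: Theorem \ref{thm: omgsw} is obtained by specializing the deterministic guarantee of Theorem \ref{thm:ub low-rank tensor}, controlling $\mu_k$ and $\|(\mathcal{W}^{(-1/2)}\hadam\boldsymbol{1}_\Omega-\mathcal{W}^{(1/2)})_{(k)}\|_2$ via scalar and matrix Bernstein inequalities (Lemma \ref{lmm:ub for lmb_mu}), and replacing $\|\mathcal{W}^{(1/2)}\|_F^2$ by $|\Omega|$ up to constants via Bernstein (Lemma \ref{lmm:est_omega}), exactly as you describe.

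The lower bound is where there is a genuine gap. Your plan --- lower-bounding the noise energy that the projections $\widehat{U}_k\widehat{U}_k^T$ retain, plus exhibiting an adversarial ``bias'' tensor --- is an algorithm-specific analysis that is never carried out: you yourself flag as unresolved the two steps it hinges on, namely lower bounds (not upper bounds) on the singular values of the structured, non-i.i.d.\ matrices $(\mathcal{W}^{(-1/2)}\hadam\mathcal{Z}_\Omega)_{(k)}$, and the propagation of those bounds through the $n$-fold composition of projections computed from the same realization of $\Omega$ and $\mathcal{Z}$. No mechanism is offered for either, so the proposal is a program, not a proof. The paper avoids these obstacles entirely by an information-theoretic packing argument: Lemma \ref{lmm:FanoE} (Fano's inequality, via the KL divergence between Gaussians) shows that for \emph{any} algorithm there is some $\mathcal{T}$ with weighted error at least $\frac{\kappa}{2}\min_{\mathcal{T}\neq\mathcal{T}'\in S}\|\mathcal{H}\hadam(\mathcal{T}-\mathcal{T}')\|_F$ with probability at least $\frac12$, provided $S$ is a sufficiently large finite set with small $\max_{\mathcal{T}\in S}\|\mathcal{T}_\Omega\|_F$ and $\kappa S\subseteq K$; Lemma \ref{lmm:lb} then constructs such a packing set inside $K_{\boldsymbol{r}}\cap\gamma\mathbf{B}_\infty$ from random sign core tensors $\mathcal{C}\in\{\pm1\}^{r_1\times\cdots\times r_n}$ and sign factor matrices ${}^kU$, with the pairwise separation established by the Hanson--Wright inequality (Theorem \ref{thm:HWI}) applied to the quadratic form $\boldsymbol{\xi}^T\left(\kron_{k=1}^n {}^kU^TD_k{}^kU\right)\boldsymbol{\xi}$. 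Since this bound holds for every estimator, it holds in particular for weighted HOSVD, which is precisely how the paper gets the stated result without analyzing the algorithm at all.

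Relatedly, your reading of where the three terms in the bound come from does not match their actual provenance, which is a symptom of the missing idea. The two $\sigma$ terms are not ``mean behavior versus upper fluctuation of captured singular values''; they are the two branches of the Hanson--Wright tail, governed respectively by the Frobenius norm and the operator norm of $\kron_{k=1}^n({}^kU^TD_k{}^kU)-\left(\prod_{k=1}^n\|\boldsymbol{w_k}\|_1\right)I$, i.e., they quantify the size of the packing set one can certify. The term $\beta/\sqrt{n\log\tilde{d}}$ is not a truncation bias of HOSVD; it arises because the scaling $\kappa$ in Fano's lemma must satisfy $\kappa\max_{\mathcal{T}\in S}\|\mathcal{T}\|_\infty\leq\beta$ so that $\kappa S\subseteq\beta\mathbf{B}_\infty$, combined with the Hoeffding bound $\max_{\mathcal{T}\in S}\|\mathcal{T}\|_\infty\lesssim\sqrt{\prod_k r_k\log\left(8\prod_k d_k\right)}$ on the construction.
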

 \begin{Remark}
 The formal statements in Theorems  \ref{thm: omgsw} and \ref{thm: lb} are more general than the statements in Theorem \ref{thm:bfsomega}.
 \end{Remark}

\section{Experiments}\label{section:simulations}
\subsection{Simulations for Uniform Sampling Pattern } \label{sim: uniform}
In this section, we test the performance of our weighted HOSVD algorithm when the sampling pattern arises from uniform random sampling. Consider a tensor $\mathcal{T}$ of the form $\mathcal{T}=\mathcal{C}\times_1 U_1\times_2 \cdots\times_n U_n$, where $U_i\in\mathbb{R}^{d_i\times r_i}$ and $\mathcal{C}\in\mathbb{R}^{r_1\times \cdots \times r_n}$. Let $\mathcal{Z}$ be a  Gaussian random tensor with $\mathcal{Z}_{i_1\cdots i_n}\in\mathcal{N}(0,\sigma)$ and $\Omega$ be the sampling pattern set  according to uniform sampling.
In this simulation, we  compare the results of numerical experiments for using the HOSVD algorithm to solve 
\begin{equation}\label{eqn:rgl_obj}
\widehat{\mathcal{T}}=\underset{\text{Tucker}\_\rank(\mathcal{X})=\boldsymbol{r}}{\argmin}\left\|\mathcal{X}-\mathcal{Y}_{\Omega}\right\|_F,
\end{equation} 
\begin{equation}\label{eqn:std_obj}
\widehat{\mathcal{T}}=\underset{\text{Tucker}\_\rank(\mathcal{X})=\boldsymbol{r}}{\argmin}\left\|\mathcal{X}-\frac{1}{p} \mathcal{Y}_{\Omega}\right\|_F,
\end{equation} and  
\begin{equation}\label{eqn:bia_obj}
\widehat{\mathcal{T}}=\mathcal W^{(-1/2)}\hadam\underset{\text{Tucker}\_\rank(\mathcal{X})=\boldsymbol{r}}{\argmin} \left\|\mathcal{X}-\mathcal W^{(-1/2)}\hadam \mathcal{Y}_{\Omega}\right\|_F,
\end{equation}
 where $p=\frac{|\Omega|}{\prod_{k=1}^{n}d_k}$ and $\mathcal{Y}_\Omega=\mathcal{T}_\Omega+\mathcal{Z}_\Omega$.

First, we generate a synthetic sampling set $\Omega$ with sampling rate SR$:\,=\frac{|\Omega|}{\prod_{k=1}^{n}d_k}=30\%$ and find a weight tensor $\mathcal{W}$   by solving 
\begin{equation}
    \mathcal{W}=\underset{\mathcal{X}\succ 0,\text{rank}(\mathcal{X})=1}{\argmin}\|\mathcal{X}-\boldsymbol{1}_{\Omega}\|_F
\end{equation}
 via the alternating least squares method for the non-negative CP decomposition. Next, we generate
synthetic  tensors $\mathcal{T}\in\mathbb{R}^{d_1\times\cdots\times d_n}$  of the form
$\mathcal{C}\times_1 U_1\times_2 \cdots\times_n U_n$ with $n=3,4$ with $\rank(\mathcal{T}_{(i)})=r$, where $i=1,\cdots,n$, and $r$ varies from $2$ to $10$.
Then we add mean zero Gaussion random noise $\mathcal{Z}$ with variance $\sigma=10^{-2}$ so that a new tensor is generated, which is denoted by $\mathcal{Y}=\mathcal{T}+\mathcal{Z}$.  Then we solve the tensor completion problems \eqref{eqn:rgl_obj}, \eqref{eqn:std_obj} and \eqref{eqn:bia_obj} by the HOSVD procedure. For each fixed low-rank tensor, we average over $20$ tests. We measure error using the weighted Frobenius norm.  The simulation results are reported in Figures \ref{fig:T3_rk_error_Uniform} and \ref{fig:T4_rk_error_Uniform}.  Figure \ref{fig:T3_rk_error_Uniform} shows the results for the tensor of size $100\times 100\times 100$ and \mbox{Figure \ref{fig:T4_rk_error_Uniform}} shows the results for the tensor of size $50\times 50\times 30\times 30$, where the weighted error %in (a) 
is of the form $\frac{\|\mathcal{W}^{(1/2)}\hadam(\widehat{\mathcal{T}}-\mathcal{T})\|_F}{\|\mathcal{W}^{(1/2)}\|}$.
%and the unweighted error in (b) is of the form $\frac{\|\widehat{\mathcal{T}}-\mathcal{T}\|_F}{\|\tens{T}\|_F}$. 
These figures demonstrate that using our  weighted samples performs more efficiently than using the original samples. For the uniform sampling case, the $p$ weighted samples and $\mathcal{W}$ weighted samples exhibit similar performance.
\vspace{-12pt}
\begin{figure}[H]
   \centering
		\includegraphics[width=10cm]
		{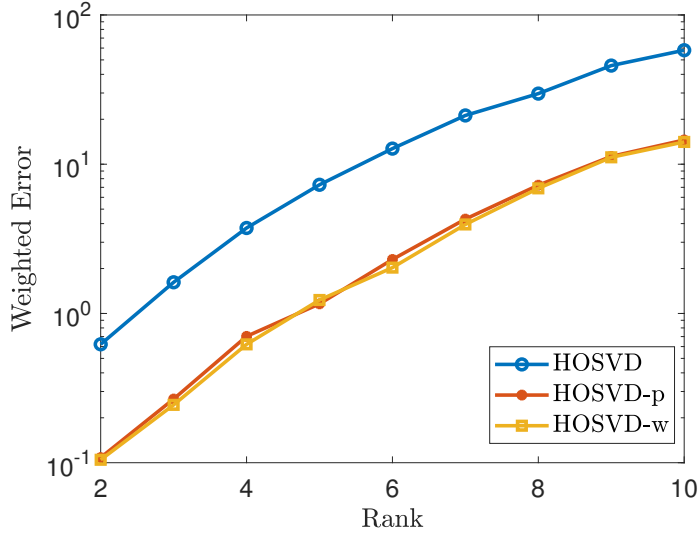}
		
		\label{FIG:T3_Weighted_error0}
	  \caption{Tensor of size $100\times 100\times 100$ using the uniform sampling pattern:
	  plots the errors of the form $\frac{\|\mathcal{W}^{(1/2)}\hadam(\widehat{\mathcal{T}}-\mathcal{T})\|_F}{\|\mathcal{W}^{(1/2)}\|_F}$. 
	  The lines labeled as HOSVD, HOSVD-p and HOSVD-w represent the results for solving \eqref{eqn:rgl_obj}, \eqref{eqn:std_obj} and \eqref{eqn:bia_obj}, respectively. }
    \label{fig:T3_rk_error_Uniform}
	\end{figure}
	\vspace{-15pt}
	\begin{figure}[H]
	\centering
		\includegraphics[width=10cm]
		{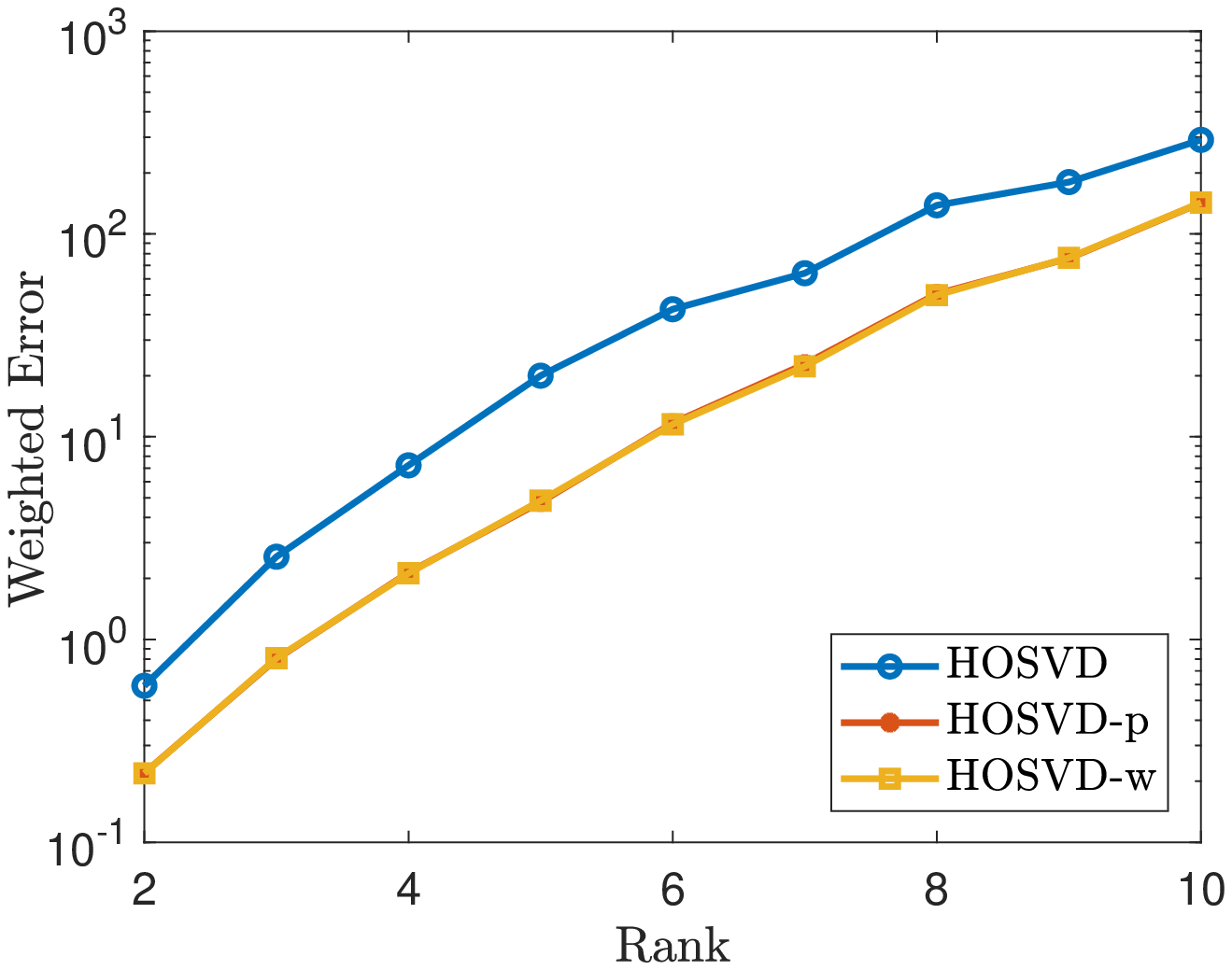}
		\label{FIG:T3_Weighted_error1}
	  \caption{Tensor of size $50\times 50\times 30\times 30$ using the uniform sampling pattern:
	  plots the errors of the form $\frac{\|\mathcal{W}^{(1/2)}\hadam(\widehat{\mathcal{T}}-\mathcal{T})\|_F}{\|\mathcal{W}^{(1/2)}\|_F}$. 
	  The lines labeled as HOSVD, HOSVD-p and HOSVD-w represent the results for solving \eqref{eqn:rgl_obj}, \eqref{eqn:std_obj} and \eqref{eqn:bia_obj}, respectively. }
    \label{fig:T4_rk_error_Uniform}
	\end{figure}
\subsection{Simulation for Non-Uniform Sampling Pattern }
To generate a non-uniform sampling pattern with sampling rate $30\%$, we first  generate a CP rank 1 tensor of the form $\mathcal{H}=\llbracket \boldsymbol{1};\boldsymbol{h_1},\cdots,\boldsymbol{h_n}\rrbracket$, where $\boldsymbol{h_i}=(u_i\boldsymbol{1}_{\lceil d_i/2\rceil}, v_i\boldsymbol{1}_{\lfloor d_i/2\rfloor})$ $0<u_i,v_i\leq 1$. Let $\Omega\sim \mathcal{H}$. Then we repeat the process as in Section \ref{sim: uniform}. The simulation results are shown in Figures \ref{fig:T3_rk_error_nonUniform} and
\ref{fig:T4_rk_error_nonUniform}. 
As shown in figures, the results using our proposed weighted samples perform more efficiently than using the $p$ weighted samples.
\begin{figure}[H]
\centering
		\includegraphics[width=10cm]
		{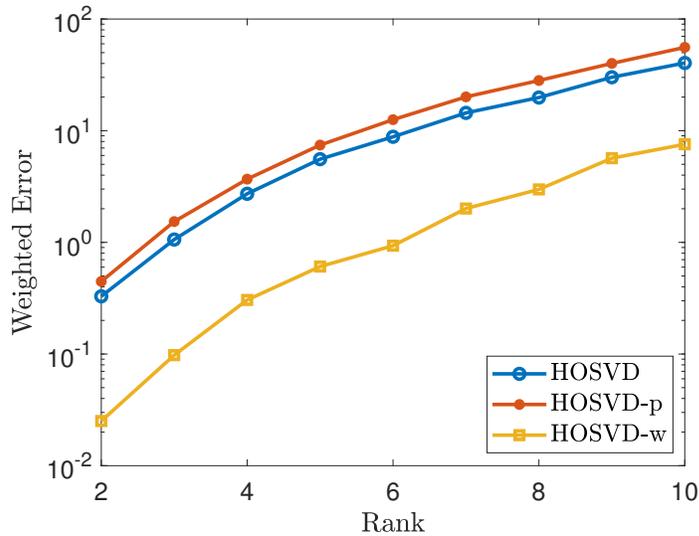}
		\label{FIG:T3_Weighted_error_N}
	  \caption{Tensor of size $100\times 100\times 100$ using the non-uniform sampling pattern: %(a)
	  plots the errors of the form $\frac{\|\mathcal{W}^{(1/2)}\hadam(\widehat{\mathcal{T}}-\mathcal{T})\|_F}{\|\mathcal{W}^{(1/2)}\|_F}$. % and (b) plots the errors of the form $\frac{\|\widehat{\mathcal{T}}-\mathcal{T}\|_F}{\|\tens{T}\|_F}$. 
	  The lines labeled as HOSVD, HOSVD-p and HOSVD-w represent the results for solving \eqref{eqn:rgl_obj}, \eqref{eqn:std_obj} and \eqref{eqn:bia_obj}, respectively. }
    \label{fig:T3_rk_error_nonUniform}
	\end{figure}
	\begin{figure}[H]
	\centering
		\includegraphics[width=10cm]
		{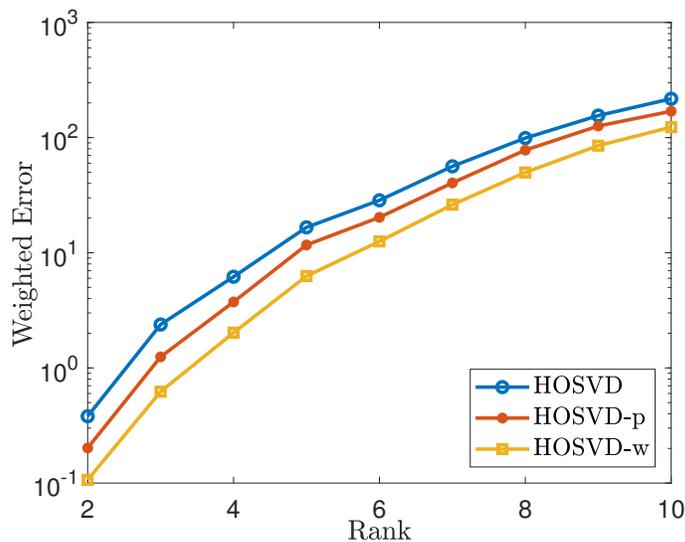}
	  \caption{Tensor of size $50\times 50\times 30\times 30$ using the non-uniform sampling pattern: % (a) 
	  plots the errors of the form $\frac{\|\mathcal{W}^{(1/2)}\hadam(\widehat{\mathcal{T}}-\mathcal{T})\|_F}{\|\mathcal{W}^{(1/2)}\|_F}$. % and (b) plots the errors of the form $\frac{\|\widehat{\mathcal{T}}-\mathcal{T}\|_F}{\|\tens{T}\|_F}$. 
	  The lines labeled as HOSVD, HOSVD-p and HOSVD-w represent the results for solving \eqref{eqn:rgl_obj}, \eqref{eqn:std_obj} and \eqref{eqn:bia_obj}, respectively. }
    \label{fig:T4_rk_error_nonUniform}
	\end{figure}
% \subsection{Sampling \texorpdfstring{$\Omega\sim\mathcal{W}$}{Lg}} 
% In the following experiment, we simulate the case sampling
% $\Omega\sim\mathcal{W}$ for a rank-1 tensor $\mathcal{W}$. For simplicity,  we consider $n=3$ and all $d_i=d$ for all $i=1,2,3$. Let $\mathcal{W}=\boldsymbol{w_1}\hadam\cdots\hadam \boldsymbol{w_n}$ with $\boldsymbol{w_k}\in (0,1)^{d_k}$. 

% We choose several different $\mathcal{W}$'s with different levels of ``flatness," to show how the performance of our algorithm depends on the flatness of $\mathcal{W}$.

\begin{Remark}
	When we use the HOSVD procedure to solve \eqref{eqn:rgl_obj}, \eqref{eqn:std_obj}, and  \eqref{eqn:bia_obj}, we need (an estimate of) the Tucker rank as input. Instead of inputting the real rank of the true tensor, we could also use the rank that is estimated by considering the decay of the singular values for the unfolded matrices of the sampled tensor $\mathcal{Y}_{\Omega}$ as the input rank, which we call SV-rank. The simulation results for the non-uniform sampling pattern with SV-rank as input are reported in Figure \ref{fig:T3_rk_error_nonUniform_SV}. The simulation shows that the weighted HOSVD algorithm  performs more efficiently than using the $p$ weighted samples or the original samples. Comparing Figure \ref{fig:T3_rk_error_nonUniform_SV} with Figure \ref{fig:T3_rk_error_nonUniform}, we could observe that using the estimated rank as input for HOSVD procedure performs  even better than using the real rank as input. This observation motivates a way to find a ``good" rank as input for HOSVD procedure.
	\end{Remark}
\begin{Remark}
We only provide guarantees on the performance in the weighted Frobenius
norm, (as we report the weighted error $\frac{\|\mathcal{W}^{(1/2)}\hadam(\widehat{\mathcal{T}}-\mathcal{T})\|_F}{\|\mathcal{W}^{(1/2)}\|_F}$), our procedures exhibit good empirical performance even in the usual relative error  $\frac{\|\widehat{\mathcal{T}}-\mathcal{T}\|_F}{\|\mathcal{T}\|_F}$ when the Tucker rank of the tensor is relatively low. However, we observe that the advantages of weighted HOSVD scheme tend to be diminished in terms of relative error when the Tucker rank increases. This result is not surprising since the entries are treated unequally in scheme \eqref{eqn:bia_obj}. %as it focuses more on the observed entries.
Therefore we leave the investigation on relative error and the tensor rank for future work.

	\begin{figure}[H]
\centering
		\includegraphics[width=10cm]{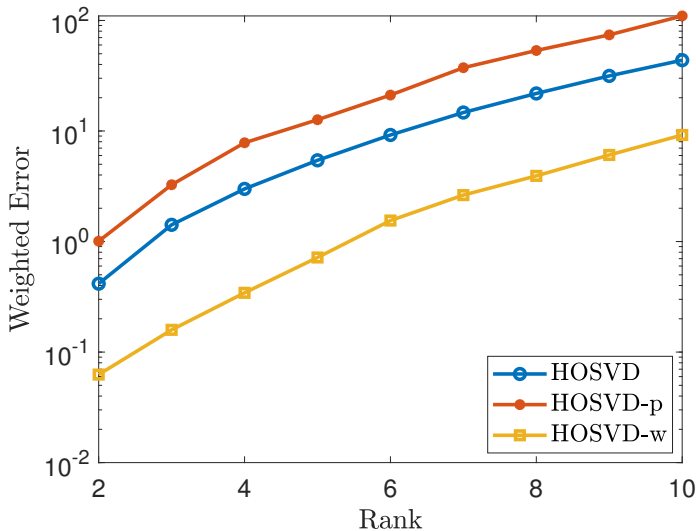}
	  \caption{Tensor of size $100\times 100\times 100$ using the non-uniform sampling pattern and with the SV-rank as the input rank: 
	  plots the errors of the form $\frac{\|\mathcal{W}^{(1/2)}\hadam(\widehat{\mathcal{T}}-\mathcal{T})\|_F}{\|\mathcal{W}^{(1/2)}\|_F}$. 
	  }
    \label{fig:T3_rk_error_nonUniform_SV}
    \end{figure}
\end{Remark}

\subsection{Test for Real Data}  \label{Test for real data}

In this section, we test our weighted HOSVD algorithm for tensor completion on three videos, see \cite{FHL2018}. The dataset is the tennis-serve data from an Olympic Sports Dataset \cite{niebles2010modeling}\footnote{One can download the dataset from 
\href{http://vision.stanford.edu/Datasets/OlympicSports/}{\bf\texttt{http://vision.stanford.edu/Datasets}}.
There are a lot of videos in the zip file and we only choose three of them: ``d2P\_zx\_JeoQ\_00120\_00515.seq” (video 1), ``gs3sPDfbeg4\_00082\_00229.seq”(video 2), and ``VADoc-AsyXk\_00061\_
0019.seq” (video 3).}. %mdpi: footnote is not allowed, so we moved it here, please confirm. and please add a accessed date for this weblink
The three videos are color video. In our simulation, we use the same setup as the one in \cite{FHL2018}, and choose 30 frames evenly from each video. For each  frame, the size is scaled to $360\times 480\times 3$, so each video is transformed into a 4-D tensor data of size $360\times 480\times 3\times 30$. The first frame of each video after preprocessing is illustrated in Figure \ref{FIG: video}.

\begin{figure}[H]
    \centering
		\begin{subfigure}[b]{0.32\linewidth}
		\includegraphics[width=\textwidth]
		{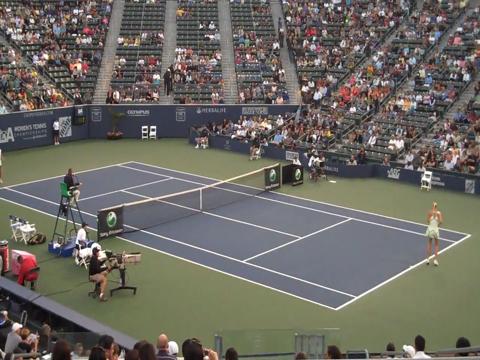}
		\caption{Video 1}
		\label{FIG:video1}
	\end{subfigure}
		\begin{subfigure}[b]{0.32\linewidth}
		\includegraphics[width=\textwidth]
		{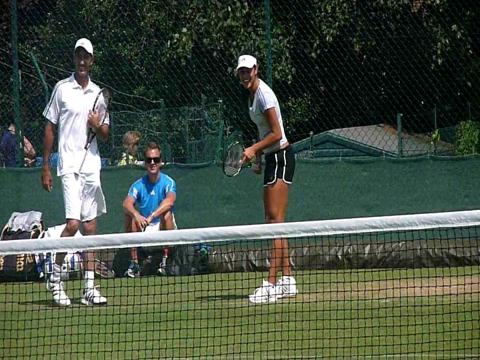}
		\caption{Video 2}
	\label{FIG:video2}
	\end{subfigure}
	\begin{subfigure}[b]{0.32\linewidth}
		\includegraphics[width=\textwidth]
		{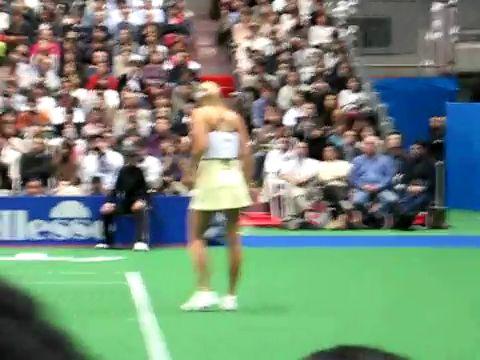}
		\caption{Video 3}
		\label{FIG:video3}
	\end{subfigure}
	  \caption{The first frame of videos \cite{FHL2018}. }%mdpi: please add explanation for (a--c) in figure caption. Please also add citation of resource on this figure and include in reference list. The ciation sould be in neumerical order.
	  \label{FIG: video}
\end{figure}
We implement the experiments for different sampling rates of $10\%$, $30\%$, $50\%$, and $80\%$ to generate uniform and non-uniform sampling patterns $\Omega$. In our implementation, we  use the SV-rank of $\mathcal{T}_{\Omega}$ as the input rank. %estimate the input Tucker rank by considering the decay of the singular values for each unfolded matrices of the sampled tensor $\mathcal{T}_{\Omega}$. 
According to the generated sampling pattern, we find a weight tensor $\mathcal{W}$ and find estimates $\widehat{\mathcal{T}}_1$ and $\widehat{\mathcal{T}}_2$ by considering \eqref{eqn:rgl_obj} and \eqref{eqn:bia_obj} respectively, using the input Tucker rank $\boldsymbol{r}$. The entries on $\mathcal{T}_1$ and $\mathcal{T}_2$ are forced to be the observed data.  The Signal to Noise Ratio (SNR)
$$SNR(\widehat{\mathcal{T}}) = -20\log_{10}\left(\frac{\|\widehat{\mathcal{T}}-\mathcal{T}\|_F}{\|\mathcal{T}\|_F}\right)$$
are computed and the simulation results are reported in Tables \ref{tab:video} and \ref{tab2:video}. As shown in the tables, applying HOSVD process to \eqref{eqn:bia_obj} can give a better result than applying HOSVD process to \eqref{eqn:rgl_obj} directly regardless of the uniformity of the sampling pattern.
 
\begin{table}[!ht]
\renewcommand{\arraystretch}{1.2}
    \centering

    \begin{tabular}{|c|c|c|c|c|c|c|}\hline 
Video & SR & Input Rank & HOSVD\_w$+$TV & HOSVD & HOSVD\_w/HOSVD\_p & TVM \\ \hline

\multirow{4}{*}{\includegraphics[width=0.12\textwidth]{figures/1.jpg}}
& 10\% &$[7~17~3~5]$  & 13.29 (16.3s) & 1.27 (3.74s) &  10.15 (11.4s) &  13.04 (41.3s) \\
& 30\% &$[18~10~3~6]$ & 16.96 (14.0s) & 4.26 (4.01s) &  12.05 (7.23s) &  17.05 (29.7s) \\
& 50\% &$[26~4~3~11]$ & 19.60 (12.2s) & 8.21 (2.99s) &  14.59 (7.03s) &  19.68 (23.8s)\\
& 80\% &$[47~47~3~22]$& 24.90 (11.5s) & 17.29 (6.55s)&  19.75 (8.08s) &  25.01 (18.1s) \\
\hline

\multirow{4}{*}{\includegraphics[width=0.12\textwidth]{figures/2.jpg}} 

& 10\% &$[28~6~3~7]$   & 10.98 (13.1s)  &  1.19 (4.20s) & 7.88 (8.76s)  &  10.89 (42.2s) \\
& 30\% &$[34~18~3~15]$ & 14.44 (16.1s)  &  4.11 (3.80s) & 10.40 (7.51s)  &  14.50 (31.4s)\\
& 50\% &$[35~33~3~9]$  & 16.95 (15.3s) &  7.85 (5.86s) & 12.84 (7.64s) &  16.96 (26.6s)\\
& 80\% &$[56~50~3~21]$ & 22.21 (15.1s) & 16.51 (7.24s) & 18.64 (8.45s) &  22.19  (18.4s)\\
 \hline

\multirow{4}{*}{\includegraphics[width=0.12\textwidth]{figures/3.jpg}} 
& 10\% &$[12~9~3~10]$  & 12.34 (16.1s) & 1.22  (2.73s)&  8.46 (9.88s) &  12.23 (45.7s)\\
& 30\% &$[20~24~3~11]$ & 17.10 (15.3s) & 4.24  (3.17s)&  11.62 (7.62s) &  17.19 (35.3s) \\
& 50\% &$[25~32~3~14]$ & 20.44 (12.3s) & 8.20  (3.92s)&  14.54 (5.85s) &  20.49 (28.9s)\\
& 80\% &$[50~72~3~30]$ & 26.80 (12.4s) & 18.03 (8.40s)&  21.38 (8.93s)&  26.71 (20.9s)\\
 \hline
    \end{tabular}
        \caption{Signal to noise ratio (SNR) and elapsed time (in second) for HOSVD and HOSVD\_w on video data with uniform sampling pattern. The HOSVD\_w and HOSVD\_p behave very similar for uniform sampling hence we integrate the results into one column.}
    \label{tab:video}
\end{table}

\begin{table}[!ht]
\renewcommand{\arraystretch}{1.2}
    \centering
    \begin{tabular}{|c|c|c|c|c|c|}\hline
Video & SR &Input Rank& HOSVD & HOSVD\_w &HOSVD\_p  \\ \hline
\multirow{4}{*}{\includegraphics[width=0.12\textwidth]{figures/1.jpg}}
& 10\%&$\left[
6~13~3~3
\right]$   & 1.09  &  10.07  & 5.56\\
 &   30\%&$\left[
10~28~3~16
\right]$   & 3.74 & 11.81  & 7.53\\
 &   50\%& $\left[
21~41~3~14
\right]$  & 7.05 & 13.22 & 10.73\\
&80\%& $\left[44  ~57 ~    3 ~   26
\right]$& 15.76    & 19.60 & 17.39
\\
 \hline
\multirow{4}{*}{\includegraphics[width=0.12\textwidth]{figures/2.jpg}} 
& 10\%& $\left[
38~11~3~2
\right]$ & 1.13 &  8.04 & 4.33\\

 &   30\% &$\left[
26~19~3~16
\right]$ &3.79 & 10.13 & 6.80 \\
 &  50\% & $\left[
30~27~3~10
\right]$& 7.15 & 12.57 &10.14 \\
&80\%&$\left[53  ~ 50~     3~    23
\right]$& 14.81   & 18.55 &16.31\\

 \hline

\multirow{4}{*}{\includegraphics[width=0.12\textwidth]{figures/3.jpg}} 
 & 10\%&$\left[
16~11~3~2
\right]$  & 1.09 & 8.31 &4.73   \\
 & 30\% &$\left[
17~23~3~17
\right]$  & 3.76 & 11.05 &6.87 \\
 &  50\%  &$\left[
24~38~3~14
\right]$ & 7.18 & 13.78 &9.99 \\
&80\%& $\left[
47~69~3~22
\right]$& 15.88 &   20.82 &16.02\\

%  & 10\%&$\left[
% 16~11~3~2
% \right]$  & 0.8816 & 0.3998    \\
%  & 30\% &$\left[
% 17~23~3~17
% \right]$  &0.6484 & 0.2887  \\
%  &  50\%  &$\left[
% 24~38~3~14
% \right]$ &0.4371& 0.2105  \\
% &80\%& $\left[
% 47~69~3~22
% \right]$& 0.1606 &   0.1143\\
 \hline
    \end{tabular}
        \caption{Signal to noise ratio (SNR) for HOSVD and HOSVD\_w  on video data with non-uniform sampling pattern.}
    \label{tab2:video}
\end{table}

Finally, we test the proposed  weighted HOSVD algorithm on real candle video data named ``candle\_4\_A''\footnote{The dataset can be downloaded from the Dynamic Texture Toolbox in \href{http://www.vision.jhu.edu/code/}{http://www.vision.jhu.edu/code/})}. 
We have tested the relation between the relative errors and the sampling rates using $\boldsymbol{r}=(5,5,5)$ as the input rank for HOSVD algorithm. The relative errors are presented in Figure \ref{fig:RE_SR_video_candle4A}. The simulation results also show that the proposed weighted HOSVD algorithm can implement tensor completion efficiently.

\begin{figure}[H]
\centering
    \begin{subfigure}[b]{0.49\linewidth}
    \includegraphics[width=\textwidth]{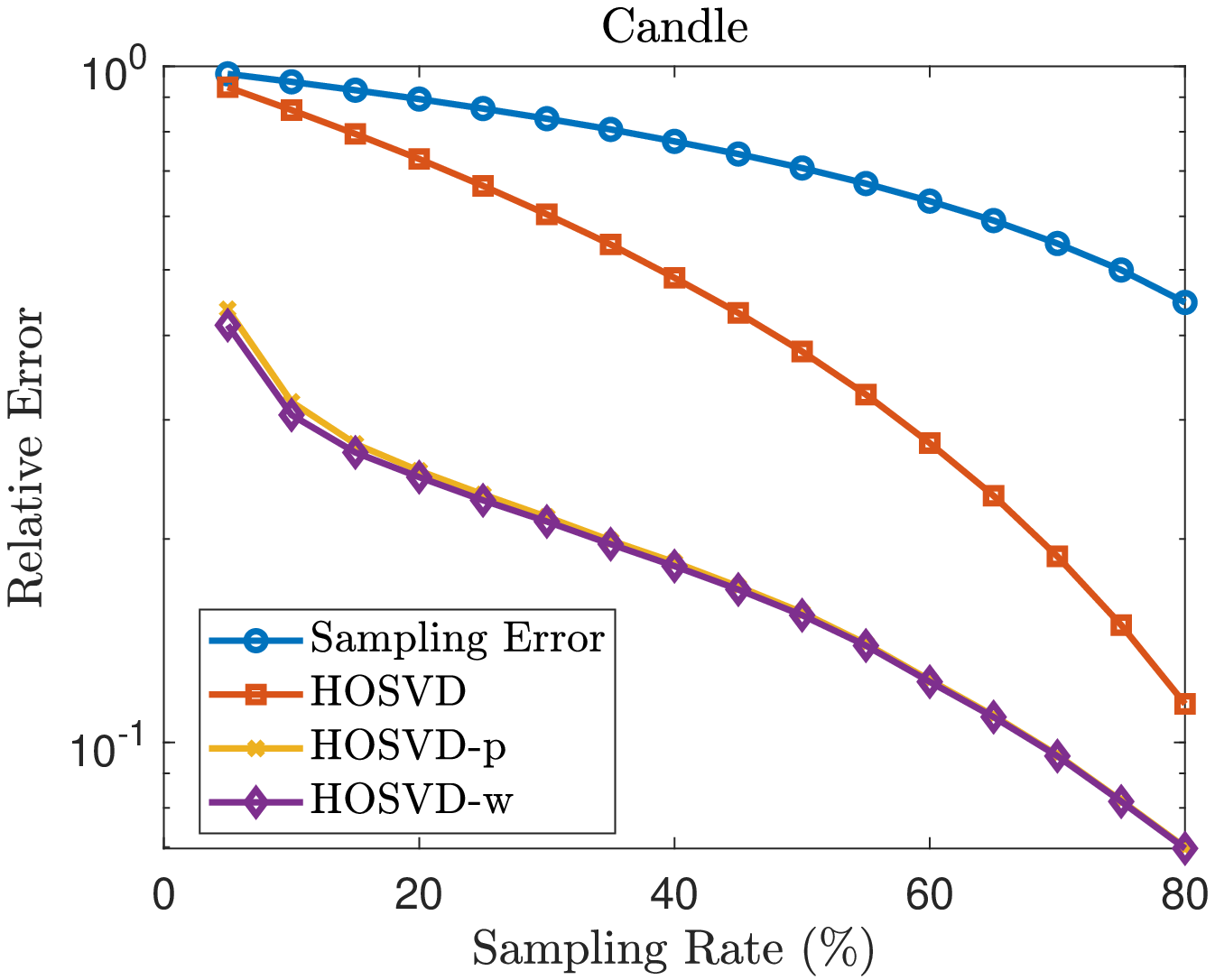}
    \end{subfigure}
    \begin{subfigure}[b]{0.49\linewidth}
     \includegraphics[width=\textwidth]{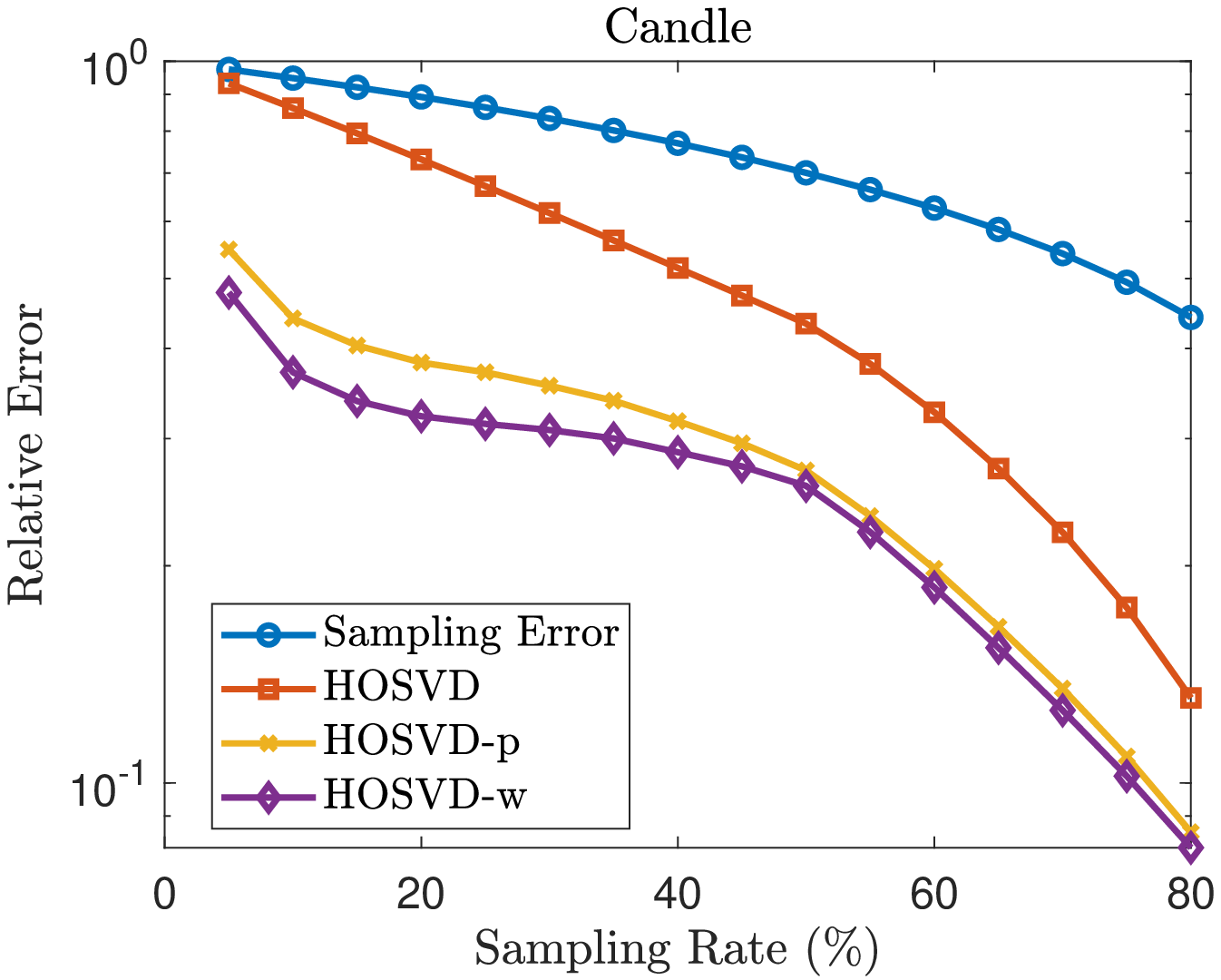}
    \end{subfigure}
    \caption{Relation between relative error and sampling rate for the dataset ``candle\_4\_A'' using $[5,5,5]$ as the input rank for HOSVD process. The left figure records the relative error for the uniform sampling pattern and the right figure for the non-uniform sampling pattern.  {The sampling error stands for the relative error between the original video and the video with masked entries estimated to be zeros, hence should approximately equal to $\sqrt{1-SR}$, where SR is the sampling rate}.} %mdpi: please change hyphen into minus in figure.
    \label{fig:RE_SR_video_candle4A}
\end{figure}
\subsection{The Application of  Weighted HOSVD on Total Variation Minimization}
As shown in the previous simulations, the weighted HOSVD decomposition can provide better results for tensor completion by comparing with HOSVD. There are a bunch of algorithms that are Sensitive to initialization. Additionally, real applications may have higher requirements for accuracy. Therefore, it is meaningful to combine our weighted HOSVD with other algorithms in order to further improve the performance. In this section, we would consider the application of weighted HOSVD decomposition on the total variation minimization algorithm. 
As a traditional approach, the total variation  minimization (TVM), 
is broadly applied in studies about image recovery and denoising. While the earliest research could trace back to 1992 \cite{rudin1992nonlinear}. The later studies combined TVM and other low rank approximation algorithms such as Nuclear Norm Minimization (see e.g., \cite{wu2017structure, madathil2018twist, yao2015accelerated}) and HOSVD (e.g.,  \cite{wang2017hyperspectral, ji2016tensor, li2017low}) in order to achieve better performance in image and video completion tasks.
%In this section, we apply  total variation minimization with tensor setting on the result from weighted HOSVD in order to improve the performance of the tensor completion task. 

Motivated by the matrix TV minimization, we proposed the tensor TV minimization which  is summarized in Algorithm \ref{alg:TCTV}.
%We use the following algorithm to minimize the tensor TV norm, which is inherited from the matrix TV  minimization.
In Algorithm \ref{alg:TCTV}, the Laplacian operator computes the divergence of all-dimension gradients for each entry of the tensor. The shrink operator simply moves the input towards 0 with distance $\lambda$, or formally defined as:
\[\text{shrink}(x,\lambda) = \mathbf{sign}(x)\cdot \max(|x|-\lambda, 0)\]

For the initialization of $\tens{X}^0$  in Algorithm \ref{alg:TCTV}, we assign $\tens{X}^{0}$ to be the output of the result from HOSVD-w. Applying the same experiment setting as in Section \ref{Test for real data}, we evaluate the performance of the cocktail approach as well as the regular HOSVD approach. We report the simulation results in Table \ref{tab:video} and we measure  the performances by considering the signal to noise ratio(SNR).
As shown in Table \ref{tab:video}, the total variation  minimization could be applied to further improve the result of \eqref{eqn:bia_obj}. Specifically, the TVM with $\mathbf{0}$ as initialization performs similar to TVM with HOSVD-w as initialization when the observed rate is high, but the HOSVD-w initialization could improve the performance of TVM when the observed rate is very low (e.g., $10\%$). Additionally, we compared the decay of relative error for using the weighted HOSVD output as initialization and the default initialization ($\tens{X}^0 = \mathbf{0}$). The iterative results are shown in Figure \ref{fig:convergence_compar}, and it shows that  using the result from weighted HOSVD as an initialization could notably reduce the iterations of TV-minimization for achieving the convergence threshold ($\|\tens{X}^k-\tens{X}^{k-1}\|_F<10^{-4}$).

\vspace{2mm}
\begin{algorithm*}[H]
\SetKwInOut{Input}{Input}\SetKwInOut{Output}{Output}
\Input{Noised tensor $\tens{T}\in\mathbb{R}^{d_1\times \dots \times d_n}$; Sampling pattern $\Omega\in\{0,1\}^{d_1\times \dots \times d_n}$;
stepsize $h_k$, threshold $\lambda$;
$\tens{X}^0\in\mathbb{R}^{d_1\times\cdots\times d_n}$.}

{Set $\tens{X}^0 = \mathcal{X}^0+(\tens{T}_\Omega-\tens{X}^0_{\Omega})$.}

\For{$k = 0:K$}{
    \For{$i = 1:n$}{
        $\nabla_i(\tens{X}^k_{\alpha_1,...,\alpha_n})=\tens{X}^k_{\alpha_1,...,\alpha_i+1,...,\alpha_n}-\tens{X}^k_{\alpha_1,...\alpha_i,...,\alpha_n},  (\alpha_i=1, 2,...,d_i-1)$
        ($\nabla_i(\cdot) = 0 \text{ when } \alpha_i = d_i$)
        
        $\Delta_i(\tens{X}^k_{\alpha_1,...,\alpha_n})=\tens{X}^k_{\alpha_1,...,\alpha_i-1,...,\alpha_n}+\tens{X}^k_{\alpha_1,...,\alpha_i+1,...,\alpha_n}-2\tens{X}^k_{\alpha_1,...\alpha_i,...,\alpha_n}, (\alpha_i=2,3,...,d_i-1)$
        ($\Delta_i(\cdot) = 0 \text{ when } \alpha_i = 1$ or $d_i$)
    }
    $\Delta(\tens{X}^k_{\alpha_1,...,\alpha_n}) = \sum_i\Delta_i(\tens{X}^k_{\alpha_1,...,\alpha_n})$\\
    $\tens{X}^{k+1}_{\alpha_1,...,\alpha_n} = \tens{X}^k_{\alpha_1,...,\alpha_n}+h_k\cdot\text{shrink}(\frac{\Delta(\tens{X}^k_{\alpha_1,...,\alpha_n})}{\sqrt{\sum_i\nabla_i^2(\tens{X}^k_{\alpha_1,...,\alpha_n})}},\lambda)$\\
    $\tens{X}_\Omega^{k+1} = \tens{T}_\Omega$
    }
\Output{$\tens{X}^K$}
\caption{TV  Minimization for Tensor}
\label{alg:TCTV}
\end{algorithm*}
\vspace{2mm}

\begin{figure}[H]
		\begin{subfigure}[b]{0.49\linewidth}
		\includegraphics[width=\textwidth]
		{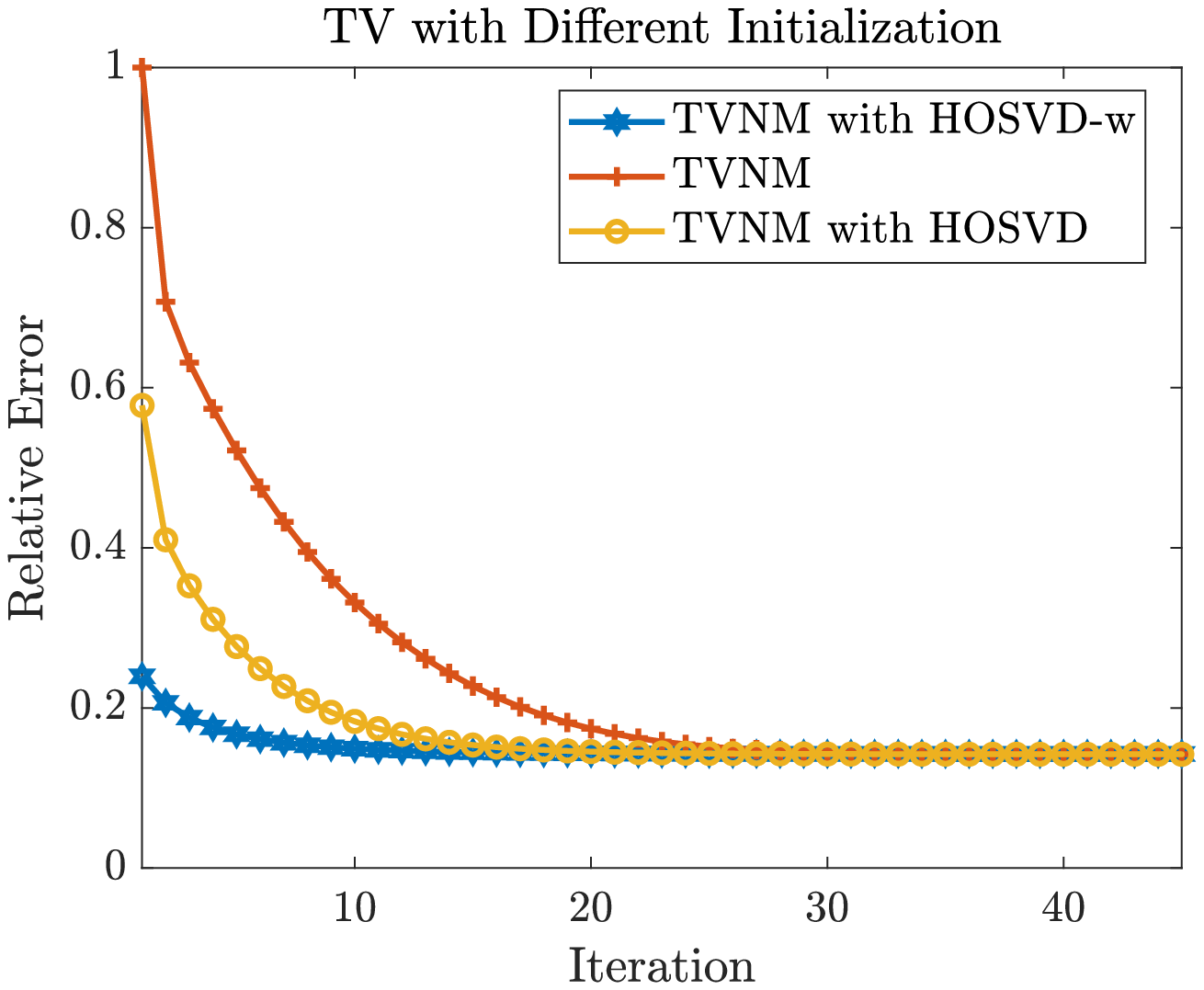}
		\caption{}
		\label{FIG:cc_iter}
	\end{subfigure}
		\begin{subfigure}[b]{0.49\linewidth}
		\includegraphics[width=\textwidth]
		{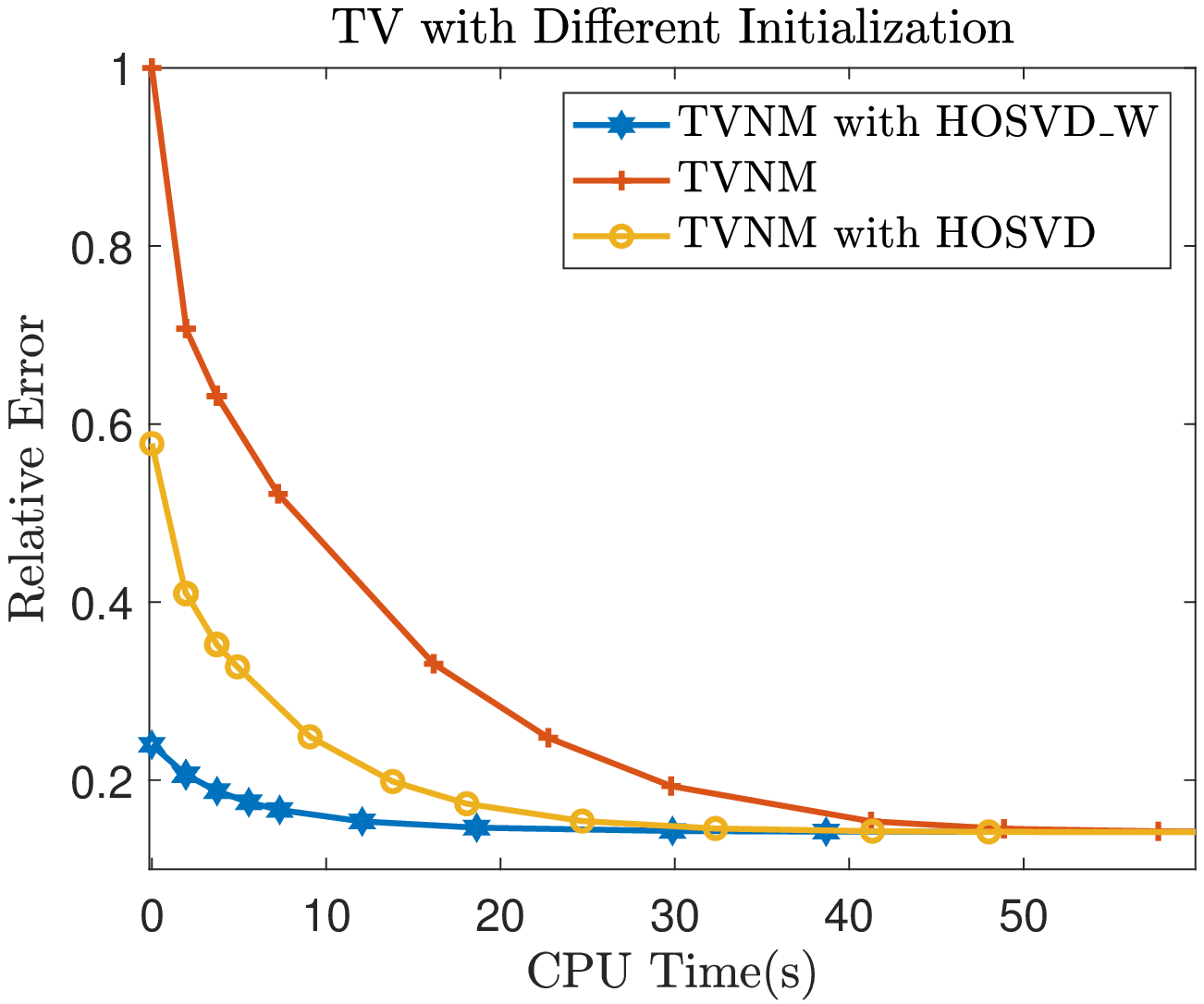}
		\caption{}
		\label{FIG:cc_cpu}
	\end{subfigure}
	  \caption{Convergence comparison between total variation
minimization (TVM) with HOSVD-w, $\mathbf{0}$, and HOSVD  as initialization on video 1 with SR = 50\%:
	  (\textbf{a})   the relative error $\frac{\|\widehat{\mathcal{T}}-\mathcal{T}\|_F}{\|\tens{T}\|_F}$ vs. number of iterations. (\textbf{b})   the relative error v.s. total computational CPU time(initialization + completion).}
    \label{fig:convergence_compar}
    \end{figure}
    
\section{Conclusions}
In this paper, we  propose a simple but efficient algorithm named the weighted HOSVD algorithm for recovering an underlying low-rank tensor from noisy observations. For this algorithm, we  provide  upper and lower error bounds that measure the difference between the estimates and the true underlying low-rank tensor. The efficiency of our proposed weighted HOSVD algorithm is also shown by numerical simulations. Additionally, the result of our weighted HOSVD algorithm can be used as an initialization for the total variation minimization algorithm,
which shows that using our method as an initialization for the total variation minimization algorithm can increasingly reduce the iterative steps leading to improved overall performance in reconstruction (see our conference paper \cite{CHN2020}).  It would be interesting for future work to combine the weighted HOSVD algorithm with other algorithms to achieve more accurate results for tensor completion in many settings.

\section*{Acknowledgements}
 The  authors are supported by NSF DMS $\#2011140$ and NSF BIGDATA $\#1740325$. The authors take pleasure in thanking Hanqin Cai,  Keaton Hamm, Armenak Petrosyan, Bin Sun, and  Tao Wang for comments and suggestions on  the manuscript.

\appendix

\section{ Proof for Theorem 1}\label{section:proofFgub}
In this appendix, we provide the proof for Theorem \ref{thm:gub tensor}.
\begin{proof}[Proof of Theorem \ref{thm:gub tensor}]
Let $\mathcal{Y}_\Omega=\mathcal{T}_{\Omega}+\mathcal{Z}_{\Omega}$.
\begin{eqnarray*}
&&\left\|\mathcal{W}^{(1/2)}\hadam(\mathcal{T}-\widehat{\mathcal{T}})\right\|_F\\
&=&\left\|\mathcal{W}^{(1/2)}\hadam\mathcal{T}-\mathcal{W}^{(-1/2)}\hadam\mathcal{Y}_\Omega+\mathcal{W}^{(-1/2)}\hadam\mathcal{Y}_\Omega-\mathcal{W}^{(1/2)}\hadam\widehat{\mathcal{T}}\right\|_F\\
&\leq&\left\|\mathcal{W}^{(1/2)}\hadam\mathcal{T}-\mathcal{W}^{(-1/2)}\hadam\mathcal{Y}_\Omega\right\|_F+\left\|\mathcal{W}^{(-1/2)}\hadam\mathcal{Y}_\Omega-\mathcal{W}^{(1/2)}\hadam\widehat{\mathcal{T}}\right\|_F\\
&\leq&2\left\|\mathcal{W}^{(1/2)}\hadam\mathcal{T}-\mathcal{W}^{(-1/2)}\hadam\mathcal{Y}_\Omega\right\|_F\\
&=&2\left\|\mathcal{W}^{(1/2)}\hadam\mathcal{T}-\mathcal{W}^{(-1/2)}\hadam(\mathcal{T}_\Omega+\mathcal{Z}_{\Omega})\right\|_F\\
&\leq&2\left\|\mathcal{W}^{(1/2)}\hadam\mathcal{T}-\mathcal{W}^{(-1/2)}\hadam \boldsymbol{1}_{\Omega}\hadam\mathcal{T}\right\|_F+2\left\|\mathcal{W}^{(-1/2)}\hadam\mathcal{Z}_{\Omega}\right\|_F\\
&\leq&2\left\|\mathcal{T}\hadam(\mathcal{W}^{(1/2)}-\mathcal{W}^{(-1/2)}\hadam \boldsymbol{1}_{\Omega})\right\|_F+2\left\|\mathcal{W}^{(-1/2)}\hadam\mathcal{Z}_{\Omega}\right\|_F\\
&\leq&2\left\|\mathcal{T}\right\|_{\infty}\left\|\mathcal{W}^{(1/2)}-\mathcal{W}^{(-1/2)}\hadam \boldsymbol{1}_{\Omega}\right\|_F+2\left\|\mathcal{W}^{(-1/2)}\hadam\mathcal{Z}_{\Omega}\right\|_F.
\end{eqnarray*}

Thus, we have that 
\begin{equation}\label{eqn:rkr}
    \left\|\mathcal{W}^{(1/2)}\hadam(\mathcal{T}-\widehat{\mathcal{T}})\right\|_F\leq 2\left\|\mathcal{T}\right\|_{\infty}\left\|\mathcal{W}^{(1/2)}-\mathcal{W}^{(-1/2)}\hadam \boldsymbol{1}_{\Omega}\right\|_F+2\left\|\mathcal{W}^{(-1/2)}\hadam\mathcal{Z}_{\Omega}\right\|_F.
\end{equation}

Next, let's estimate $\left\|\mathcal{W}^{(-1/2)}\hadam\mathcal{Z}_{\Omega}\right\|_F$.
Notice that
\[\left\|\mathcal{W}^{-(1/2)}\hadam\mathcal{Z}_{\Omega}\right\|_F^2=\sum_{(i_1,\cdots,i_n)\in\Omega}\frac{\mathcal{Z}_{i_1\cdots i_n}^2}{\mathcal{W}_{i_1\cdots i_n}}
\]
\begin{eqnarray*}
\mathbb{P}\left\{\left\|\mathcal{W}^{(-1/2)}\hadam\mathcal{Z}_{\Omega}\right\|_F\geq t\right\}&=&\mathbb{P}\left\{e^{s\left\|\mathcal{W}^{(-1/2)}\hadam\mathcal{Z}_{\Omega}\right\|_F^2}\geq e^{st^2}\right\}\\
&\leq&e^{-st^2}\mathbb{E}\left(\exp\left({s\left\|\mathcal{W}^{(-1/2)}\hadam\mathcal{Z}_{\Omega}\right\|_F^2}\right)\right)\\
&\leq&e^{-st^2}\prod\limits_{(i_1,\cdots,i_n)\in\Omega}\mathbb{E}\left(\exp\left({\frac{s\mathcal{Z}_{i_1\cdots i_n}^2}{\mathcal{W}_{i_1\cdots i_n}}}\right)\right)\\
&=&e^{-st^2}\prod\limits_{(i_1,\cdots,i_n)\in\Omega}\left(\frac{1}{\sqrt{1-2\sigma^2s/\mathcal{W}_{i_1\cdots i_n}}}\right)
\end{eqnarray*}

Recall that  $\mu^2=\max_{(i_1,\cdots,i_n)\in\Omega}\frac{1}{\mathcal{W}_{i_1,\cdots,i_n}}$. By choosing $s=\frac{1}{4\sigma^2\mu^2}$, we have that
\begin{eqnarray*}
\mathbb{P}\left\{\left\|\mathcal{W}^{-(1/2)}\circ\mathcal{Z}_{\Omega}\right\|_F\geq t\right\}\leq \exp\left({-\frac{t^2}{4\sigma^2\mu^2}}\right)2^{|\Omega|/2}.
\end{eqnarray*}

We conclude that with probability at least $1-2^{-|\Omega|/2}$, 
\[\left\|\mathcal{W}^{(-1/2)}\circ \mathcal{Z}_{\Omega}\right\|_F\leq 2\sigma \mu\sqrt{|\Omega|\log(2)}.
\]

Plugging this into \eqref{eqn:rkr} proves the theorem.
\end{proof}
\section{Proof of Theorems 2 and 3}\label{section:proofFHOSVD}
%\section{Bounds for weighted HOSVD algorithm}
In this appendix, we provide the proofs for the results related with the weighted HOSVD algorithm. The general upper bound for weighted HOSVD in  Theorem \ref{thm: gubwHOSVD} is restated  in Appendix \ref{appendixB1} and its proof is also presented there. If the sampling pattern $\Omega$ is generated according to the weight tensor $\mathcal{W}$, the related results in Theorem \ref{thm:bfsomega} are illustrated in Appendix \ref{appendixB2}.   %We  provide the proof for the general upper bound for weighted HOSVD algorithm and then we would discuss the related results for the special case when $\Omega\sim\mathcal{W}$.
\subsection{General Upper Bound for Weighted HOSVD Algorithm}\label{appendixB1}
\begin{theorem}\label{thm:ub low-rank tensor}
Let $\mathcal{W}=\boldsymbol{w_1}\out\cdots\out \boldsymbol{w_n}\in\mathbb{R}^{d_1\times\cdots \times d_n}$ have strictly positive entries, and fix $\Omega\subseteq[d_1]\times\cdots\times[d_n]$. Suppose that $\mathcal{T}\in\mathbb{R}^{d_1\times \cdots\times d_n}$ has Tucker rank $\textbf{r}=[r_1~\cdots~ r_n]$. Suppose that $\mathcal{Z}_{i_1\cdots i_n}\sim\mathcal{N}(0,\sigma^2)$ and let 
\[\widehat{\mathcal{T}}=\mathcal{W}^{(-1/2)}\hadam((\mathcal{W}^{(-1/2)}\hadam\mathcal{Y}_\Omega)\times_1 \widehat{U}_1\widehat{U}_1^T\times_2 \cdots\times_n \widehat{U}_n\widehat{U}_n^T)
\]
where $\widehat{U}_1, \cdots, \widehat{U}_n$ are obtained by  HOSVD approximation process, where $\mathcal{Y}_{\Omega}=\boldsymbol{1}_{\Omega}\hadam(\mathcal{T}+\mathcal{Z})$.
Then with probability at least
$1-\sum_{i=1}^{n}\frac{1}{d_i+\prod_{j\neq i}d_j}$
over the choice of $\mathcal{Z}$, 
\begin{align*}\label{uppb}
   & \left\|\mathcal{W}^{(1/2)}\hadam(\mathcal{T}-\widehat{\mathcal{T}})\right\|_F\\
   \leq& \left(\sum_{k=1}^n 6\sqrt{r_k\log(d_k+\prod\limits_{j\neq k}d_j)}\mu_k\right)\sigma
   +\left(\sum_{k=1}^{n}3r_k\left\|(\mathcal{W}^{(-1/2)}\hadam \boldsymbol{1}_{\Omega}-\mathcal{W}^{(1/2)})_{(k)}\right\|_2 \right)\left\|\mathcal{T}\right\|_\infty. 
\end{align*}
where \[\mu_k^2=\max\left\{\max_{i_k}\left(\sum_{i_1,\cdots,i_{k-1},i_{k+1},\cdots,i_n}\frac{1_{(i_1,i_2,\cdots,i_n)\in\Omega}}{\mathcal{W}_{i_1i_2\cdots i_{n}}}\right),\max_{i_1,\cdots,i_{k-1},i_{k+1},\cdots,i_n}\left(\sum_{i_k}\frac{1_{(i_1,i_2,\cdots,i_n)\in\Omega}}{\mathcal{W}_{i_1i_2\cdots i_{n}}}\right) \right\}.\]
\end{theorem}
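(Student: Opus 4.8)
The plan is to reduce the weighted error to a Hadamard-weighted comparison between the projected observation and the weighted ground truth, then control it mode by mode. Write $\mathcal{A}=\mathcal{W}^{(-1/2)}\hadam\mathcal{Y}_\Omega$ and $\mathcal{B}=\mathcal{W}^{(1/2)}\hadam\mathcal{T}$, and let $\pi_k$ denote the mode-$k$ projection $\mathcal{X}\mapsto\mathcal{X}\times_k\widehat{U}_k\widehat{U}_k^T$. Since $\mathcal{W}^{(1/2)}\hadam\mathcal{W}^{(-1/2)}=\boldsymbol{1}$, the definition of $\widehat{\mathcal{T}}$ gives $\mathcal{W}^{(1/2)}\hadam\widehat{\mathcal{T}}=\pi_1\cdots\pi_n\mathcal{A}$, so that $\|\mathcal{W}^{(1/2)}\hadam(\mathcal{T}-\widehat{\mathcal{T}})\|_F=\|\mathcal{B}-\pi_1\cdots\pi_n\mathcal{A}\|_F$. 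First I would record two structural facts. The tensor $\mathcal{B}$ has Tucker rank at most $\boldsymbol{r}$, because $\mathcal{B}_{(k)}=\mathrm{diag}(\boldsymbol{w}_k^{(1/2)})\,\mathcal{T}_{(k)}\,\mathrm{diag}(\bigotimes_{j\neq k}\boldsymbol{w}_j^{(1/2)})$ is $\mathcal{T}_{(k)}$ pre- and post-multiplied by invertible diagonal matrices, hence $\mathrm{rank}(\mathcal{B}_{(k)})\le r_k$. And the error tensor $\mathcal{E}:=\mathcal{A}-\mathcal{B}$ splits as $\mathcal{E}=\mathcal{T}\hadam\mathcal{D}+\mathcal{N}$, where $\mathcal{D}=\mathcal{W}^{(-1/2)}\hadam\boldsymbol{1}_\Omega-\mathcal{W}^{(1/2)}$ is the deterministic bias and $\mathcal{N}=\mathcal{W}^{(-1/2)}\hadam\boldsymbol{1}_\Omega\hadam\mathcal{Z}$ is the noise.

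Next I would split $\mathcal{B}-\pi_1\cdots\pi_n\mathcal{A}=(\mathcal{B}-\pi_1\cdots\pi_n\mathcal{B})-\pi_1\cdots\pi_n\mathcal{E}$ and bound the two pieces. For the first, telescoping over the commuting projections gives $I-\pi_1\cdots\pi_n=\sum_{k=1}^n\pi_1\cdots\pi_{k-1}(I-\pi_k)$, so since each $\pi_j$ is a Frobenius contraction, $\|\mathcal{B}-\pi_1\cdots\pi_n\mathcal{B}\|_F\le\sum_k\|(I-\widehat{U}_k\widehat{U}_k^T)\mathcal{B}_{(k)}\|_F$. Because $\mathcal{B}_{(k)}$ has rank at most $r_k$, the matrix $(I-\widehat{U}_k\widehat{U}_k^T)\mathcal{B}_{(k)}$ also has rank at most $r_k$, so its Frobenius norm is at most $\sqrt{r_k}$ times its spectral norm; and since $\widehat{U}_k$ spans the top-$r_k$ left singular subspace of $\mathcal{A}_{(k)}$, Weyl's inequality together with Eckart–Young (using that $\mathcal{B}_{(k)}$ is a rank-$r_k$ competitor, so $\sigma_{r_k+1}(\mathcal{A}_{(k)})\le\|\mathcal{E}_{(k)}\|_{op}$) yields $\|(I-\widehat{U}_k\widehat{U}_k^T)\mathcal{B}_{(k)}\|_{op}\le 2\|\mathcal{E}_{(k)}\|_{op}$. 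For the residual piece, $\pi_1\cdots\pi_n\mathcal{E}$ has mode-$k$ rank at most $r_k$, whence $\|\pi_1\cdots\pi_n\mathcal{E}\|_F\le\sqrt{r_k}\|\mathcal{E}_{(k)}\|_{op}$ for each $k$ and in particular is dominated by $\sum_k\sqrt{r_k}\|\mathcal{E}_{(k)}\|_{op}$. Combining, the total is at most $\sum_k 3\sqrt{r_k}\|\mathcal{E}_{(k)}\|_{op}$, into which I feed $\|\mathcal{E}_{(k)}\|_{op}\le\|(\mathcal{T}\hadam\mathcal{D})_{(k)}\|_{op}+\|\mathcal{N}_{(k)}\|_{op}$.

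It remains to bound the two unfolded pieces in spectral norm. For the noise, $\mathcal{N}_{(k)}$ has independent mean-zero Gaussian entries with variance $\sigma^2\boldsymbol{1}_\Omega/\mathcal{W}$ at each position; the maximum row-sum and column-sum of these variances equal exactly $\sigma^2\mu_k^2$, so a standard spectral-norm concentration bound for Gaussian matrices with an independent-entry variance profile gives $\|\mathcal{N}_{(k)}\|_{op}\le 2\sigma\mu_k\sqrt{\log(d_k+\prod_{j\neq k}d_j)}$ with probability at least $1-(d_k+\prod_{j\neq k}d_j)^{-1}$; a union bound over $k=1,\dots,n$ yields the stated probability $1-\sum_{i}(d_i+\prod_{j\neq i}d_j)^{-1}$ and the coefficient $3\cdot 2=6$. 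For the bias, I would use that the Hadamard (Schur) multiplier norm coincides with the max-norm of the paper's definition, i.e. $\|\mathcal{T}_{(k)}\hadam\mathcal{D}_{(k)}\|_{op}\le\|\mathcal{T}_{(k)}\|_{max}\|\mathcal{D}_{(k)}\|_{2}$, together with the bound $\|\mathcal{T}_{(k)}\|_{max}\le\sqrt{r_k}\,\|\mathcal{T}_{(k)}\|_\infty=\sqrt{r_k}\,\|\mathcal{T}\|_\infty$ valid for a rank-$r_k$ matrix; the extra $\sqrt{r_k}$ then combines with the $3\sqrt{r_k}$ to give the claimed $3r_k\|\mathcal{T}\|_\infty\|\mathcal{D}_{(k)}\|_{2}$ contribution.

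The main obstacle I anticipate is precisely the bias estimate $\|\mathcal{T}_{(k)}\hadam\mathcal{D}_{(k)}\|_{op}\le\sqrt{r_k}\|\mathcal{T}\|_\infty\|\mathcal{D}_{(k)}\|_{2}$: pulling $\|\mathcal{T}\|_\infty$ out entrywise fails because spectral norm is not monotone under entrywise absolute values, and expanding $\mathcal{T}_{(k)}$ in its SVD and bounding $\sum_l\sigma_l\|u_l\|_\infty\|v_l\|_\infty$ is far too lossy (it produces $\|\mathcal{T}_{(k)}\|_{2}$ or a dimension factor, not $\sqrt{r_k}\|\mathcal{T}\|_\infty$). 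Routing through the Schur-multiplier/max-norm identity is exactly what makes the rank enter as $\sqrt{r_k}$, and this is the reason the paper introduces the max-norm; the delicate step is verifying $\|\cdot\|_{max}\le\sqrt{r_k}\|\cdot\|_\infty$ for rank-$r_k$ matrices, which is tight on flat-spectrum examples such as Hadamard matrices. A secondary technical point is making the Gaussian spectral-norm concentration land at the $\mu_k$ (row/column-sum) scale with only a $\sqrt{\log}$ deviation factor, rather than the cruder $\sqrt{\dim}$ that a plain $\varepsilon$-net over both spheres would give; this requires the sharper variance-profile estimate and careful tracking of constants to achieve the probability $1-(d_k+\prod_{j\neq k}d_j)^{-1}$ per mode.
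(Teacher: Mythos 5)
Your proposal is correct and takes essentially the same route as the paper's own proof: both reduce the error to the per-mode bound $\sum_{k=1}^n 3\sqrt{r_k}\,\big\|\big(\mathcal{W}^{(1/2)}\hadam\mathcal{T}-\mathcal{W}^{(-1/2)}\hadam\mathcal{Y}_\Omega\big)_{(k)}\big\|_2$ via Eckart--Young/Weyl plus rank-based Frobenius-to-spectral conversion (your split into $(I-\pi_1\cdots\pi_n)\mathcal{B}$ and $\pi_1\cdots\pi_n\mathcal{E}$ is only a bookkeeping variant of the paper's telescoping between the estimated and exact projection chains, and yields the same constant $3$). From there the two arguments coincide: the bias term is handled by the Schur-multiplier/max-norm bound with $\|\mathcal{T}_{(k)}\|_{max}\le\sqrt{r_k}\|\mathcal{T}\|_\infty$, and the noise term by the matrix Gaussian series concentration at variance scale $\mu_k$ with the same union bound and probability.
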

\begin{proof}
%Let $\mathcal{Y}_\Omega=\boldsymbol{1}_{\Omega}\hadam(\mathcal{T}+\mathcal{Z})$. 
Recall that $\mathcal{T}_{\Omega}=\boldsymbol{1}_{\Omega}\hadam\mathcal{T}$ and $\mathcal{Z}_{\Omega}=\boldsymbol{1}_{\Omega}\hadam\mathcal{Z}$.
First  we have the following estimations.
\begin{eqnarray*}
&&\left\|\mathcal{W}^{(1/2)}\hadam\left(\widehat{\mathcal T}-\mathcal{T}\right)\right\|_F\\
&=&\left\|\left(\mathcal{W}^{(-1/2)}\hadam\mathcal{Y}_\Omega\right)\times_1\widehat{U}_1\widehat{U}_1^T\times_2\cdots\times_n\widehat{U}_n\widehat{U}_n^T-\left(\mathcal{W}^{(1/2)}\hadam\mathcal{T}\right)\times_1U_1U_1^T\times_2\cdots \times_nU_nU_n^T\right\|_F\\
&\leq&\left\|\left((\mathcal{W}^{(-1/2)}\hadam\mathcal{Y}_\Omega)\times_1 \widehat{U}_1\widehat{U}_1^T-(\mathcal{W}^{(1/2)}\hadam \mathcal{T})\times_1U_1U_1^T\right)\times_2 \widehat{U}_2\widehat{U}_2^T\times_3\cdots\times_n\widehat{U}_n\widehat{U}_n^T\right\|_F\\
&&+\left\|\left(\mathcal{W}^{(1/2)}\hadam\mathcal{T}\right)\left(\times_2U_2U_2^T\times_3\cdots\times_nU_nU_n^T-\times_2\widehat{U}_2\widehat{U}_2^T\times_3\cdots\times_n\widehat{U}_n\widehat{U}_n^T\right)\right\|_F\\
&\leq&\sqrt{2r_1}\left\|\widehat{U}_1\widehat{U}_1^T(\mathcal{W}^{(-1/2)}\hadam\mathcal{Y}_\Omega)_{(1)}-U_1U_1^T(\mathcal{W}^{(1/2)}\hadam \mathcal{T})_{(1)}\right\|_2+\sum_{k=2}^{n}\left\|(\mathcal{W}^{(1/2)}\hadam\mathcal{T})\right.\\
&&\left.\times_2 \widehat{U}_2\widehat{U}_2^T\times_3\cdots\times_{k-1}\widehat{U}_{k-1}\widehat{U}_{k-1}^T\times_k(U_kU_k^T-\widehat{U}_k\widehat{U}_k^T)\times_{k+1}U_{k+1}U_{k+1}^T\times_{k+2}\cdots\times_n U_nU_n^T\right\|_F\\
&\leq&\sqrt{2r_1}\left\|\widehat{U}_1\widehat{U}_1^T(\mathcal{W}^{(-1/2)}\hadam\mathcal{Y}_\Omega)_{(1)}-(\mathcal{W}^{(1/2)}\hadam \mathcal{T})_{(1)}\right\|_2+\sum_{k=2}^n\sqrt{r_k}\left\|(U_kU_k^T-\widehat{U}_k\widehat{U}_k^T)(\mathcal{W}^{(1/2)}\hadam\mathcal{T})_{(k)}\right\|_2\\
&\leq&\sqrt{2r_1}\left(\left\|\widehat{U}_1\widehat{U}_1^T(\mathcal{W}^{(-1/2)}\hadam\mathcal{Y}_\Omega)_{(1)}-(\mathcal{W}^{(-1/2)}\hadam\mathcal{Y}_\Omega)_{(1)}\right\|_2+\left\|(\mathcal{W}^{(-1/2)}\hadam\mathcal{Y}_\Omega)_{(1)}-(\mathcal{W}^{(1/2)}\hadam \mathcal{T})_{(1)}\right\|_2\right)\\
&&+\sum_{k=2}^n\sqrt{r_k}\left\|(U_kU_k^T-\widehat{U}_k\widehat{U}_k^T)(\mathcal{W}^{(1/2)}\hadam\mathcal{T})_{(k)}\right\|_2\\
&\leq&2\sqrt{2r_1}\left\|(\mathcal{W}^{(-1/2)}\hadam\mathcal{Y}_\Omega)_{(1)}-(\mathcal{W}^{(1/2)}\hadam \mathcal{T})_{(1)}\right\|_2+\sum_{k=2}^n\sqrt{r_k}\left\|(U_kU_k^T-\widehat{U}_k\widehat{U}_k^T)(\mathcal{W}^{(1/2)}\hadam\mathcal{T})_{(k)}\right\|_2.
%\\
%&\leq&3\sqrt{r_1}\left\|(\mathcal{W}^{(-1/2)}\hadam\mathcal{Y}_\Omega)_{(1)}-(\mathcal{W}^{(1/2)}\hadam \mathcal{T})_{(1)}\right\|_2+\sum_{k=2}^n\sqrt{r_k}\left\|(U_kU_k^T-\widehat{U}_k\widehat{U}_k^T)(\mathcal{W}^{(1/2)}\hadam\mathcal{T})_{(k)}\right\|_2.
\end{eqnarray*}
Notice that
\begin{eqnarray*}
&&\left\|\left(U_kU_k^T-\widehat{U}_k\widehat{U}_k^T\right)(\mathcal{W}^{(1/2)}\hadam\mathcal{T})_{(k)}\right\|_2\\
&=&\left\|(\mathcal{W}^{(1/2)}\hadam\mathcal{T})_{(k)}-\widehat{U}_k\widehat{U}_k^T(\mathcal{W}^{(1/2)}\hadam\mathcal{T})_{(k)}\right\|_2\\
&\leq&\left\|(\mathcal{W}^{(1/2)}\hadam\mathcal{T})_{(k)}-(\mathcal{W}^{(-1/2)}\hadam\mathcal{Y}_\Omega)_{(k)}\right\|_2+\left\|\widehat{U}_k\widehat{U}_k^T(\mathcal{W}^{(1/2)}\hadam\mathcal{T}-\mathcal{W}^{(-1/2)}\hadam\mathcal{Y}_\Omega)_{(k)}\right\|_2+\\
&&\left\|(\mathcal{W}^{(-1/2)}\hadam\mathcal{Y}_\Omega)_{(k)}-\widehat{U}_k\widehat{U}_k^T(\mathcal{W}^{(-1/2)}\hadam\mathcal{Y}_{\Omega})_{(k)}\right\|_2\\
&\leq&3\left\|(\mathcal{W}^{(1/2)}\hadam\mathcal{T})_{(k)}-(\mathcal{W}^{(-1/2)}\hadam\mathcal{Y}_\Omega)_{(k)}\right\|_2.
\end{eqnarray*}
Therefore, we have
\begin{eqnarray}\label{eqn:ube}
&&\left\|\mathcal{W}^{(1/2)}\hadam(\widehat{\mathcal{T}}-\mathcal{T})\right\|_F\leq\sum_{k=1}^n3\sqrt{r_k}\left\|(\mathcal{W}^{(1/2)}\hadam\mathcal{T})_{(k)}-(\mathcal{W}^{(-1/2)}\hadam\mathcal{Y}_\Omega)_{(k)}\right\|_2.
\end{eqnarray}

Next, to estimate  $\left\|(\mathcal{W}^{(-1/2)}\hadam\mathcal{Y}_\Omega-\mathcal{W}^{(1/2)}\hadam \mathcal{T})_{(k)}\right\|_2$ for $k=1,\cdots,n$.

Let us  consider the case when $k=1$. Other cases can be derived similarly. Using the fact that $\mathcal{T}_{(1)}$ has rank $r_1$ and $\left\|\mathcal{T}_{(1)}\right\|_{\max}\leq\sqrt{r_1}\left\|\mathcal{T}_{(1)}\right\|_\infty=\sqrt{r_1}\left\|\mathcal{T}\right\|_\infty$, we conclude that
\begin{eqnarray*}
&&\left\|(\mathcal{W}^{(-1/2)}\hadam\mathcal{Y}_\Omega-\mathcal{W}^{(1/2)}\hadam \mathcal{T})_{(1)}\right\|_2\\
&=&\left\|(\mathcal{W}^{(-1/2)}\hadam \mathcal{T}_{\Omega}-\mathcal{W}^{(1/2)}\hadam \mathcal{T})_{(1)}+(\mathcal{W}^{(-1/2)} \hadam\mathcal{Z}_{\Omega})_{(1)}\right\|_2\\
&\leq&\left\|(\mathcal{W}^{(-1/2)}\hadam\mathcal{\mathcal{T}}_\Omega-\mathcal{W}^{(1/2)}\hadam \mathcal{T})_{(1)}\right\|_2+\left\|(\mathcal{W}^{(-1/2)}\hadam\mathcal{Z}_{\Omega})_{(1)}\right\|_2\\
&=&\left\|(\mathcal{W}^{(-1/2)}\hadam \boldsymbol{1}_{\Omega}-\mathcal{W}^{(1/2)})_{(1)}\hadam\mathcal{T}_{(1)}\right\|_2+\left\|(\mathcal{W}^{(-1/2)}\hadam\mathcal{Z}_{\Omega})_{(1)}\right\|_2\\
&\leq&\left\|\mathcal{T}_{(1)}\right\|_{\max}\left\|(\mathcal{W}^{(-1/2)}\hadam \boldsymbol{1}_{\Omega}-\mathcal{W}^{(1/2)})_{(1)}\right\|_2+\left\|(\mathcal{W}^{(-1/2)}\hadam\mathcal{Z}_{\Omega})_{(1)}\right\|_2\\
&\leq&\sqrt{r_1}\left\|\mathcal{T}\right\|_{\infty}\left\|(\mathcal{W}^{(-1/2)}\hadam \boldsymbol{1}_{\Omega}-\mathcal{W}^{(1/2)})_{(1)}\right\|_2+\left\|(\mathcal{W}^{(-1/2)}\hadam\mathcal{Z}_{\Omega})_{(1)}\right\|_2.
\end{eqnarray*}

To bound $\left\|(\mathcal{W}^{(-1/2)}\hadam\mathcal{Z}_{\Omega})_{(1)}\right\|_2$, we consider
\[(\mathcal{W}^{(-1/2)}\hadam\mathcal{Z}_{\Omega})_{(1)}=\sum_{i_1,\cdots,i_n}\frac{1_{(i_1,\cdots,i_n)\in\Omega}\mathcal{Z}_{i_1\cdots i_n}}{\sqrt{\mathcal{W}_{i_1\cdots i_n}}}\boldsymbol{e_{i_1}}(\boldsymbol{e_{i_2}}\kron\cdots\kron \boldsymbol{e_{i_n}})^T,
\]
where $\boldsymbol{e_{i_k}}$ is the $i_k$-th standard basis vector of $\mathbb{R}^{d_k}$.

Please note that
\begin{eqnarray*}
&&\sum_{i_1,\cdots,i_n}\frac{1_{(i_1,\cdots,i_n)\in\Omega}}{\mathcal{W}_{i_1\cdots i_n}}\boldsymbol{e_{i_1}}(\boldsymbol{e_{i_2}}\kron \cdots\kron \boldsymbol{e_{i_n}})^T(\boldsymbol{e_{i_2}}\kron  \cdots\kron \boldsymbol{e_{i_n}})\boldsymbol{\boldsymbol{e_{i_1}}}^T\\
&=&\sum_{i_1,\cdots,i_n}\frac{1_{(i_1,\cdots,i_n)\in\Omega}}{\mathcal{W}_{i_1\cdots i_n}}\boldsymbol{\boldsymbol{e_{i_1}}}\boldsymbol{\boldsymbol{e_{i_1}}}^T.
\end{eqnarray*}

Therefore,
\begin{eqnarray*}
&&\left\|\sum_{i_1,\cdots,i_n}\frac{1_{(i_1,\cdots,i_n)\in\Omega}}{\mathcal{W}_{i_1\cdots i_n}}\boldsymbol{\boldsymbol{e_{i_1}}}(\boldsymbol{e_{i_2}}\kron\cdots\kron \boldsymbol{e_{i_n}})^T(\boldsymbol{e_{i_2}}\kron\cdots\kron \boldsymbol{e_{i_n}})\boldsymbol{\boldsymbol{e_{i_1}}}^T\right\|_2\\
&=&\max_{i_1}\sum_{i_2,\cdots,i_n}\frac{1_{(i_1,i_2,\cdots,i_n)\in\Omega}}{\mathcal{W}_{i_1i_2\cdots i_n}}\leq\mu_1^2.
\end{eqnarray*}

Similarly,
\begin{eqnarray*}
&&\left\|\sum_{i_1,\cdots,i_n}\frac{1_{(i_1,\cdots,i_n)\in\Omega}}{\mathcal{W}_{i_1\cdots i_n}}(\boldsymbol{e_{i_2}}\kron \cdots\kron \boldsymbol{e_{i_n}})\boldsymbol{\boldsymbol{e_{i_1}}}^T\boldsymbol{\boldsymbol{e_{i_1}}}(\boldsymbol{e_{i_2}}\kron \cdots\kron \boldsymbol{e_{i_n}})^T\right\|_2\\
&=&\max_{i_2,\cdots,i_n}\sum_{i_1}\frac{1_{(i_1,i_2,\cdots,i_n)\in\Omega}}{\mathcal{W}_{i_1i_2\cdots i_n}}\leq\mu_1^2.
\end{eqnarray*}

By  (\cite{tropp2012user} Theorem 1.5), for any $t>0$,
\begin{eqnarray*}\mathbb{P}\left\{ \left\|(\mathcal{W}^{(-1/2)}\hadam\mathcal{Z}_{\Omega})_{(1)}\right\|\geq t \right\}\leq\left(d_1+\prod\limits_{j\neq 1}d_j \right)\exp\left(-\frac{t^2}{2\sigma^2\mu_1^2}\right).
\end{eqnarray*}

We conclude that with probability at least $1-\frac{1}{d_1+\prod_{j\neq 1}d_j}$, we have
\[ \left\|(\mathcal{W}^{(-1/2)}\hadam\mathcal{Z}_{\Omega})_{(1)}\right\|\leq 2\sigma\mu_1\sqrt{\log(d_1+\prod\limits_{j\neq 1}d_j)}.
\]

Similarly, we have
\begin{eqnarray*}
&&\left\|(\mathcal{W}^{(-1/2)}\hadam\mathcal{Y}_\Omega-\mathcal{W}^{(1/2)}\hadam \mathcal{T})_{(k)}\right\|_2\\
&\leq&\sqrt{r_k}\left\|\mathcal{T}\right\|_{\infty}\left\|(\mathcal{W}^{(-1/2)}\hadam \boldsymbol{1}_{\Omega}-\mathcal{W}^{(1/2)})_{(k)}\right\|_2+\left\|(\mathcal{W}^{(-1/2)}\hadam\mathcal{Z}_{\Omega})_{(k)}\right\|_2,
\end{eqnarray*}
with 
\[ \left\|(\mathcal{W}^{(-1/2)}\hadam\mathcal{Z}_{\Omega})_{(k)}\right\|_2\leq 2\sigma\mu_k\sqrt{\log(d_k+\prod\limits_{j\neq k}d_j)} 
\]with probability at least $1-\frac{1}{d_k+\prod_{j\neq k}d_j}$, for $k=2,\cdots,n$.

Plugging all these into \eqref{eqn:ube}, we can obtain the bound in our theorem.
\end{proof}

Next we are going to study the special case when the sampling set $\Omega\sim\mathcal{W}$.

\subsection{Case Study: \texorpdfstring{$\Omega\sim\mathcal{W}$}{Lg}}
\label{appendixB2} 
In this section, we would provide upper and lower bounds for the weighted\linebreak \mbox{HOSVD~algorithm.}
\subsubsection{Upper Bound }
First, let us understand the bounds $\lambda_{\ell}$ and $\mu_{\ell}$ in the case when $\Omega\sim\mathcal{W}$ for  $\ell=1,\cdots,n$.
\begin{Lemma}\label{lmm:ub for lmb_mu}
Let $\mathcal{W}=\boldsymbol{w_1}\out \cdots\out \boldsymbol{w_n}\in\mathbb{R}^{d_1\times \cdots\times d_n}$ be a CP rank-1 tensor so that all $(i_1,\cdots,i_n)\in[d_1]\times\cdots\times [d_n]$ with $\mathcal{W}_{i_1\cdots i_n}\in\left[\frac{1}{\sqrt{\prod_{j=1}^{n}d_j}},1\right]$. Suppose that $\Omega\subseteq[d_1]\times\cdots\times[d_n]$ so that for each $i_1\in[d_1], \cdots, i_n\in[d_n]$, $(i_1,\cdots,i_n)\in\Omega$ with probability $\mathcal{W}_{i_1\cdots i_n}$, independently for each $(i_1,\cdots,i_n)$. Then with probability at least $1-\sum_{\ell=1}^{n}\frac{2}{d_\ell+\prod_{j\neq \ell}d_j}$ 
over the choice of $\Omega$, we have for $\ell=1,\cdots,n$
\begin{equation}\label{eqn:lambda}
\lambda_\ell=\left\|(\mathcal W^{(1/2)}-\mathcal W^{(-1/2)}\hadam \boldsymbol{1}_\Omega)_{(\ell)}\right\|_2\leq2\sqrt{d_\ell+\prod\limits_{k\neq\ell}d_k}\log\left(d_\ell+\prod\limits_{k\neq\ell}d_k\right), 
\end{equation}
and
\begin{equation}\label{eqn:mu}
\mu_\ell\leq2\sqrt{\left(d_\ell+\prod\limits_{k\neq\ell}d_k\right)\log\left(d_\ell+\prod\limits_{k\neq\ell}d_k\right)}.
\end{equation}
\end{Lemma}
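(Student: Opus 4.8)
The plan is to derive both bounds from Bernstein-type concentration, exploiting that every entry of the random tensor $\mathcal{W}^{(-1/2)}\hadam\boldsymbol{1}_\Omega$ is an independent scaled Bernoulli variable. Throughout I write $P=\prod_{j=1}^n d_j$ and $D_\ell=d_\ell+\prod_{k\neq\ell}d_k$, and I record the two elementary facts that do all the work: the hypothesis $\mathcal{W}_{i_1\cdots i_n}\geq P^{-1/2}$ forces $\mathcal{W}_i^{-1}\leq\sqrt P$ and $\mathcal{W}_i^{-1/2}\leq P^{1/4}$, while AM--GM gives $D_\ell\geq 2\sqrt P$, so that $P^{1/4}\leq\sqrt{D_\ell/2}$. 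This last inequality is precisely what keeps the worst-case size of a single summand comparable to $\sqrt{D_\ell}$.

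For \eqref{eqn:lambda} I would first note that the $(i_1,\cdots,i_n)$ entry of $\mathcal{W}^{(1/2)}-\mathcal{W}^{(-1/2)}\hadam\boldsymbol{1}_\Omega$ equals $\mathcal{W}_i^{-1/2}(\mathcal{W}_i-1_{(i)\in\Omega})$, which has mean zero since $\Omega\sim\mathcal{W}$. Unfolding along mode $\ell$, I write $(\mathcal{W}^{(1/2)}-\mathcal{W}^{(-1/2)}\hadam\boldsymbol{1}_\Omega)_{(\ell)}=\sum_{i}\mathcal{W}_i^{-1/2}(\mathcal{W}_i-1_{(i)\in\Omega})\,\boldsymbol{e}_{i_\ell}(\boldsymbol{e}_{i_1}\kron\cdots\kron\boldsymbol{e}_{i_n})^T$ (with the index $i_\ell$ omitted from the Kronecker factor), a sum of independent, mean-zero, single-entry matrices, and apply matrix Bernstein (\cite{tropp2012user}, Theorem~1.5) exactly as in the proof of Theorem~\ref{thm:ub low-rank tensor}. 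The uniform bound is $M=\max_i|\mathcal{W}_i^{-1/2}(\mathcal{W}_i-1_{(i)\in\Omega})|\leq P^{1/4}\leq\sqrt{D_\ell/2}$ (the sampled case dominates), and since $\mathbb{E}[\mathcal{W}_i^{-1}(\mathcal{W}_i-1_{(i)\in\Omega})^2]=1-\mathcal{W}_i\leq 1$, both variance proxies reduce to diagonal matrices whose entries are at most $\max\{\prod_{k\neq\ell}d_k,\,d_\ell\}\leq D_\ell$. Feeding $t=2\sqrt{D_\ell}\log D_\ell$ into Bernstein, the term $Mt/3$ dominates the variance proxy $\sigma^2\leq D_\ell$, the exponent is at least $2\log D_\ell$ once $D_\ell$ exceeds a small absolute constant, and the prefactor $d_\ell+\prod_{k\neq\ell}d_k=D_\ell$ yields failure probability $\leq 1/D_\ell$.

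For \eqref{eqn:mu} I would recall that by its definition $\mu_\ell^2$ is the larger of the maximal row sum $\max_{i_\ell}\sum_{\text{rest}}\mathcal{W}_i^{-1}1_{(i)\in\Omega}$ and the maximal column sum $\max_{\text{rest}}\sum_{i_\ell}\mathcal{W}_i^{-1}1_{(i)\in\Omega}$. Each such sum consists of independent nonnegative terms $\mathcal{W}_i^{-1}1_{(i)\in\Omega}$ of mean $1$, so its expectation equals its number of summands, at most $D_\ell$. I would bound each fixed sum by scalar Bernstein with per-term size $M\leq\sqrt P\leq D_\ell/2$ and variance at most $\sqrt P\cdot(\text{number of terms})\leq D_\ell^2/2$; since reaching the target level $4D_\ell\log D_\ell$ requires a deviation $t\gtrsim D_\ell\log D_\ell$, the linear term $Mt/3$ again dominates the variance and forces the exponent above $2\log D_\ell$. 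A union bound over the at most $D_\ell$ rows and $D_\ell$ columns keeps the failure probability below $1/D_\ell$, giving $\mu_\ell\leq 2\sqrt{D_\ell\log D_\ell}$.

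A final union bound over the modes $\ell=1,\dots,n$ and over the two events per mode produces the stated probability $1-\sum_{\ell=1}^n \frac{2}{d_\ell+\prod_{k\neq\ell}d_k}$. The main obstacle is entirely in the bookkeeping of the Bernstein parameters: the per-entry weights $\mathcal{W}_i^{-1}$ can be as large as $\sqrt P$, so the variance proxies for \eqref{eqn:mu} are genuinely large, and one must notice that because the required deviation sits a logarithmic factor above the mean it is the Bernstein tail term $Mt$, not the variance, that governs the exponent — and that $P^{1/4}\leq\sqrt{D_\ell/2}$ is exactly what makes that term affordable. Matching the clean constant $2$ in both displays needs $D_\ell$ bounded away from $1$, which may be assumed since small $D_\ell$ are absorbed into the universal constants.
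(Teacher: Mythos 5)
Your proposal is correct and follows essentially the same route as the paper's own proof: matrix Bernstein applied to the mode-$\ell$ unfolding written as a sum of single-entry mean-zero matrices (with the same variance proxies $1-\mathcal{W}_{i_1\cdots i_n}\le 1$ and the same AM--GM step $P^{1/4}\le\sqrt{(d_\ell+\prod_{k\neq\ell}d_k)/2}$ for the uniform bound), scalar Bernstein plus a union bound over rows and columns for $\mu_\ell$, and a final union bound over modes. The only differences are cosmetic (choice of $t$, explicit acknowledgment that $D_\ell$ must exceed a small absolute constant, which the paper also implicitly assumes).
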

\begin{proof}
Fix $i_1\in[d_1]$. Bernstein's inequality yields
\begin{eqnarray*}
&&\mathbb{P}\left\{\sum\limits_{i_2,\cdots,i_n}\frac{1_{(i_1,\cdots,i_n)\in\Omega}}{\boldsymbol{w_{1}}(i_1)\cdots \boldsymbol{w_n}(i_n)}-\prod\limits_{k\neq1}d_k\geq t \right\}\\
&\leq&\exp\left(\frac{-t^2/2}{\sum\limits_{i_2,\cdots,i_n}\left(\frac{1}{\boldsymbol{w_{1}}(i_1)\cdots \boldsymbol{w_n}(i_n)}-1\right)+\frac{1}{3}\sqrt{\prod\limits_{k=1}^n d_k}t }\right).
\end{eqnarray*}
and 
\begin{eqnarray*}
&&\mathbb{P}\left\{\sum_{i_1}\frac{1_{(i_1,\cdots,i_n)\in\Omega}}{\boldsymbol{w_{1}}(i_1)\cdots \boldsymbol{w_n}(i_n)}-d_1\geq t \right\}\\
&\leq&\exp\left(\frac{-t^2/2}{\sum\limits_{i_1}\left(1/(\boldsymbol{w_{1}}(i_1)\cdots \boldsymbol{w_n}(i_n))-1\right)+\frac{1}{3}\sqrt{\prod\limits_{k=1}^n d_k}t }\right).
\end{eqnarray*}

Set $t=2\sqrt{2}(d_1+\prod\limits_{j\neq 1}d_j)\log(d_1+\prod\limits_{j\neq 1}d_j)$, then we have
\begin{eqnarray*}
&&\mathbb{P}\left\{\sum_{i_2,\cdots,i_n}\frac{1_{(i_1,i_2,\cdots,i_n)\in\Omega}}{\boldsymbol{w_{1}}(i_1)\cdots \boldsymbol{w_n}(i_n)}-\prod\limits_{k\neq 1}d_k\geq 2\sqrt{2}\left(d_1+\prod\limits_{j\neq 1}d_j\right)\log\left(d_1+\prod\limits_{j\neq 1}d_j\right) \right\}\\
&\leq& 1\Bigg/ \left(d_1+\prod\limits_{j\neq 1}d_j\right)^2
\end{eqnarray*}
 and 
 \begin{eqnarray*}
&&\mathbb{P}\left\{\sum_{i_1}\frac{1_{(i_1,i_2,\cdots,i_n)\in\Omega}}{\boldsymbol{w}_{1}(i_1)\boldsymbol{w}_{2}(i_2)\cdots \boldsymbol{w_n}(i_n)}-d_1\geq 2\sqrt{2}\left(d_1+\prod\limits_{j\neq 1}d_j\right)\log\left(d_1+\prod\limits_{j\neq 1}d_j\right) \right\}\\
&\leq&1\Bigg/\left(d_1+\prod_{j\neq 1}d_j\right)^2.
\end{eqnarray*}

Hence, by taking a union bound,
\begin{eqnarray*}
&&\mathbb{P}\left\{\max\left\{\max_{i_1}\sum_{i_2,\cdots,i_n}\frac{1_{(i_1,i_2,\cdots,i_n)\in\Omega}}{\boldsymbol{w}_{1}(i_1)\boldsymbol{w}_{2}(i_2)\cdots \boldsymbol{w_n}(i_n)},\max_{i_2,\cdots,i_n}\sum_{i_1}\frac{1_{(i_1,i_2,\cdots,i_n)\in\Omega}}{\boldsymbol{w}_{1}(i_1)\boldsymbol{w}_{2}(i_2)\cdots \boldsymbol{w_n}(i_n)}\right\}\right.\\
&&\left.\geq 4\left(d_1+\prod\limits_{j\neq 1}d_j\right)\log\left(d_1+\prod\limits_{j\neq 1}d_j\right) \right\}
\leq %\left(d_1+\prod\limits_{j\neq 1}d_j\right)\frac{1}{\left(d_1+\prod\limits_{j\neq 1}d_j\right)^2}=
\frac{1}{d_1+\prod\limits_{j\neq 1}d_j}.
\end{eqnarray*}

Similarly, we have
\begin{eqnarray*}
\mathbb{P}\left\{\mu_k^2\geq 4\left(d_k+\prod\limits_{j\neq k}d_j\right)\log\left(d_k+\prod\limits_{j\neq k}d_j\right) \right\}&\leq&\frac{1}{d_k+\prod\limits_{j\neq k}d_j}, \text{ for all }k=2,\cdots,n.
\end{eqnarray*} 

Combining all these inequalities above, with probability at least $1-\sum_{\ell=1}^{n}\frac{1}{d_\ell+\prod_{j\neq \ell}d_j}$, \mbox{we have}
\[\mu_\ell\leq 2\sqrt{\left(d_\ell+\prod\limits_{k\neq\ell}d_k\right)\log\left(d_\ell+\prod\limits_{k\neq\ell}d_k\right)}, \text{ for all }\ell=1,\cdots,n.
\]

Next we would bound $\lambda_\ell$ in \eqref{eqn:lambda}. First of all, let's consider 
$\|(\mathcal{W}^{(1/2)}-\mathcal{W}^{(-1/2)}\hadam \boldsymbol{1}_\Omega)_{(1)}\|_2$. Set $\gamma_{i_1\cdots i_n}=\frac{\mathcal{W}_{i_1\cdots i_n}-1_{(i_1,\cdots,i_n)\in\Omega}}{\sqrt{\mathcal{W}_{i_1\cdots i_n}}}$. Then 
\[\left(\mathcal{W}^{(1/2)}-\mathcal{W}^{(-1/2)}\hadam \boldsymbol{1}_{\Omega}\right)_{(1)}=\sum_{i_1,\cdots,i_n}\gamma_{i_1\cdots i_n}\boldsymbol{\boldsymbol{e_{i_1}}}(\boldsymbol{e_{i_2}}\kron\cdots\kron \boldsymbol{e_{i_n}})^T.
\]

Notice that
\begin{eqnarray*}
&&\sum_{i_1,\cdots,i_n}\mathbb{E}\left(\gamma_{i_1\cdots i_n}^2\boldsymbol{\boldsymbol{e_{i_1}}}(\boldsymbol{e_{i_2}}\kron\cdots\kron \boldsymbol{e_{i_n}})^T(\boldsymbol{e_{i_2}}\kron\cdots\kron \boldsymbol{e_{i_n}})\boldsymbol{\boldsymbol{e_{i_1}}}^T\right)\\
&=&\sum_{i_1}\left(\sum_{i_2,\cdots,i_n}\mathbb{E}(\gamma_{i_1\cdots i_n}^2)\right)\boldsymbol{\boldsymbol{e_{i_1}}}\boldsymbol{\boldsymbol{e_{i_1}}}^T.
\end{eqnarray*}

Since $\mathbb{E}(\gamma_{i_1\cdots i_n}^2)=1-\mathcal{W}_{i_1\cdots i_n}\leq 1-\frac{1}{\sqrt{d_1\cdots d_n}}\leq 1$, then \[\left\|\sum_{i_1,\cdots,i_n}\mathbb{E}(\gamma_{i_1\cdots i_n}^2\boldsymbol{e_{i_1}}(\boldsymbol{e_{i_2}}\kron\cdots\kron \boldsymbol{e_{i_n}})^T(\boldsymbol{e_{i_2}}\kron\cdots\kron \boldsymbol{e_{i_n}})\boldsymbol{e_{i_1}}^T)\right\|_2\leq \prod\limits_{j\neq 1}d_j.
\]

Similarly, \[\left\|\sum_{i_1,\cdots,i_n}\mathbb{E}(\gamma_{i_1\cdots i_n}^2(\boldsymbol{e_{i_2}}\kron\cdots\kron \boldsymbol{e_{i_n}})\boldsymbol{e_{i_1}}^T\boldsymbol{e_{i_1}}(\boldsymbol{e_{i_2}}\kron\cdots\kron \boldsymbol{e_{i_n}})^T)\right\|_2\leq d_1.\]

In addition,
\[\left\|\gamma_{i_1\cdots i_n}\boldsymbol{e_{i_1}}(\boldsymbol{e_{i_2}}\kron\cdots\kron \boldsymbol{e_{i_n}})^T\right\|_2\leq\left(\prod\limits_{j=1}^n d_j\right)^{1/4}\leq\sqrt{\frac{d_1+\prod_{j\neq 1}d_j}{2}}.
\]

Then,  the matrix Bernstein Inequality (\cite{tropp2012user} Theorem 1.4) gives
\begin{align*}
&~\mathbb{P}\left\{ \left\|\left(\mathcal{W}^{(1/2)}-\mathcal{W}^{(-1/2)}\hadam \boldsymbol{1}_\Omega\right)_{(1)}\right\|_2\geq t\right\}\\
\leq&~\left(d_1+\prod\limits_{j\neq 1}d_j\right)\exp\left(-\frac{t^2/2}{\left(d_1+\prod\limits_{j\neq 1}d_j\right)+\frac{t}{3}\sqrt{\left(d_1+\prod\limits_{j\neq 1}d_j\right)\Bigg/2} } \right).
\end{align*}

Let $t=2\sqrt{d_1+\prod_{j\neq 1}d_j}\log\left(d_1+\prod_{j\neq 1}d_j\right)$, then we have
\begin{align*}
\mathbb{P}\left\{ \left\|\left(\mathcal{W}^{(1/2)}-\mathcal{W}^{(-1/2)}\hadam \boldsymbol{1}_{\Omega}\right)_{(1)}\right\|_2\geq 2\sqrt{d_1+\prod\limits_{j\neq 1}d_j}\log\left(d_1+\prod\limits_{j\neq 1}d_j\right)\right\}\leq\frac{1}{d_1+\prod\limits_{j\neq 1}d_j}.
\end{align*}

Similarly, \begin{align*}
\mathbb{P}\left\{ \left\|\left(\mathcal{W}^{(1/2)}-\mathcal{W}^{\left(-1/2\right)}\hadam \boldsymbol{1}_{\Omega}\right)_{(k)}\right\|_2 
\geq 2\sqrt{d_k+\prod_{j\neq k}d_k}\log\left(d_k+\prod\limits_{j\neq k}d_j\right)\right\}\leq\frac{1}{d_k+\prod\limits_{j\neq k}d_j},
\end{align*}
for all $k=2,\cdots,n$.

Thus, with probability at least $1-\sum_{\ell=1}^n\frac{1}{d_\ell+\prod_{j\neq \ell}d_j}$, we have
\[
\left\|(\mathcal W^{(1/2)}-\mathcal W^{(-1/2)}\hadam \boldsymbol{1}_{\Omega})_{(\ell)}\right\|_2\leq2\sqrt{d_\ell+\prod\limits_{k\neq\ell}d_k}\log\left(d_\ell+\prod\limits_{k\neq\ell}d_k\right) , \text{ for all } \ell=1,\cdots,n.
\]

By a union of bounds in \eqref{eqn:mu} and \eqref{eqn:lambda}, we could establish the lemma. 
\end{proof}

\begin{Lemma}\label{lmm:est_omega}
Let $m=\left\|\mathcal{W}^{(1/2)}\right\|_F^2$. Then with probability at least $1-2\exp(-3m/104)$, over the choice of $\Omega$
\[||\Omega|-m|\leq \frac{m}{4}.
\]
\end{Lemma}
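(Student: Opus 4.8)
The plan is to recognize that $|\Omega|$ is a sum of independent indicator variables whose mean is exactly $m$, after which a single application of Bernstein's inequality delivers the claim.

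First I would write $|\Omega|=\sum_{i_1,\cdots,i_n}1_{(i_1,\cdots,i_n)\in\Omega}$, a sum of independent Bernoulli variables since, by the assumption $\Omega\sim\mathcal{W}$, each $(i_1,\cdots,i_n)$ lies in $\Omega$ with probability $\mathcal{W}_{i_1\cdots i_n}$ independently. Taking expectations and using that $\mathcal{W}^{(1/2)}$ squares entrywise back to $\mathcal{W}$ gives
\[\mathbb{E}|\Omega|=\sum_{i_1,\cdots,i_n}\mathcal{W}_{i_1\cdots i_n}=\left\|\mathcal{W}^{(1/2)}\right\|_F^2=m,\]
so the lemma is precisely a concentration statement for $|\Omega|$ about its own mean.

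Next I would center the summands, setting $X_{i_1\cdots i_n}=1_{(i_1,\cdots,i_n)\in\Omega}-\mathcal{W}_{i_1\cdots i_n}$, so that $|\Omega|-m=\sum_{i_1,\cdots,i_n}X_{i_1\cdots i_n}$ is a sum of independent, mean-zero variables obeying $|X_{i_1\cdots i_n}|\leq 1$ and $\mathbb{E}X_{i_1\cdots i_n}^2=\mathcal{W}_{i_1\cdots i_n}(1-\mathcal{W}_{i_1\cdots i_n})\leq\mathcal{W}_{i_1\cdots i_n}$; summing these variances yields a variance proxy bounded by $m$. Bernstein's inequality then gives, for every $t>0$,
\[\mathbb{P}\left\{\,||\Omega|-m|\geq t\,\right\}\leq 2\exp\left(-\frac{t^2/2}{m+t/3}\right).\]
Choosing $t=m/4$, the exponent becomes $\dfrac{(m/4)^2/2}{m+m/12}=\dfrac{m^2/32}{13m/12}=\dfrac{3m}{104}$, and passing to the complementary event produces the advertised bound $1-2\exp(-3m/104)$.

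Since the summands are bounded indicators and the variances are controlled termwise by $\mathcal{W}_{i_1\cdots i_n}$, no step presents a genuine obstacle; the only points meriting care are the variance bound $\mathbb{E}X^2\leq\mathcal{W}$ (which is exactly what forces the variance proxy to be $m$ and thus makes the final constant $3/104$ come out right) and the monotonicity of the Bernstein bound in the variance proxy, which legitimizes replacing the exact variance by its upper bound $m$.
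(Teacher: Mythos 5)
Your proof is correct and follows essentially the same route as the paper's: center the indicators $1_{(i_1,\cdots,i_n)\in\Omega}-\mathcal{W}_{i_1\cdots i_n}$, bound them by $1$, bound the total variance by $m$, and apply Bernstein's inequality with $t=m/4$ to get the exponent $3m/104$. Your explicit remarks on $\mathbb{E}|\Omega|=m$ and the monotonicity of the Bernstein bound in the variance proxy are points the paper leaves implicit, but the argument is the same.
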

\begin{proof}
Please note that
\[||\Omega|-m|=\left|\sum_{i_1,\cdots,i_n}(1_{(i_1,\cdots,i_n)\in\Omega}-\mathcal W_{i_1\cdots i_n})\right|=\left|\sum_{i_1,\cdots,i_n}(1_{(i_1,\cdots,i_n)\in\Omega}-\mathbb{E}(1_{(i_1,\cdots,i_n)\in\Omega})\right|,
\]
which is the sum of zero-mean independent random variables. Observe that $|1_{(i_1,\cdots,i_n)\in\Omega}-\mathbb{E}(1_{(i_1,\cdots,i_n)\in\Omega})|=|1_{(i_1,\cdots,i_n)\in\Omega}-\mathcal W_{i_1\cdots i_n}|\leq 1$ and \[\sum_{i_1,\cdots,i_n}\mathbb{E}(1_{(i_1,\cdots,i_n)\in\Omega}-\mathcal{W}_{i_1\cdots i_n})^2=\sum_{i_1,\cdots,i_n}(\mathcal{W}_{i_1\cdots i_n}-\mathcal{W}_{i_1\cdots i_n}^2)\leq m.
\]

By Bernstein's inequality,
\begin{eqnarray*}
\mathbb{P}\left(||\Omega|-m|\geq t \right)&\leq& 2\exp\left(-\frac{t^2/2}{m+t/3} \right).
\end{eqnarray*}

Set $t=m/4$, then we have
\begin{eqnarray*}
\mathbb{P}\left(||\Omega|-m|\geq m/4 \right)&\leq& 2\exp\left(-\frac{m^2/32}{m+m/12} \right)=2\exp(-3m/104).
\end{eqnarray*}
\end{proof}

Next let us give the formal statement for the upper bounds in Theorem \ref{thm:bfsomega}.
 \begin{theorem}\label{thm: omgsw}
 
Let $\mathcal W=\boldsymbol{w_1}\out \cdots \out \boldsymbol{w_n}\in\mathbb{R}^{d_1\times \cdots\times d_n}$ be a CP rank-1 tensor so that for all $(i_1,\cdots,i_n)\in[d_1]\times\cdots\times[d_n]$ we have $\mathcal W_{i_1\cdots i_n}\in\left[\frac{1}{\sqrt{d_1\cdots d_n}},1\right]$. Suppose that we choose each $(i_1,\cdots,i_n)\in[d_1]\times\cdots\times[d_n]$ independently with probability $\mathcal{W}_{i_1\cdots i_n}$ to form a set $\Omega\subseteq[d_1]\times\cdots\times[d_n]$.  Then with probability at least
$$1-2\exp\left(-\frac{3}{104}\sqrt{\prod_{j=1}^nd_j}~\right)-\sum_{k=1}^{n}\frac{2}{d_k+\prod_{j\neq k}d_j} $$
%$$\left(1-2\exp\left(-\frac{3}{104}\sqrt{\prod_{j=1}^nd_j}\right)\right)\prod\limits_{k=1}^n\left(1-\frac{1}{d_k+\prod_{j\neq k}d_j}\right)^2$$ over the choice of $\Omega$, the following holds.

For the weighted HOSVD Algorithm named  $\mathcal{A}$, $\mathcal{A}$ returns $\widehat{\mathcal{T}}=\mathcal{A}(\mathcal{T}_\Omega+\mathcal{Z}_\Omega)$ for any Tucker $\text{rank}$ $\boldsymbol{r}$ tensor $\mathcal{T}$ with $\left\|\mathcal{T}\right\|_\infty\leq\beta$ so that with probability at least $1-\sum_{k=1}^n\frac{1}{d_k+\prod_{j\neq k}d_j}$ over the choice of $\mathcal{Z}$,
\begin{eqnarray*}\label{uppb}
    \frac{\left\|\mathcal{W}^{(1/2)}\hadam(\mathcal{T}-\widehat{\mathcal{T}})\right\|_F}{\left\|\mathcal{W}^{(1/2)}\right\|_F}
   &\leq&\frac{\sqrt{5}\beta}{\sqrt{|\Omega|}}\left( \sum_{k=1}^n 3r_k\sqrt{d_k+\prod\limits_{j\neq k}d_j}\log\left(d_k+\prod\limits_{j\neq k}d_j\right)\right)\nonumber\\
   &&+\frac{\sqrt{5}\sigma}{|\Omega|}\left(\sum_{k=1}^n6\sqrt{r_k(d_k+\prod\limits_{j\neq k}d_j)}\log\left(d_k+\prod\limits_{j\neq k}d_j\right)\right)\nonumber
\end{eqnarray*}
\end{theorem}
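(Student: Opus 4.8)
The plan is to derive this theorem as a direct consequence of the general upper bound in Theorem~\ref{thm:ub low-rank tensor} together with the two concentration lemmas, Lemma~\ref{lmm:ub for lmb_mu} and Lemma~\ref{lmm:est_omega}; essentially all the analytic work has already been done, so the task reduces to substitution and careful probability bookkeeping. First I would invoke Theorem~\ref{thm:ub low-rank tensor} and use the hypothesis $\|\mathcal{T}\|_\infty \le \beta$ to write
\[
\left\|\mathcal{W}^{(1/2)}\hadam(\mathcal{T}-\widehat{\mathcal{T}})\right\|_F \le \left(\sum_{k=1}^n 6\sqrt{r_k\log\Big(d_k+\prod_{j\neq k}d_j\Big)}\,\mu_k\right)\sigma + \left(\sum_{k=1}^n 3r_k\,\lambda_k\right)\beta,
\]
where $\lambda_k = \|(\mathcal{W}^{(1/2)}-\mathcal{W}^{(-1/2)}\hadam\boldsymbol{1}_\Omega)_{(k)}\|_2$, an inequality that holds with probability at least $1-\sum_{k=1}^n \frac{1}{d_k+\prod_{j\neq k}d_j}$ over the choice of $\mathcal{Z}$ for any fixed $\Omega$.

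Next I would control the $\Omega$-dependent quantities. Lemma~\ref{lmm:ub for lmb_mu} gives $\mu_k \le 2\sqrt{(d_k+\prod_{j\neq k}d_j)\log(d_k+\prod_{j\neq k}d_j)}$ and $\lambda_k \le 2\sqrt{d_k+\prod_{j\neq k}d_j}\log(d_k+\prod_{j\neq k}d_j)$; substituting these collapses each summand into the $\sqrt{r_k(d_k+\prod_{j\neq k}d_j)}\log(\cdots)$ and $r_k\sqrt{d_k+\prod_{j\neq k}d_j}\log(\cdots)$ forms appearing in the statement. To pass to the normalized error I would divide by $\|\mathcal{W}^{(1/2)}\|_F = \sqrt{m}$, where $m = \|\mathcal{W}^{(1/2)}\|_F^2$, and use Lemma~\ref{lmm:est_omega}: the event $||\Omega|-m|\le m/4$ forces $m \ge \tfrac{4}{5}|\Omega|$, hence $\tfrac{1}{\sqrt{m}} \le \tfrac{\sqrt{5}}{2\sqrt{|\Omega|}}$, which is exactly what produces the $\sqrt{5}$ factors and the $|\Omega|$-dependence in the claimed bound.

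The step requiring the most care is the probability bookkeeping, since three different high-probability events are in play and they live over two distinct sources of randomness. The bounds on $\mu_k$, $\lambda_k$ (Lemma~\ref{lmm:ub for lmb_mu}) and on $|\Omega|$ (Lemma~\ref{lmm:est_omega}) are statements about $\Omega$, whereas the noise estimate in Theorem~\ref{thm:ub low-rank tensor} is over $\mathcal{Z}$. I would therefore first condition on a ``good'' draw of $\Omega$: by a union bound the events of Lemma~\ref{lmm:ub for lmb_mu} and Lemma~\ref{lmm:est_omega} hold simultaneously with probability at least $1 - 2\exp(-3m/104) - \sum_{k=1}^n \frac{2}{d_k+\prod_{j\neq k}d_j}$ over $\Omega$. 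On such an $\Omega$ the normalizer $\sqrt m$ and all coefficients are controlled, and then, conditionally, the Theorem~\ref{thm:ub low-rank tensor} estimate holds with probability at least $1-\sum_{k=1}^n\frac{1}{d_k+\prod_{j\neq k}d_j}$ over $\mathcal{Z}$, matching the two-stage probability in the statement. The final cosmetic step is to note that the lower bound $\mathcal{W}_{i_1\cdots i_n}\ge (\prod_j d_j)^{-1/2}$ yields $m = \sum_{i_1\cdots i_n}\mathcal{W}_{i_1\cdots i_n} \ge \sqrt{\prod_{j=1}^n d_j}$, so that $2\exp(-3m/104) \le 2\exp(-\tfrac{3}{104}\sqrt{\prod_{j=1}^n d_j})$, giving the $\Omega$-probability in the precise form stated.
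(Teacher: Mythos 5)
Your proposal is correct and follows exactly the paper's route: the paper's entire proof of Theorem \ref{thm: omgsw} is the one-line citation of Theorem \ref{thm:ub low-rank tensor} together with Lemmas \ref{lmm:ub for lmb_mu} and \ref{lmm:est_omega}, and your write-up supplies precisely the substitutions, the normalization $1/\sqrt{m}\leq \sqrt{5}/(2\sqrt{|\Omega|})$, the lower bound $m\geq\sqrt{\prod_{j=1}^n d_j}$, and the two-stage probability bookkeeping (union bound over the $\Omega$-events, then conditioning for the $\mathcal{Z}$-event) that this citation leaves implicit. One caveat: tracking constants, your substitution yields the noise term $\frac{\sqrt{5}\,\sigma}{\sqrt{|\Omega|}}\sum_{k=1}^n 6\sqrt{r_k(d_k+\prod_{j\neq k}d_j)}\log\left(d_k+\prod_{j\neq k}d_j\right)$, with $\sqrt{|\Omega|}$ in the denominator rather than the $|\Omega|$ printed in the theorem; since $1/|\Omega|\leq 1/\sqrt{|\Omega|}$, the printed statement is strictly stronger than what this argument (or the paper's own citation) establishes, so the printed $|\Omega|$ is evidently a typo --- consistent with the informal Theorem \ref{thm:bfsomega}, where both terms carry $1/\sqrt{|\Omega|}$ --- and not a gap in your reasoning.
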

\begin{proof}
This is directly from Theorem \ref{thm:ub low-rank tensor}, Lemmas \ref{lmm:ub for lmb_mu} and \ref{lmm:est_omega}.
\end{proof}

\subsubsection{Lower Bound} 
To deduce the lower bound, we have to construct a finite  subset $S$ in the cone $K_{\boldsymbol{r}}$ so that we can approximate the minimal distance between two different elements in $S$. Before we prove the lower bound,  we  need the following theorems and lemmas.
\begin{theorem}[Hanson-Wright inequality]\label{thm:HWI} There is some constant $c > 0$ so that the following holds. Let
$\xi \in\{ 0,\pm 1\}^d$
be a vector with mean-zero, independent entries, and let $F$ be any matrix which has zero
diagonal. Then
\[\mathbb{P}\left\{
|\xi^T F \xi| > t\right\}\leq 2 \exp\left(
-c\cdot  \min\left\{\frac{t^2}{\|F\|_F^2}, \frac{t}{\|F\|_2}
\right\}
\right).
\]
\end{theorem}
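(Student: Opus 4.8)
The plan is to control the moment generating function (MGF) of the quadratic chaos $\xi^T F \xi = \sum_{i\neq j}F_{ij}\xi_i\xi_j$ (the diagonal terms vanish by hypothesis) and then run a Chernoff bound, optimizing the exponential parameter to produce the two advertised regimes. First I would record the two elementary facts that drive everything: since the entries are independent and mean-zero, $\mathbb{E}[\xi^T F \xi]=\sum_{i\neq j}F_{ij}\,\mathbb{E}[\xi_i]\mathbb{E}[\xi_j]=0$, so the chaos is already centered; and since each $\xi_i\in\{0,\pm1\}$ is bounded by $1$, it is sub-Gaussian with an absolute-constant parameter (Hoeffding's lemma), which is all that enters the estimates.

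The core is a decoupling argument. Let $\xi'$ be an independent copy of $\xi$. By the standard decoupling inequality for zero-diagonal chaos (de la Pe\~na--Montgomery-Smith), there is a universal constant so that
\[
\mathbb{E}\exp(\lambda\,\xi^T F \xi)\leq \mathbb{E}\exp\big(C\lambda\,\xi^T F \xi'\big).
\]
Conditioning on $\xi'$, the quantity $\xi^T F\xi'=\sum_i \xi_i (F\xi')_i$ is a sum of independent mean-zero bounded (hence sub-Gaussian) terms, so the conditional MGF is bounded by $\exp\big(c_1\lambda^2\|F\xi'\|_2^2\big)$. Taking expectation over $\xi'$ reduces the problem to bounding $\mathbb{E}\exp\big(c_1\lambda^2\|F\xi'\|_2^2\big)$, the MGF of the positive-semidefinite form $\|F\xi'\|_2^2=\xi'^T(F^T F)\xi'$.

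The delicate step is this last MGF. Writing $F^T F=\sum_k s_k^2 u_k u_k^T$ in terms of the singular values $s_k$ of $F$ gives $\|F\xi'\|_2^2=\sum_k s_k^2\langle u_k,\xi'\rangle^2$, but the variables $\langle u_k,\xi'\rangle$ are not independent, so the MGF does not factor directly. I would resolve this by a Gaussian comparison: the MGF of a sub-Gaussian quadratic form is dominated, up to absolute constants, by that of the Gaussian form $\|Fg\|_2^2$ with $g$ standard Gaussian, and in the Gaussian case the eigenbasis decouples into independent chi-squared variables, yielding $\mathbb{E}\exp(\theta\|Fg\|_2^2)=\prod_k(1-2\theta s_k^2)^{-1/2}$ for $\theta<1/(2\|F\|_2^2)$. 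Using $-\tfrac12\log(1-x)\leq x/(2(1-x))$ together with $\sum_k s_k^2=\|F\|_F^2$ and $\max_k s_k^2=\|F\|_2^2$, this gives, for $|\lambda|\leq c_2/\|F\|_2$,
\[
\mathbb{E}\exp(\lambda\,\xi^T F \xi)\leq \exp\!\big(c_3\lambda^2\|F\|_F^2\big).
\]

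Finally I would run the Chernoff bound $\mathbb{P}\{\xi^T F\xi>t\}\leq\exp\big(-\lambda t+c_3\lambda^2\|F\|_F^2\big)$ over $0<\lambda\leq c_2/\|F\|_2$. The unconstrained optimizer is $\lambda=t/(2c_3\|F\|_F^2)$: when it respects the constraint (equivalently $t\lesssim\|F\|_F^2/\|F\|_2$) it produces the sub-Gaussian tail $\exp(-c\,t^2/\|F\|_F^2)$, and otherwise taking $\lambda=c_2/\|F\|_2$ produces the sub-exponential tail $\exp(-c\,t/\|F\|_2)$; together these give $\exp\big(-c\min\{t^2/\|F\|_F^2,\,t/\|F\|_2\}\big)$. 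Applying the identical argument to $-F$ controls the lower tail, and a union bound over the two tails yields the factor $2$ and the absolute value. The main obstacle is the Gaussian-comparison step for the non-factorable quadratic MGF; the remaining Chernoff optimization is routine.
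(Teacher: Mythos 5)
The paper offers no proof of Theorem \ref{thm:HWI} to compare against: it is the classical Hanson--Wright inequality, imported as a known tool and used only to bound the chaos term $(I)$ in the lower-bound construction of Appendix \ref{section:proofFHOSVD}. Your proposal is therefore necessarily a different route --- an actual proof --- and it is the standard modern argument of Rudelson--Vershynin: de la Pe\~na--Montgomery-Smith decoupling (valid here exactly because $F$ has zero diagonal), a conditional Hoeffding bound giving $\mathbb{E}_{\xi}\exp\bigl(\lambda\,\xi^TF\xi'\bigr)\le\exp\bigl(c_1\lambda^2\|F\xi'\|_2^2\bigr)$, a Gaussian comparison for the resulting positive-semidefinite form, and a constrained Chernoff optimization producing the sub-Gaussian and sub-exponential regimes; all of this applies to the paper's setting since entries in $\{0,\pm1\}$ are bounded and hence sub-Gaussian with an absolute constant. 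The one place where you assert rather than argue is the ``delicate step,'' the comparison $\mathbb{E}\exp\bigl(\theta\|F\xi'\|_2^2\bigr)\lesssim\mathbb{E}\exp\bigl(C\theta\|Fg\|_2^2\bigr)$; this is true, and the clean justification is the linearization trick: write $\exp\bigl(\theta\|F\xi'\|_2^2\bigr)=\mathbb{E}_g\exp\bigl(\sqrt{2\theta}\,\langle F^Tg,\xi'\rangle\bigr)$ for an auxiliary standard Gaussian $g$, swap the expectations (everything is nonnegative, so Tonelli applies), and use coordinate-wise sub-Gaussianity of $\xi'$ to obtain $\mathbb{E}_g\exp\bigl(C\theta\|F^Tg\|_2^2\bigr)$, whose singular values agree with those of $F$, so your explicit computation $\prod_k(1-2C\theta s_k^2)^{-1/2}$ and the elementary bound $-\tfrac12\log(1-x)\le x$ for $x\le\tfrac12$ go through verbatim and yield $\mathbb{E}\exp(\lambda\,\xi^TF\xi)\le\exp\bigl(c_3\lambda^2\|F\|_F^2\bigr)$ for $|\lambda|\le c_2/\|F\|_2$. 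With that step filled in, your constrained Chernoff optimization, the application to $-F$ for the lower tail, and the union bound giving the factor $2$ are routine and correct, so the proposal is a sound proof sketch of the stated inequality.
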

\begin{theorem}[Fano's Inequality] Let $\mathcal{F}=\{f_0,\cdots,f_n\}$ be a collection of densities on $\mathcal{K}$, and suppose that $\mathcal{A}:\mathcal{K}\rightarrow\{0,\cdots,n\}$. Suppose there is some $\beta>0$ such that for any $i\neq j$, $D_{KL}(f_i\|f_j)\leq\beta$. Then
\[\max_{i}\mathbb{P}_{K\sim f_i}\left\{\mathcal{A}(K)\neq i \right\}\geq 1-\frac{\beta+\log(2)}{\log(n)}.
\]
\end{theorem}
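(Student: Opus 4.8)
The plan is to prove this minimax form of Fano's inequality by the standard information-theoretic reduction from estimation to hypothesis testing. I would place a uniform prior on the index set, relate the \emph{average} error probability of $\mathcal{A}$ to a conditional entropy, and control that entropy using only the pairwise Kullback--Leibler bound $D_{KL}(f_i\|f_j)\le\beta$.

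First I would introduce an auxiliary random index $V$ drawn uniformly from $\{0,1,\dots,n\}$ and, conditionally on $V=i$, a sample $K\sim f_i$; set $\hat V=\mathcal{A}(K)$ and let $E$ denote the error event $\{\hat V\neq V\}$. The core chain of inequalities I would establish is
\[
H(V\mid K)\;\le\;H(V\mid \hat V)\;\le\;H(E)+\mathbb{P}(E)\log n\;\le\;\log 2+\mathbb{P}(E)\log n .
\]
The first inequality is the data-processing inequality for the Markov chain $V\to K\to\hat V$ (so $I(V;\hat V)\le I(V;K)$); the second is the combinatorial Fano step, obtained from the entropy chain rule by noting that $E$ is a deterministic function of $(V,\hat V)$ and that, conditioned on an error, $V$ ranges over at most $n$ values.

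Next I would lower-bound the left-hand side and upper-bound the mutual information. Since $V$ is uniform, $H(V\mid K)=\log(n+1)-I(V;K)$. Writing $\bar f=\frac{1}{n+1}\sum_{j} f_j$ for the mixture marginal of $K$, one has $I(V;K)=\frac{1}{n+1}\sum_i D_{KL}(f_i\|\bar f)$. The key step is that convexity of $D_{KL}(p\|\cdot)$ in its second argument (Jensen's inequality) gives $D_{KL}(f_i\|\bar f)\le\frac{1}{n+1}\sum_j D_{KL}(f_i\|f_j)$, so that
\[
I(V;K)\le\frac{1}{(n+1)^2}\sum_{i\neq j}D_{KL}(f_i\|f_j)\le\frac{n}{n+1}\,\beta\le\beta .
\]
Combining with the chain above yields $\log(n+1)-\beta\le\log 2+\mathbb{P}(E)\log n$, and using $\log(n+1)\ge\log n$ gives $\mathbb{P}(E)\ge 1-\frac{\beta+\log 2}{\log n}$.

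Finally I would pass from the average to the worst case: because $\mathbb{P}(E)=\frac{1}{n+1}\sum_i\mathbb{P}_{K\sim f_i}\{\mathcal{A}(K)\neq i\}$ is the uniform average of the per-hypothesis error probabilities, the maximum over $i$ dominates it, which is exactly the claimed bound. The only genuinely non-mechanical point is the mutual-information estimate: the hypothesis supplies only \emph{pairwise} divergences, not divergences against the mixture $\bar f$, and the convexity inequality is precisely what bridges that gap; beyond that, the main care needed is the harmless off-by-one between $\log(n+1)$ and $\log n$.
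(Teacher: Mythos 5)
Your proof is correct: the reduction to average error via a uniform prior on $\{0,\dots,n\}$, the data-processing step $H(V\mid K)\le H(V\mid \hat V)$, the combinatorial Fano step with the $\log n$ (not $\log(n+1)$) factor, and the use of convexity of $D_{KL}(f_i\|\cdot)$ to convert the \emph{pairwise} divergence hypothesis into the mutual-information bound $I(V;K)\le\frac{n}{n+1}\beta\le\beta$ are all sound, and the final bookkeeping between $\log(n+1)$ and $\log n$ is handled correctly. Note, however, that the paper gives no proof of this theorem at all---it is stated as a classical result and used as a black box in proving the specialization that follows it (Lemma \ref{lmm:FanoE})---so there is no paper argument to compare against; what you have written is the standard information-theoretic proof from the minimax lower-bound literature.
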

The following lemma specializes Fano's Inequality to our setting, which is a generalization of (\cite{foucart2019weighted} Lemma 19).  In the following lemma, we show that for any reconstruction algorithm on a set $K\subseteq \mathbb{R}^{d_1\times\cdots\times d_n}$, with probability no less than $\frac{1}{2}$, there exists some elements in $K$ such that the weighted reconstruction error  is bounded below by some quantity, where the quantity  is independent of the algorithm. 
\begin{Lemma}\label{lmm:FanoE}
Let $K\subseteq \mathbb{R}^{d_1\times \cdots\times d_n}$, and let $S\subseteq K$ be a finite subset of $K$ so that $|S|>16$. Let $\Omega\subseteq [d_1]\times\cdots\times [d_n]$ be a sampling pattern. Let $\sigma>0$ and choose
\[\kappa\leq\frac{\sigma\sqrt{\log|S|}}{4\max_{\mathcal{T}\in S}\left\|\mathcal{T}_\Omega\right\|_F},
\]
and suppose that 
\[\kappa S\subseteq K.
\]

Let $\mathcal{Z}\in\mathbb{R}^{d_1\times \cdots\times d_n}$ be a tensor whose entries $\mathcal{Z}_{i_1\cdots i_n}$ are i.i.d., $\mathcal{Z}_{i_1\cdots i_n}\sim \mathcal{N}(0,\sigma^2)$. Let $\mathcal{H}\subseteq \mathbb{R}^{d_1\times\cdots\times d_n}$ be any weight tensor.

Then for any algorithm $\mathcal{A}:\mathbb{R}^{\Omega}\rightarrow \mathbb{R}^{d_1\times \cdots\times d_n}$ that takes as input $\mathcal{T}_\Omega+\mathcal{Z}_\Omega$ for $\mathcal{T}\in K$ and outputs an estimate $\widehat{\mathcal{T}}$ to $\mathcal{T}$, there is some $\mathcal{X}\in K$ so that
\begin{equation}\label{eqn:Fano}
\left\|\mathcal H\hadam(\mathcal{A}(\mathcal{X}_\Omega+\mathcal{Z}_{\Omega})-\mathcal{X})\right\|_F\geq \frac{\kappa}{2}\min_{\mathcal{T}\neq\mathcal{T}'\in S}\left\|\mathcal H\hadam(\mathcal{T}-\mathcal{T}')\right\|_F
\end{equation}
with probability at least $\frac{1}{2}$.
\end{Lemma}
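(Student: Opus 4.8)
The plan is to reduce the weighted estimation problem to a multiple-hypothesis test and invoke the Fano inequality stated above. Write $N=|S|$ and enumerate $S=\{\mathcal{T}^{(0)},\dots,\mathcal{T}^{(N-1)}\}$; the candidate ground-truth tensors will be the rescaled elements $\kappa\mathcal{T}^{(i)}\in\kappa S\subseteq K$. For each index $i$ let $f_i$ denote the law of the observation $\kappa\mathcal{T}^{(i)}_\Omega+\mathcal{Z}_\Omega$. Since the entries of $\mathcal{Z}$ on $\Omega$ are i.i.d.\ $\mathcal{N}(0,\sigma^2)$ and the entries off $\Omega$ never enter the observation, $f_i$ is a Gaussian supported on the $\Omega$-coordinates, centered at $\kappa\mathcal{T}^{(i)}_\Omega$ with covariance $\sigma^2 I$. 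Given the estimator $\mathcal{A}$, I would build a decoder $\hat\imath$ that returns the index minimizing the weighted distance $\|\mathcal{H}\hadam(\mathcal{A}(\cdot)-\kappa\mathcal{T}^{(i)})\|_F$.

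Next I would run the standard separation argument with $\rho=\frac{\kappa}{2}\min_{\mathcal{T}\neq\mathcal{T}'\in S}\|\mathcal{H}\hadam(\mathcal{T}-\mathcal{T}')\|_F$, which is exactly the target lower bound. If the true index is $i$ and the weighted estimation error satisfies $\|\mathcal{H}\hadam(\widehat{\mathcal{T}}-\kappa\mathcal{T}^{(i)})\|_F<\rho$, then for every $j\neq i$ the triangle inequality gives $\|\mathcal{H}\hadam(\widehat{\mathcal{T}}-\kappa\mathcal{T}^{(j)})\|_F\geq \kappa\|\mathcal{H}\hadam(\mathcal{T}^{(i)}-\mathcal{T}^{(j)})\|_F-\rho>2\rho-\rho=\rho$, so the decoder must output $\hat\imath=i$. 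Contrapositively, a decoding error forces the weighted error to be at least $\rho$, hence $\mathbb{P}\{\|\mathcal{H}\hadam(\mathcal{A}(\cdot)-\kappa\mathcal{T}^{(i)})\|_F\geq\rho\}\geq\mathbb{P}\{\hat\imath\neq i\}$ for every $i$.

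It then remains to control the pairwise divergences. For two Gaussians with common covariance one has $D_{KL}(f_i\|f_j)=\frac{\kappa^2}{2\sigma^2}\|(\mathcal{T}^{(i)}-\mathcal{T}^{(j)})_\Omega\|_F^2$, and since $\|(\mathcal{T}^{(i)}-\mathcal{T}^{(j)})_\Omega\|_F\leq 2\max_{\mathcal{T}\in S}\|\mathcal{T}_\Omega\|_F$, the hypothesis $\kappa\leq\frac{\sigma\sqrt{\log N}}{4\max_{\mathcal{T}}\|\mathcal{T}_\Omega\|_F}$ yields $D_{KL}(f_i\|f_j)\leq\frac{\log N}{8}=:\beta$. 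Applying Fano's inequality to the $N$ densities $f_0,\dots,f_{N-1}$ (so that $n=N-1$ there) gives $\max_i\mathbb{P}\{\hat\imath\neq i\}\geq 1-\frac{\beta+\log 2}{\log(N-1)}$. A short numerical check shows that the assumption $N=|S|>16$ makes the right-hand side at least $\frac12$ (e.g.\ already at $N=17$ one gets $1-\frac{\log 17/8+\log 2}{\log 16}>0.62$, and the bound only improves as $N$ grows). Thus there is some index $i$, equivalently some $\mathcal{X}=\kappa\mathcal{T}^{(i)}\in K$, for which the decoder errs with probability at least $\frac12$; combined with the previous paragraph this yields the advertised bound \eqref{eqn:Fano}.

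The main obstacle I anticipate is constant-bookkeeping rather than any conceptual difficulty: the factor $4$ in the definition of $\kappa$, the factor $2$ lost in the separation step, and the $\tfrac18$ in the KL estimate must all align so that the threshold $|S|>16$ is genuinely sufficient, and one must be careful that Fano's denominator is $\log(N-1)$ rather than $\log N$ without breaking the $\tfrac12$ guarantee. A secondary point deserving care is that the likelihoods live only on the coordinates indexed by $\Omega$, so the KL computation must be carried out on $\Omega$ alone, with the off-$\Omega$ noise contributing nothing.
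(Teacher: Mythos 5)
Your proposal is correct and follows essentially the same route as the paper's own proof: rescale $S$ by $\kappa$ so the candidates lie in $K$, compute the Gaussian KL divergences restricted to the $\Omega$-coordinates (bounded by $\tfrac{1}{8}\log|S|$ under the hypothesis on $\kappa$), convert $\mathcal{A}$ into a hypothesis-testing decoder, apply Fano's inequality with the $|S|>16$ condition guaranteeing error probability at least $\tfrac12$, and transfer back via the triangle-inequality separation argument. The only cosmetic difference is that you use a nearest-neighbor decoder while the paper's decoder snaps to the unique element of $\kappa S$ within half the minimum separation (defaulting to $\mathcal{A}(\mathcal{Y})$ otherwise); the two are interchangeable here.
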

\begin{proof}
Consider the set
\[S'=\kappa S=\{\kappa \mathcal{T}:\mathcal{T}\in S\}
\]
which is a scaled version of $S$. By our assumption, $S'\subseteq K$.

Recall that the Kullback--Leibler (KL) divergence between two multivariate Gaussians is given by
\begin{eqnarray*}
&&D_{KL}(\mathcal{N}(\boldsymbol{\mu}_1,\Sigma_1)\|\mathcal{N}(\boldsymbol{\mu}_2,\Sigma_2))\\
&=&\frac{1}{2}\left( \log\left(\frac{\det(\Sigma_2)}{\det(\Sigma_1)}\right)-n+tr(\Sigma_2^{-1}\Sigma_1)+\langle \Sigma_2^{-1}(\boldsymbol{\mu}_2-\boldsymbol{\mu}_1),\boldsymbol{\mu}_2-\boldsymbol{\mu}_1\rangle \right),
\end{eqnarray*}
where $\boldsymbol{\mu}$$_1$, $\boldsymbol{\mu}$$_2\in\mathbb{R}^n$. 

Specializing to $\mathcal{U},\mathcal{V}\in S'$, with $I=I_{\Omega\times\Omega}$
\begin{eqnarray*}
D_{KL}(\mathcal{U}_\Omega+\mathcal{Z}_\Omega\| \mathcal{V}_\Omega+\mathcal{Z}_\Omega)&=&D_{KL}(\mathcal{N}(\mathcal{U}_\Omega,\sigma^2 I)\| \mathcal{N}(\mathcal{V}_\Omega,\sigma^2 I))\\
&=&\frac{\left\|\mathcal{U}_\Omega-\mathcal{V}_\Omega\right\|_F^2}{2\sigma^2}\\
&\leq&\max_{\mathcal{T}\in S'}\frac{2\left\|\mathcal{T}_\Omega \right\|_F^2}{\sigma^2}
=\frac{2\kappa^2}{\sigma^2}\max_{\mathcal{T}\in S}\left\|\mathcal{T}_\Omega\right\|_F^2.
\end{eqnarray*}

Suppose that $\mathcal{A}$ is as in the statement of the lemma. Define an algorithm $\overline{\mathcal{A}}:\mathbb{R}^{\Omega}\rightarrow\mathbb{R}^{d_1\times \cdots\times d_n}$ so that for any $\mathcal{Y}\in\mathbb{R}^{\Omega}$ if there exists $\mathcal{T}\in S'$ such that  
\[ \left\|\mathcal{H}\hadam(\mathcal{T}-\mathcal{A}(\mathcal{Y}))\right\|_F<\frac{1}{2}\min_{\mathcal{T}\neq\mathcal{T}'\in S'}\|\mathcal{H}\hadam(\mathcal{T}-\mathcal{T}')\|_F:=\frac{\rho}{2},
\] then set $\overline{\mathcal{A}}(\mathcal{Y})=\mathcal{T}$ (notice that if such $\mathcal{T}$ exists, then it is  unique),  otherwise, set\linebreak $\overline{\mathcal A}(\mathcal{Y})=\mathcal{A}(\mathcal Y)$.

Then by the Fano's inequality, there is some $\mathcal{T}\in S'$ so that
\begin{eqnarray*}
\mathbb{P}\left\{\overline{\mathcal{A}}(\mathcal{T}_\Omega+\mathcal{Z}_\Omega)\neq \mathcal{T}
\right\}&\geq & 1-\frac{2\max_{\mathcal{T}\in S'}\|\mathcal{T}_\Omega\|_F^2}{\sigma^2\log(|S|-1)}-\frac{\log(2)}{\log(|S|-1)}\\
&=&1-\frac{2\kappa^2\max_{\mathcal{T}\in S}\|\mathcal{T}_\Omega\|_F^2}{\sigma^2\log(|S|-1)}-\frac{\log(2)}{\log(|S|-1)}\\
&\geq&1-\frac{1}{4}-\frac{1}{4}=\frac{1}{2}.
\end{eqnarray*}

If $\overline{\mathcal{A}}(\mathcal{T}_\Omega+\mathcal{Z}_\Omega)\neq \mathcal{T}$, then $\|\mathcal{H}\hadam(\mathcal{A}(\mathcal{T}_\Omega+\mathcal{Z}_\Omega)-\mathcal{T})\|_F>\rho/2$, and so
\[\mathbb{P}\left\{\|\mathcal{H}\hadam(\mathcal{A}(\mathcal{T}_\Omega+\mathcal{Z}_\Omega)-\mathcal{T})\|_F\geq\rho/2
\right\}\geq\mathbb{P}\left\{\overline{\mathcal{A}}(\mathcal{T}_\Omega+\mathcal{Z}_\Omega)\neq \mathcal{T}
\right\}\geq 1/2.
\]

Finally, we observe that
\[\frac{\rho}{2}=\frac{1}{2}\min_{\mathcal{T}\neq\mathcal{T}'\in S'}\|\mathcal{H}\hadam(\mathcal{T}-\mathcal{T}')\|_F=\frac{\kappa}{2}\min_{\mathcal{T}\neq\mathcal{T}'\in S}\|\mathcal{H}\hadam(\mathcal{T}-\mathcal{T}')\|_F,
\]
which completes the proof.
\end{proof}
To understand the lower bound $\frac{\kappa}{2}\min_{\mathcal{T}\neq\mathcal{T}\in S}\|\mathcal{H}\hadam(\mathcal{T}-\mathcal{T}')\|_F$ in  \eqref{eqn:Fano}, we construct a specific finite subset $S$ for the cone of Tucker rank $\boldsymbol{r}$ tensors in the following lemma.
\begin{Lemma}\label{lmm:lb}
There is some constant $c$ so that the following holds. Let $d_1,\cdots,d_n>0$ and $r_1,\cdots,r_n>0$ be sufficiently large.  Let $K$ be the cone of Tucker rank $ \boldsymbol{r}$ tensors with $\boldsymbol{r}=[r_1~\cdots~r_n]$, $\mathcal{H}$ be any CP rank-1 weight tensor, and  $\mathcal{B}$ be any CP rank-1 tensor with $\|\mathcal{B}\|_\infty\leq 1$. Write $\mathcal{H}=\boldsymbol{h_1}\out \cdots\out \boldsymbol{h_n}$ and $\mathcal{B}=\boldsymbol{b_1}\out \cdots\out \boldsymbol{b_n}$, and 
\[ \boldsymbol{w_1}=(\boldsymbol{h_1}\hadam \boldsymbol{b_1})^{(2)}, \cdots, \boldsymbol{w_n}=(\boldsymbol{h_n}\hadam \boldsymbol{b_n})^{(2)}.
\]

Let $$\gamma=\sqrt{\frac{1}{2}\left(\prod\limits_{k=1}^nr_k\right)\log\left(8\prod\limits_{k=1}^nd_k\right)}.$$

There is a set $S\subseteq K\cap\gamma \boldsymbol{B}_\infty$ so that
\begin{enumerate}
    \item The set has size $|S|\geq N$, for
    \begin{multline*}
    N=C\exp\left({c\cdot\min\left\{ \frac{\prod\limits_{k=1}^nr_k}{\left(\prod\limits_{k=1}^n(2r_k(\|\boldsymbol{w}_k\|_2/\|\boldsymbol{w}_k\|_1)^2+1)\right)-1},\prod\limits_{k=1}^n r_k,\right.}\right.\\
 \left. \left.\frac{\prod\limits_{k=1}^nr_k}{\left(\prod\limits_{k=1}^n(2\|\boldsymbol{w_k}\|_2/\|\boldsymbol{w}_k\|_1\sqrt{r_k\log(r_k)}+2\|\boldsymbol{w_k}\|_\infty/\|\boldsymbol{w}_k\|_1 r_k\log(r_k)+1)\right)-1}\right\} \right).
    \end{multline*}
    \item $\|\mathcal{T}_{\Omega}\|_F\leq 2\sqrt{\prod\limits_{k=1}^nr_k}\|\mathcal{B}_{\Omega}\|_F$ for all $\mathcal{T}\in S$.
    \item $\left\|\mathcal {H}\hadam(\mathcal{T}-\widetilde{\mathcal{T}})\right\|_F\geq \sqrt{\prod\limits_{k=1}^nr_k}\left\|\mathcal{H}\hadam \mathcal{B}\right\|_F$ for all $\mathcal{T}\neq\widetilde{\mathcal{T}} \in S$.
\end{enumerate}
\end{Lemma}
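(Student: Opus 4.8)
The plan is to run a Gilbert--Varshamov style packing argument on the ``sign core'' of a block-constant Tucker decomposition, using the Hanson--Wright inequality (Theorem \ref{thm:HWI}) to control a weighted quadratic form; this is the natural tensor analogue of the matrix construction in \cite{foucart2019weighted}, Lemma 19. First I would fix, for each mode $k$, a partition of $[d_k]$ into $r_k$ blocks $B_k^1,\dots,B_k^{r_k}$, and to a sign core $\boldsymbol\xi\in\{\pm1\}^{r_1\times\cdots\times r_n}$ associate the block-constant tensor $\mathcal E_{\boldsymbol\xi}$ with $(\mathcal E_{\boldsymbol\xi})_{i_1\cdots i_n}=\xi_{j_1\cdots j_n}$ whenever $i_k\in B_k^{j_k}$. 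Since $\mathcal E_{\boldsymbol\xi}$ is constant on blocks, its mode-$k$ unfolding has at most $r_k$ distinct rows, so $\mathcal E_{\boldsymbol\xi}$ has Tucker rank at most $\boldsymbol r$; as Hadamard product with the rank-one $\mathcal B$ is a mode-wise diagonal scaling, $\mathcal T_{\boldsymbol\xi}:=\sqrt{\prod_k r_k}\,\mathcal B\hadam\mathcal E_{\boldsymbol\xi}$ also has Tucker rank at most $\boldsymbol r$. Membership in $\gamma\boldsymbol{B}_\infty$ is immediate from $\left\|\mathcal T_{\boldsymbol\xi}\right\|_\infty\le\sqrt{\prod_k r_k}\left\|\mathcal B\right\|_\infty\le\gamma$, and property~2 holds since $\mathcal E_{\boldsymbol\xi}$ has $\pm1$ entries, giving $\left\|(\mathcal T_{\boldsymbol\xi})_\Omega\right\|_F=\sqrt{\prod_k r_k}\left\|\mathcal B_\Omega\right\|_F$.

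The heart of the matter is property~3. Writing $\boldsymbol a_k=\boldsymbol h_k\hadam\boldsymbol b_k$ so that $\boldsymbol w_k=\boldsymbol a_k^{(2)}$, a direct computation for two cores $\boldsymbol\xi,\boldsymbol\xi'$ gives
\[
\left\|\mathcal H\hadam(\mathcal T_{\boldsymbol\xi}-\mathcal T_{\boldsymbol\xi'})\right\|_F^2=\Big(\prod_k r_k\Big)\sum_{\boldsymbol j}\omega_{\boldsymbol j}\,(\xi_{\boldsymbol j}-\xi'_{\boldsymbol j})^2,\qquad \omega_{\boldsymbol j}=\prod_{k}\omega_{k,j_k},\quad \omega_{k,j}=\sum_{i\in B_k^{j}}\boldsymbol w_k(i),
\]
where $\sum_{\boldsymbol j}\omega_{\boldsymbol j}=\prod_k\left\|\boldsymbol w_k\right\|_1=\left\|\mathcal H\hadam\mathcal B\right\|_F^2$. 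Using $(\xi_{\boldsymbol j}-\xi'_{\boldsymbol j})^2=2-2\xi_{\boldsymbol j}\xi'_{\boldsymbol j}$, the separation equals $2\big(\prod_k r_k\big)\big(\sum_{\boldsymbol j}\omega_{\boldsymbol j}-\zeta^\top F\zeta\big)$, where $\zeta=(\boldsymbol\xi,\boldsymbol\xi')$ is a single sign vector and $F$ is the zero-diagonal matrix representing $\sum_{\boldsymbol j}\omega_{\boldsymbol j}\xi_{\boldsymbol j}\xi'_{\boldsymbol j}$, with $\left\|F\right\|_F^2=\tfrac12\sum_{\boldsymbol j}\omega_{\boldsymbol j}^2$ and $\left\|F\right\|_2=\tfrac12\max_{\boldsymbol j}\omega_{\boldsymbol j}$. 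Since $\mathbb E[\zeta^\top F\zeta]=0$, property~3 is equivalent to the event $\zeta^\top F\zeta\le\tfrac12\sum_{\boldsymbol j}\omega_{\boldsymbol j}$, which by Hanson--Wright fails with probability at most $2\exp\!\big(-c\min\{t^2/\|F\|_F^2,\,t/\|F\|_2\}\big)$ for $t=\tfrac12\sum_{\boldsymbol j}\omega_{\boldsymbol j}$.

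Finally I would draw $N$ i.i.d.\ uniform sign cores and union bound over the $\binom N2$ pairs: all pairs satisfy the Hanson--Wright event simultaneously with positive probability as soon as $\log N\lesssim\min\{t^2/\|F\|_F^2,\,t/\|F\|_2\}$, while trivially $N\le 2^{\prod_k r_k}$. Rewriting $t^2/\|F\|_F^2=\tfrac12\prod_k\big(\|\boldsymbol w_k\|_1^2/\sum_{j}\omega_{k,j}^2\big)$ and $t/\|F\|_2=\prod_k\big(\|\boldsymbol w_k\|_1/\max_j\omega_{k,j}\big)$ produces exactly the three terms of the stated $N$: the Frobenius term yields the per-mode factor $2r_k(\|\boldsymbol w_k\|_2/\|\boldsymbol w_k\|_1)^2+1$ (giving term (a)), the operator-norm term yields $2\tfrac{\|\boldsymbol w_k\|_2}{\|\boldsymbol w_k\|_1}\sqrt{r_k\log r_k}+2\tfrac{\|\boldsymbol w_k\|_\infty}{\|\boldsymbol w_k\|_1}r_k\log r_k+1$ (giving term (c)), and the cardinality constraint $N\le2^{\prod_k r_k}$ gives the middle term $\prod_k r_k$. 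The main obstacle is making this translation sharp rather than the Hanson--Wright step itself: I must select the partitions so the block masses $\omega_{k,j}$ are as balanced as possible and bound $\sum_j\omega_{k,j}^2$ and $\max_j\omega_{k,j}$ in terms of $\|\boldsymbol w_k\|_1,\|\boldsymbol w_k\|_2,\|\boldsymbol w_k\|_\infty$ via Bernstein's inequality, with the $r_k\log r_k$ factors arising from a union bound over the $r_k$ blocks. Controlling this weight imbalance for non-flat $\boldsymbol w_k$ is where the real work lies; for perfectly flat weights only the middle term $\prod_k r_k$ survives, recovering the clean packing count.
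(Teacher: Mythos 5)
Your proposal is correct in substance, but it takes a genuinely different route from the paper's proof, so it is worth comparing the two. The paper builds $S$ as $\{\mathcal{B}\hadam(\mathcal{C}\times_1{}^1U\times_2\cdots\times_n{}^nU):\mathcal{C}\in\Psi\}$, where only the cores $\mathcal{C}\in\{\pm1\}^{r_1\times\cdots\times r_n}$ are random: the matrices ${}^kU\in\{\pm1\}^{d_k\times r_k}$ are deterministic ``spreading'' matrices imported from the matrix case (\cite{foucart2019weighted} Claim 22), chosen so that ${}^kU^TD_k{}^kU$ is close to $\|\boldsymbol{w_k}\|_1 I$ in Frobenius and spectral norm; Hanson--Wright is then applied to the vectorized core difference $\boldsymbol{\xi}$ (entries in $\{0,\pm1\}$) against the off-diagonal part of $\kron_{k=1}^n({}^kU^TD_k{}^kU)$, with a separate subgaussian bound for the diagonal term $\|\boldsymbol{\xi}\|_2^2$, and item 2 together with the $\gamma\boldsymbol{B}_\infty$ membership is only shown to hold for a constant fraction of the random set (Markov, Hoeffding, Chernoff). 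You instead lump each mode into $r_k$ blocks and take block-constant tensors scaled by $\sqrt{\prod_k r_k}$, which makes membership in $K\cap\gamma\boldsymbol{B}_\infty$ and item 2 deterministic (item 2 even with constant $1$), and you apply Hanson--Wright to the concatenated pair $(\boldsymbol{\xi},\boldsymbol{\xi}')$ with the block anti-diagonal matrix $F$; your identities for $\|F\|_F$, $\|F\|_2$ and the product formulas for $t^2/\|F\|_F^2$ and $t/\|F\|_2$ all check out, and the Hanson--Wright event automatically excludes collisions between cores. What your route buys is independence from the spreading-matrix claim and cleaner deterministic bounds; what it defers is the step you correctly flag as the real work, namely converting block masses into the stated norms. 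That step does close: a uniformly random partition plus Bernstein and a union bound over the $r_k$ blocks gives $\max_j\omega_{k,j}\lesssim \|\boldsymbol{w_k}\|_1/r_k+\|\boldsymbol{w_k}\|_2\sqrt{\log(r_k)/r_k}+\|\boldsymbol{w_k}\|_\infty\log(r_k)$ and $\sum_j\omega_{k,j}^2\lesssim\|\boldsymbol{w_k}\|_1^2/r_k+\|\boldsymbol{w_k}\|_2^2$, exactly reproducing the paper's per-mode factors, and since the partitions are independent across modes you need only positive probability per mode, so no $n$-dependent loss. In fact even a greedy deterministic balancing, giving $\max_j\omega_{k,j}\le\|\boldsymbol{w_k}\|_1/r_k+\|\boldsymbol{w_k}\|_\infty$, suffices once you observe that you need only dominate the \emph{minimum} of the three stated terms: whenever the relevant product of per-mode factors exceeds $2$, subtracting $1$ in the paper's denominators costs at most a factor of $2$, and otherwise your exponent is already at least $\tfrac12\prod_k r_k$, the middle term. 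Either way your packing count matches the stated $N$ up to the constants $c$, $C$, which the lemma permits.
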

\begin{proof}
Let $\Psi\subseteq\{\pm1\}^{r_1\times\cdots\times  r_n}$ be a set of random $\pm1$-valued tensors chosen uniformly at random with replacement, of size $4N$. Choose
${}^i{U}\in\{\pm1\}^{d_i\times r_i}$ to be determined below for all $i=1,\cdots,n$ .

Let \[
S=\left\{\mathcal{B}\hadam(\mathcal{C}\times_1 {}^1U\times_2\cdots\times_n{}^nU):\mathcal{C}\in \Psi \right\}.
\]

First of  all,  we would estimate $\left\|\mathcal{T}_{\Omega}\right\|_F$ and $\left\|\mathcal{T}\right\|_{\infty}$. Please note that
\begin{eqnarray*}
\mathbb{E}\left\|\mathcal{T}_{\Omega}\right\|_F^2&=&\mathbb{E}\sum_{(i_1,\cdots,i_n)\in\Omega}\mathcal{B}_{i_1\cdots i_n}^2\left(\sum_{j_1,\cdots,j_n}\mathcal{C}_{j_1\cdots j_n}{}^1U(i_1,j_1)\cdots {}^nU(i_n,j_n)\right)^2=\left(\prod\limits_{i=1}^n r_i\right)\|\mathcal{B}_{\Omega}\|_F^2,
\end{eqnarray*}
where the expectation is over the random choice of $\mathcal{C}$. Then by Markov's inequality,
\[\mathbb{P}\left\{\|\mathcal{T}_{\Omega}\|_F^2\geq \left(4\prod\limits_{i=1}^n r_i\right)\|\mathcal{B}_{\Omega}\|_F^2\right\}\leq \frac{1}{4}.
\]

We also have
\[\|\mathcal{T}\|_{\infty}=\max_{i_1,\cdots,i_n}|\mathcal{B}_{i_1\cdots i_n}|\left|\sum_{j_1,\cdots,j_n}\mathcal{C}_{j_1\cdots j_n}{}^1U(i_1,j_1)\cdots {}^nU(i_n,j_n)\right|.
\]

By Hoeffding's inequality, we have
\begin{eqnarray*}
\mathbb{P}\left\{\left|\sum_{j_1,\cdots,j_n}\mathcal{C}_{j_1\cdots j_n}{}^1U(i_1,j_1)\cdots {}^nU(i_n,j_n)\right|\geq t\right\}&\leq&2\exp\left( -\frac{2t^2}{\prod_{k=1}^n r_k}\right).
\end{eqnarray*}

Using the fact that $|\mathcal{B}_{i_1\cdots i_n}|\leq 1$ and a union bound over all $\prod\limits_{k=1}^nd_k$ values of $i_1,\cdots,i_n$, we conclude that
\begin{eqnarray*}
&&\mathbb{P}\left\{\|\mathcal{T}\|_{\infty}\geq\sqrt{\frac{1}{2}\left(\prod\limits_{k=1}^n r_k\right)\log\left(8\prod\limits_{k=1}^nd_k\right)}\right\}\\
&\leq& 
\left(\prod\limits_{k=1}^nd_k\right)\mathbb{P}\left\{ \left|\sum_{j_1,\cdots,j_n}\mathcal{C}_{j_1\cdots j_n}{}^1U(i_1,j_1)\cdots {}^nU(i_n,j_n)\right|\geq  \sqrt{\frac{1}{2}\left(\prod\limits_{k=1}^nr_k\right)\log\left(8\prod\limits_{k=1}^nd_k\right)}\right\}\\
&\leq& \frac{1}{4}.
\end{eqnarray*}

Thus, for a tensor $\mathcal{T}\in S$, the probability that both of $\|\mathcal{T}\|_{\infty}\leq\sqrt{\frac{1}{2}\left(\prod\limits_{k=1}^nr_k\right)\log\left(8\prod\limits_{k=1}^nd_k\right)}$ and $\|\mathcal{T}_{\Omega}\|_F\leq 2\sqrt{\prod\limits_{k=1}^nr_k}\|\mathcal{B}_{\Omega}\|_F$ hold is at least $\frac{1}{2}$. {Thus, by a Chernoff bound it follows that with  probability at least $1-\exp(-CN)$ for some constant $C$, there are at least $\frac{|S|}{4}$ tensors $\mathcal{T}\in S$ such that all of these hold. Let $\widetilde{S}\subseteq S$ be the set of such $\mathcal{T}$'s. The set guaranteed in the statement of the lemma will be $\widetilde{S}$, which satisfies both item 1 and 2 in the lemma and is also contained in $K\cap\gamma\boldsymbol{B}_{\infty}$.}

Thus, we consider item 3: we are going to show that this holds for $S$ with high probability, thus in particularly it will hold for $\widetilde{S}$, and this will complete the proof of \mbox{the lemma.}

Fix $\mathcal{T}\neq\widetilde{\mathcal{T}}\in S$, and write
\begin{eqnarray*}
&&\left\|\mathcal H\hadam(\mathcal{T}-\widetilde{\mathcal{T}})\right\|_F^2\\
&=&\left\|\mathcal H\hadam\mathcal{B}\hadam((\mathcal{C}-\widetilde{\mathcal{C}})\times_1 {}^1 U\times_2\cdots\times_n{}^nU)\right\|_F^2\\
&=&\sum_{i_1,\cdots,i_n}\mathcal{H}_{i_1\cdots i_n}^2\mathcal{B}_{i_1\cdots i_n}^2\left(\sum_{j_1,\cdots,j_n}(\mathcal{C}_{j_1\cdots j_n}-\widetilde{\mathcal{C}}_{j_1\cdots j_n}){}^1U(i_1,j_1)\cdots {}^nU(i_n,j_n)\right)^2\\
&=&4\sum_{i_1,\cdots,i_n}\mathcal{H}_{i_1\cdots i_n}^2 \mathcal{B}_{i_1\cdots i_n}^2\left\langle \boldsymbol{\xi},{}^1U(i_1,:)\kron \cdots\kron {}^nU(i_n,:)\right\rangle^2,
\end{eqnarray*}
where $\boldsymbol{\xi}$ is the vectorization of $\frac{1}{2}(\mathcal{C}-\widetilde{\mathcal{C}})$. Thus, each entry of $\boldsymbol{\xi}$ is independently $0$ with probability $\frac{1}{2}$ or $\pm1$ with probability $\frac{1}{4}$ each. Rearranging the terms, we have
\begin{eqnarray}\label{eqn:lb}
\left\|\mathcal H\hadam(\mathcal{T}-\widetilde{\mathcal{T}})\right\|_F^2&=&4\boldsymbol{\xi}^T\left({}^1U\kron\cdots\kron{}^nU\right)^T \left(D_1\kron\cdots\kron D_n\right)\left({}^1U\kron\cdots\kron {}^nU\right)\boldsymbol{\xi}\nonumber\\
&=&4\boldsymbol{\xi}^T\left(\left({}^1U^TD_1{}^1U\right)\kron\cdots\kron\left({}^nU^TD_n{}^nU\right) \right)\boldsymbol{\xi}\nonumber\\
&=&4\boldsymbol{\xi}^T\left(\kron_{k=1}^n\left({}^kU^TD_k{}^kU\right) \right)\boldsymbol{\xi},
\end{eqnarray}
where $D_k$ denotes the $ d_k\times d_k$ diagonal matrix with $\boldsymbol{w_k}$ on the diagonal.

To understand \eqref{eqn:lb}, we need to understand the matrix $\kron_{k=1}^n\left({}^kU^TD_k{}^kU\right) \in\mathbb{R}^{\prod\limits_{k=1}^nr_k \times \prod\limits_{k=1}^nr_k}$. 
The diagonal of this matrix is $\left(\prod\limits_{k=1}^n\left\|\boldsymbol{w_k}\right\|_1\right)I$. We will choose the matrix ${}^kU$ for $k=1,\cdots,n$ so that the off-diagonal terms are small.
\end{proof}
{
\begin{claim}\label{cl: U}
There are matrices ${}^kU\in\{\pm 1\}^{d_k\times r_k}$ for $k=1,\cdots,n$ such that:
\begin{enumerate}
    \item [(a)]
    \begin{eqnarray*}
    \left\|\left(\kron_{k=1}^n\left({}^kU^TD_k{}^kU\right)\right) -\left(\prod\limits_{j=1}^n\left\|\boldsymbol{w_j}\right\|_1\right) I\right\|_F^2\leq\left(\prod\limits_{k=1}^n\left(2r_k^2\|\boldsymbol{w_k}\|_2^2+r_k\|\boldsymbol{w_k}\|_1^2\right)\right)-\prod\limits_{k=1}^n\left(r_k\|\boldsymbol{w_k}\|_1^2\right).
    \end{eqnarray*}
    \item [(b)] 
    \begin{eqnarray*}
    &&\left\|\left(\kron_{k=1}^n({}^kU^TD_k{}^kU)\right)-\left(\prod\limits_{j=1}^n\|\boldsymbol{w_j}\|_1\right) I\right\|_2\nonumber\\
    &\leq& \small{\max\left\{\prod\limits_{k=1}^n(2\left\|\boldsymbol{w_k}\right\|_2\sqrt{r_k\log(r_k)}+2\left\|\boldsymbol{w_k}\right\|_\infty r_k\log(r_k)+\left\|\boldsymbol{w_k}\right\|_1)-\prod\limits_{k=1}^n\left\|\boldsymbol{w_k}\right\|_1, \prod\limits_{k=1}^n\left\|\boldsymbol{w_k}\right\|_1\right\}}.
    \end{eqnarray*}
\end{enumerate}
\end{claim}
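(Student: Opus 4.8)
For each mode I would write $M_k := {}^kU^T D_k\,{}^kU \in \mathbb{R}^{r_k\times r_k}$, so the object of interest is $\kron_{k=1}^n({}^kU^TD_k{}^kU) = \kron_{k=1}^n M_k$. Since every entry of ${}^kU$ is $\pm1$, the diagonal entries of $M_k$ all equal $\sum_i \boldsymbol{w_k}(i) = \|\boldsymbol{w_k}\|_1$, so I would split $M_k = \|\boldsymbol{w_k}\|_1 I + E_k$, where $E_k$ is the hollow off-diagonal part with $(E_k)_{jj'} = \sum_i \boldsymbol{w_k}(i)\,{}^kU(i,j)\,{}^kU(i,j')$. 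Both assertions reduce to controlling $E_k$, and the plan is to draw each ${}^kU$ with independent uniform $\pm1$ entries (independently across modes) and show the two bounds hold together with positive probability, which suffices for the existence statement.

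For part (a), I would record the exact Frobenius identity for the Kronecker product. Using $\|\kron_k M_k\|_F^2 = \prod_k\|M_k\|_F^2$, $\operatorname{tr}(\kron_k M_k) = \prod_k r_k\|\boldsymbol{w_k}\|_1$, and $\|(\prod_j\|\boldsymbol{w_j}\|_1)I\|_F^2 = (\prod_k r_k)(\prod_k\|\boldsymbol{w_k}\|_1^2)$, expanding the square together with $\|M_k\|_F^2 = r_k\|\boldsymbol{w_k}\|_1^2 + \|E_k\|_F^2$ gives
\[
\Big\|\kron_{k=1}^n M_k - \big(\textstyle\prod_j\|\boldsymbol{w_j}\|_1\big)I\Big\|_F^2 = \prod_{k=1}^n\big(r_k\|\boldsymbol{w_k}\|_1^2 + \|E_k\|_F^2\big) - \prod_{k=1}^n r_k\|\boldsymbol{w_k}\|_1^2 .
\]
Thus (a) follows once $\|E_k\|_F^2 \le 2r_k^2\|\boldsymbol{w_k}\|_2^2$ for every $k$. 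A second-moment computation shows $\mathbb{E}(E_k)_{jj'}^2 = \|\boldsymbol{w_k}\|_2^2$ for $j\neq j'$ (the cross terms vanish in expectation), so $\mathbb{E}\|E_k\|_F^2 = (r_k^2-r_k)\|\boldsymbol{w_k}\|_2^2 \le r_k^2\|\boldsymbol{w_k}\|_2^2$, and Markov's inequality yields $\|E_k\|_F^2 \le 2r_k^2\|\boldsymbol{w_k}\|_2^2$ with probability at least $\tfrac12$.

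For part (b), I would use that $D_k\succeq 0$, hence $M_k = {}^kU^TD_k{}^kU\succeq 0$ has nonnegative eigenvalues. Since $(\prod_j\|\boldsymbol{w_j}\|_1)I$ commutes with $\kron_k M_k$, the spectral norm of their difference equals $\max\left|\prod_k\lambda^{(k)} - \prod_k\|\boldsymbol{w_k}\|_1\right|$ over eigenvalues $\lambda^{(k)}$ of $M_k$. By Weyl's inequality each $\lambda^{(k)}\in[\|\boldsymbol{w_k}\|_1 - \|E_k\|_2,\ \|\boldsymbol{w_k}\|_1 + \|E_k\|_2]$ and $\lambda^{(k)}\ge 0$, so the upper deviation is bounded by $\prod_k(\|\boldsymbol{w_k}\|_1 + \|E_k\|_2) - \prod_k\|\boldsymbol{w_k}\|_1$ and the lower deviation by $\prod_k\|\boldsymbol{w_k}\|_1$, producing exactly the two arguments of the $\max$ in (b), provided $\|E_k\|_2 \le 2\|\boldsymbol{w_k}\|_2\sqrt{r_k\log r_k} + 2\|\boldsymbol{w_k}\|_\infty r_k\log r_k$. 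To obtain this I would write $E_k = \sum_i \boldsymbol{w_k}(i)(\boldsymbol{v}_i\boldsymbol{v}_i^T - I)$ with $\boldsymbol{v}_i = {}^kU(i,:)^T\in\{\pm1\}^{r_k}$, a sum of independent mean-zero matrices, and apply the matrix Bernstein inequality (\cite{tropp2012user}) with per-summand bound $\|\boldsymbol{w_k}(i)(\boldsymbol{v}_i\boldsymbol{v}_i^T-I)\|_2 = \boldsymbol{w_k}(i)(r_k-1)\le\|\boldsymbol{w_k}\|_\infty r_k$ and variance proxy $\big\|\sum_i\mathbb{E}\big[(\boldsymbol{w_k}(i)(\boldsymbol{v}_i\boldsymbol{v}_i^T-I))^2\big]\big\|_2 = (r_k-1)\|\boldsymbol{w_k}\|_2^2\le r_k\|\boldsymbol{w_k}\|_2^2$; the chosen threshold makes the tail at most $2r_k^{-c}$ for some $c>0$, which is $<\tfrac12$ when $r_k$ is large.

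Finally, since the events in (a) and (b) for a fixed mode depend only on ${}^kU$ and the modes are independent, I would argue mode by mode: for each $k$ the probability that both bounds hold is at least $1 - \tfrac12 - 2r_k^{-c} > 0$ for $r_k$ sufficiently large, so a valid ${}^kU$ exists, and collecting one per mode gives the claim. The main obstacle is the spectral estimate in (b): recognizing $E_k$ as a sum of independent rank-one-minus-identity matrices and computing the Bernstein parameters so the threshold reproduces precisely the two-term expression $2\|\boldsymbol{w_k}\|_2\sqrt{r_k\log r_k} + 2\|\boldsymbol{w_k}\|_\infty r_k\log r_k$, together with the Kronecker-eigenvalue bookkeeping (leveraging positive semidefiniteness) that converts the per-mode spectral bound into the stated $\max$.
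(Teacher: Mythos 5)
Your proposal is correct, and its second half (the Kronecker bookkeeping) coincides with the paper's argument; the difference lies in how the per-mode bounds are obtained. The paper's proof is essentially a two-line reduction: it invokes Claim 22 of \cite{foucart2019weighted} as a black box to assert the existence of ${}^kU\in\{\pm1\}^{d_k\times r_k}$ with $\|{}^kU^TD_k{}^kU\|_F^2\leq 2r_k^2\|\boldsymbol{w_k}\|_2^2+r_k\|\boldsymbol{w_k}\|_1^2$ and $\|{}^kU^TD_k{}^kU\|_2\leq 2\|\boldsymbol{w_k}\|_2\sqrt{r_k\log r_k}+2\|\boldsymbol{w_k}\|_\infty r_k\log r_k+\|\boldsymbol{w_k}\|_1$, and then combines these exactly as you do, using $\|\kron_{k}M_k\|_F^2=\prod_k\|M_k\|_F^2$ together with the trace cancellation for (a), and $\|\kron_k M_k\|_2=\prod_k\|M_k\|_2$ with the nonnegativity of the spectrum of $\kron_k M_k$ for the two-sided $\max$ in (b). You instead re-derive the per-mode bounds from scratch: writing $M_k=\|\boldsymbol{w_k}\|_1 I+E_k$, you get the Frobenius bound from the exact second-moment computation $\mathbb{E}\|E_k\|_F^2=r_k(r_k-1)\|\boldsymbol{w_k}\|_2^2$ plus Markov, and the spectral bound from matrix Bernstein applied to $E_k=\sum_i \boldsymbol{w_k}(i)(\boldsymbol{v}_i\boldsymbol{v}_i^T-I)$, with a union bound ensuring both hold for a common ${}^kU$ when $r_k$ is large (consistent with the ``sufficiently large'' hypothesis of Lemma \ref{lmm:lb} in which this claim sits). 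Both computations check out (your Bernstein parameters $\sigma^2\le r_k\|\boldsymbol{w_k}\|_2^2$, $R\le \|\boldsymbol{w_k}\|_\infty r_k$ give a tail $\le r_k^{-1/2}$ at the stated threshold, and your Frobenius identity $\|\kron_k M_k-(\prod_j\|\boldsymbol{w_j}\|_1)I\|_F^2=\prod_k(r_k\|\boldsymbol{w_k}\|_1^2+\|E_k\|_F^2)-\prod_k r_k\|\boldsymbol{w_k}\|_1^2$ is exact). What your route buys is self-containedness and transparency — it exposes the probabilistic mechanism behind the cited matrix-case claim, and it also spells out the PSD/eigenvalue argument that the paper leaves implicit after citing \cite{LF1972}; what the paper's route buys is brevity, at the cost of deferring the real content to the prior work.
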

\begin{proof}
By (\cite{foucart2019weighted} Claim 22), there exist matrices ${}^kU\in\{\pm 1\}^{d_k\times r_k}$ such that:
\begin{enumerate}
    \item [(a)]$\left\|{}^kU^TD_k{}^kU\right\|_F^2\leq 2r_k^2\left\|\boldsymbol{w_k}\right\|_2^2+r_k\left\|\boldsymbol{w_k}\right\|_1^2$ and
     \item [(b)] $\left\|{}^kU^TD_k{}^kU\right\|_2\leq2\left\|\boldsymbol{w_k}\right\|_2\sqrt{r_k\log(r_k)}+2\left\|\boldsymbol{w_k}\right\|_\infty r_k\log(r_k)+\|\boldsymbol{w_k}\|_1 $.
\end{enumerate}

Using (a) and the fact that $\left\|\kron_{k=1}^n({}^kU^TD_k{}^kU)\right\|_F^2=\prod\limits_{k=1}^n\left\|{}^kU^TD_k{}^kU\right\|_F^2$,  we have  \begin{eqnarray*}
&&\left\|\left(\kron_{k=1}^n\left({}^kU^TD_k{}^kU\right)\right) -\left(\prod\limits_{k=1}^n\|\boldsymbol{w_j}\|_1\right) I\right\|_F^2\\
&=&\left\|\kron_{k=1}^n\left({}^kU^TD_k{}^kU\right)\right\|_F^2-\left\| \left(\prod\limits_{k=1}^n\|\boldsymbol{w_j}\|_1\right) I\right\|_F^2\\
&\leq& \left(\prod\limits_{k=1}^n\left(2r_k^2\|\boldsymbol{w_k}\|_2^2+r_k\|\boldsymbol{w_k}\|_1^2\right)\right)-\prod\limits_{k=1}^n\left(r_k\|\boldsymbol{w_k}\|_1^2\right).
\end{eqnarray*}

By (b) and the fact that $\left\|\kron_{k=1}^n({}^kU^TD_k{}^kU)\right\|_2=\prod\limits_{k=1}^n\left\|{}^kU^TD_k{}^kU\right\|_2$ (see \cite{LF1972}), we have 
\begin{align*}
&~\left\|\left(\kron_{k=1}^n\left({}^kU^TD_k{}^kU\right)\right)- \left(\prod\limits_{k=1}^n \|\boldsymbol{w_k}\|_1\right)I\right\|_2\\
\leq~& \max\left\{\prod\limits_{k=1}^n\|\boldsymbol{w_k}\|_1,\right.
\left.\left(\prod\limits_{k=1}^n\left(2\|\boldsymbol{w_k}\|_2\sqrt{r_k\log(r_k)}+2\|\boldsymbol{w_k}\|_\infty r_k\log(r_k)+\|\boldsymbol{w_k}\|_1\right)\right)-\prod\limits_{k=1}^n\|\boldsymbol{w_k}\|_1\right\}.
\end{align*}
\end{proof}
}
Having chosen  matrices ${}^kU$ for $k=1,\cdots,n$, we can now analyze the expression~\eqref{eqn:lb}.
\begin{claim}
 There are constants $c, c'$
so that with probability at least

\begin{eqnarray*}
&&1 -2\exp\left(-c''\prod\limits_{k=1}^nr_k\right)-2\exp\left(-c'\cdot\min\left\{ \frac{\prod\limits_{k=1}^n(r_k\|\boldsymbol{w}_k\|_1^2)}{\prod\limits_{k=1}^n(2r_k\|\boldsymbol{w}_k\|_2^2+\|\boldsymbol{w}_k\|_1^2)-\prod\limits_{k=1}^n\|\boldsymbol{w}_k\|_1^2},\right.\right.\\
&&\left.\left.\prod\limits_{k=1}^n r_k,\frac{\prod\limits_{k=1}^n(r_k\|\boldsymbol{w}_k\|_1)}{\left(\prod\limits_{k=1}^n(2\|\boldsymbol{w_k}\|_2\sqrt{r_k\log(r_k)}+2\|\boldsymbol{w_k}\|_\infty r_k\log(r_k)+\|\boldsymbol{w_k}\|_1)\right)-\prod\limits_{k=1}^n\|\boldsymbol{w_k}\|_1}
\right\} \right),
\end{eqnarray*}
we have
\[\left\|\mathcal{H}\hadam\left(\mathcal{T}-\widetilde{\mathcal{T}}\right)\prod\limits_{k=1}^n\left\|\boldsymbol{w_k}\right\|_1\right\|_F^2\geq \prod\limits_{k=1}^n\left(r_k\|\boldsymbol{w_k}\|_1\right).
\]
\end{claim}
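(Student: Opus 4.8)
The plan is to apply the Hanson--Wright inequality (Theorem \ref{thm:HWI}) to the quadratic form in \eqref{eqn:lb}, after first splitting off its diagonal. Write $M:=\kron_{k=1}^n({}^kU^TD_k{}^kU)$, so that $\|\mathcal H\hadam(\mathcal T-\widetilde{\mathcal T})\|_F^2=4\boldsymbol\xi^TM\boldsymbol\xi$. Since each ${}^kU$ has $\pm1$ entries, the diagonal of ${}^kU^TD_k{}^kU$ is constantly $\|\boldsymbol w_k\|_1$, so the diagonal of the Kronecker product $M$ is constantly $\prod_k\|\boldsymbol w_k\|_1$. I therefore decompose $M=\big(\prod_k\|\boldsymbol w_k\|_1\big)I+N$, where $N:=M-\big(\prod_k\|\boldsymbol w_k\|_1\big)I$ has zero diagonal, giving
\[
4\boldsymbol\xi^TM\boldsymbol\xi=4\Big(\prod_{k=1}^n\|\boldsymbol w_k\|_1\Big)\|\boldsymbol\xi\|_2^2+4\boldsymbol\xi^TN\boldsymbol\xi,
\]
and I handle the two terms separately.

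For the diagonal contribution I would lower bound $\|\boldsymbol\xi\|_2^2$. The entries of $\boldsymbol\xi$ are i.i.d., equal to $0$ with probability $1/2$ and to $\pm1$ with probability $1/4$ each, so $\|\boldsymbol\xi\|_2^2$ is a sum of $\prod_k r_k$ i.i.d.\ Bernoulli$(1/2)$ variables with mean $\tfrac12\prod_k r_k$. A Chernoff bound gives $\|\boldsymbol\xi\|_2^2\geq\tfrac38\prod_k r_k$ with probability at least $1-2\exp(-c''\prod_k r_k)$, so the diagonal term is $\geq\tfrac32\prod_k(r_k\|\boldsymbol w_k\|_1)$; this contributes the additive factor $2\exp(-c''\prod_k r_k)$ to the failure probability.

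Next I would control the off-diagonal form $\boldsymbol\xi^TN\boldsymbol\xi$. Since $N$ has zero diagonal and $\boldsymbol\xi$ has mean-zero independent entries valued in $\{0,\pm1\}$, Hanson--Wright applies directly: for any $t>0$,
\[
\mathbb P\{|\boldsymbol\xi^TN\boldsymbol\xi|>t\}\leq 2\exp\!\Big(-c\min\{t^2/\|N\|_F^2,\,t/\|N\|_2\}\Big).
\]
I would take $t$ a constant multiple of $\prod_k(r_k\|\boldsymbol w_k\|_1)$, small enough that $4t\le\tfrac12\prod_k(r_k\|\boldsymbol w_k\|_1)$, and substitute the bounds on $N$ supplied by Claim \ref{cl: U}. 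Using $\|N\|_F^2=\|M\|_F^2-\big(\prod_k\|\boldsymbol w_k\|_1\big)^2\prod_k r_k$ together with the multiplicativity $\|M\|_F^2=\prod_k\|{}^kU^TD_k{}^kU\|_F^2$ and Claim \ref{cl: U}(a), pulling a common factor $\prod_k r_k$ through numerator and denominator reduces $t^2/\|N\|_F^2$ to the first ratio in the minimum. The spectral estimate of Claim \ref{cl: U}(b) handles $t/\|N\|_2$: its first branch yields the third ratio, while its second branch $\prod_k\|\boldsymbol w_k\|_1$ gives $t/\|N\|_2\gtrsim t/\prod_k\|\boldsymbol w_k\|_1\asymp\prod_k r_k$, which is exactly the middle entry of the minimum. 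Combining the diagonal lower bound with $4|\boldsymbol\xi^TN\boldsymbol\xi|\le\tfrac12\prod_k(r_k\|\boldsymbol w_k\|_1)$ yields $4\boldsymbol\xi^TM\boldsymbol\xi\ge\prod_k(r_k\|\boldsymbol w_k\|_1)$, and a union bound over the two failure events produces the stated probability.

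The main obstacle I expect is bookkeeping rather than any new idea: confirming that the Kronecker-product norm estimates of Claim \ref{cl: U}, after the identity part is subtracted, reorganize into precisely the three ratios in the minimum. This hinges on the Frobenius orthogonality $\|M\|_F^2=\|N\|_F^2+\|D\|_F^2$ (valid because $N$ has zero diagonal and $D:=\big(\prod_k\|\boldsymbol w_k\|_1\big)I$ is diagonal), on the identity $\|D\|_F^2=\prod_k(r_k\|\boldsymbol w_k\|_1^2)$, and on tracking how the ``$-\prod_k\|\boldsymbol w_k\|_1^2$'' and ``$-\prod_k\|\boldsymbol w_k\|_1$'' corrections from removing the diagonal line up with the subtractions in the denominators. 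No inequality beyond Hanson--Wright, the Chernoff bound, and Claim \ref{cl: U} is needed; the difficulty is entirely in aligning the algebra with the target expression.
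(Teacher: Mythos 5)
Your proposal is correct and follows essentially the same route as the paper's proof: the identical split of $4\boldsymbol{\xi}^T\bigl(\kron_{k=1}^n{}^kU^TD_k{}^kU\bigr)\boldsymbol{\xi}$ into the diagonal part $4\bigl(\prod_k\|\boldsymbol{w_k}\|_1\bigr)\|\boldsymbol{\xi}\|_2^2$ and the zero-diagonal quadratic form, Hanson--Wright combined with the norm bounds of Claim \ref{cl: U} (with $t$ a constant multiple of $\prod_k r_k\|\boldsymbol{w_k}\|_1$, the two branches of the spectral bound producing the middle term $\prod_k r_k$ and the third ratio) for the latter, scalar concentration of $\|\boldsymbol{\xi}\|_2^2$ about $\tfrac12\prod_k r_k$ for the former, and a union bound. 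The only cosmetic differences are that you state the zero-diagonal hypothesis of Hanson--Wright explicitly (the paper uses it silently) and phrase the diagonal estimate as a one-sided Chernoff bound rather than the paper's two-sided subgaussian deviation bound, which changes nothing of substance.
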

\begin{proof}
 We break $\left\|\mathcal{H}\hadam(\mathcal{T}-\widetilde{\mathcal{T}})\right\|_F^2$ into two terms:
\begin{eqnarray*}
&&\left\|\mathcal{H}\hadam(\mathcal{T}-\widetilde{\mathcal{T}})\right\|_F^2\\
&=&4\boldsymbol{\xi}^T\left(\kron_{k=1}^n{}^kU^TD_k{}^kU\right)\boldsymbol{\xi}\\
&=&4\boldsymbol{\xi}^T\left(\kron_{k=1}^n\left({}^kU^TD_k{}^kU\right)-\left(\prod\limits_{k=1}^n\|\boldsymbol{w_k}\|_1\right)I\right)\boldsymbol{\xi}
+4\left(\prod\limits_{k=1}^n\|\boldsymbol{w_k}\|_1\right)\boldsymbol{\xi}^T\boldsymbol{\xi}\\
&:=&(I)+(II).
\end{eqnarray*}

For the first term (I), we will use the Hanson-Wright Inequality (see Theorem \ref{thm:HWI}). In our case, the matrix
$F=4\left(\kron_{k=1}^n\left({}^kU^TD_k{}^kU\right)-\left(\prod\limits_{k=1}^n\|\boldsymbol{w_k}\|_1\right)I\right)$. The Frobenius norm of this matrix is bounded by
\begin{eqnarray*}
\|F\|_F^2&\leq& 16\left(\prod\limits_{k=1}^n\left(2r_k^2\|\boldsymbol{w_k}\|_2^2+r_k\|\boldsymbol{w_k}\|_1^2\right)-\prod\limits_{k=1}^n\left(r_k\|\boldsymbol{w_k}\|_1^2\right)\right).
\end{eqnarray*}

The operator norm of $F$ is bounded by
\begin{eqnarray*}
&&\|F\|_2\\
&\leq&4\max\left\{\prod\limits_{k=1}^n(2\|\boldsymbol{w_k}\|_2\sqrt{r_k\log(r_k)}+2\|\boldsymbol{w_k}\|_\infty r_k\log(r_k)+\|\boldsymbol{w_k}\|_1)-\prod\limits_{k=1}^n\|\boldsymbol{w_k}\|_1, \prod\limits_{k=1}^n\|\boldsymbol{w_k}\|_1\right\}.
\end{eqnarray*}

Thus, the Hanson-Wright inequality implies that
\begin{eqnarray*}
&&\mathbb{P}\left\{(I)\geq t \right\}\\\
&\leq& 2\exp\left(-c\cdot\min\left\{\frac{t^2}{16\prod\limits_{k=1}^n\left(2r_k^2\|\boldsymbol{w_k}\|_2^2+r_k\|\boldsymbol{w_k}\|_1^2\right)-16\prod\limits_{k=1}^n\left(r_k\|\boldsymbol{w_k}\|_1^2\right)},\frac{t}{ 4\prod\limits_{k=1}^n\|\boldsymbol{w_k}\|_1},\right.\right.\\
&&  \left.\left.\frac{t}{4\left(\prod\limits_{k=1}^n(2\|\boldsymbol{w_k}\|_2\sqrt{r_k\log(r_k)}+2\|\boldsymbol{w_k}\|_\infty r_k\log(r_k)+\|\boldsymbol{w_k}\|_1)-\prod\limits_{k=1}^n\|\boldsymbol{w_k}\|_1\right)}
 \right\}\right).
\end{eqnarray*}

Plugging in $t =\frac{1}{2}\prod\limits_{k=1}^nr_k\|\boldsymbol{w_k}\|_1$, and replacing the constant $c$ with a different constant $c'$, we have
\begin{eqnarray}\label{eqn:(I)}
&&\mathbb{P}\left\{(I)\geq \frac{1}{2}\prod\limits_{k=1}^nr_k\|\boldsymbol{w_k}\|_1 \right\}\nonumber\\
&\leq& 2\exp\left(-c'\cdot\min\left\{ \frac{\prod\limits_{k=1}^nr_k}{\left(\prod\limits_{k=1}^n(2r_k(\|\boldsymbol{w}_k\|_2/\|\boldsymbol{w}_k\|_1)^2+1)\right)-1},\prod\limits_{k=1}^n r_k,\right.\right. \\
&&
\left.\left.\frac{\prod\limits_{k=1}^nr_k}{\left(\prod\limits_{k=1}^n(2\|\boldsymbol{w_k}\|_2/\|\boldsymbol{w}_k\|_1\sqrt{r_k\log(r_k)}+2\|\boldsymbol{w_k}\|_\infty/\|\boldsymbol{w}_k\|_1 r_k\log(r_k)+1)\right)-1}
\right\} \right)\nonumber.
\end{eqnarray}

Next we turn to the second term $(II)$. We write
\[(II) = 4\left(\prod\limits_{k=1}^n\|\boldsymbol{w_k}\|_1\right)\boldsymbol{\xi}^T\boldsymbol{\xi}=2\prod\limits_{k=1}^n(r_k\|\boldsymbol{w_k}\|_1)+4\left(\prod\limits_{k=1}^n\|\boldsymbol{w_k}\|_1\right)\left(\|\boldsymbol{\xi}\|_2^2-\frac{1}{2}\prod\limits_{k=1}^nr_k\right)\]
and bound the error term $4\left(\prod\limits_{k=1}^n\|\boldsymbol{w_k}\|_1\right)\left(\|\boldsymbol{\xi}\|_2^2-\frac{1}{2}\prod\limits_{k=1}^nr_k\right)$
with high probability. Observe that 
$\|\boldsymbol{\xi}\|_2^2-\frac{1}{2}\prod\limits_{k=1}^nr_k$ is a zero-mean subgaussian random variable, and thus  satisfies for all $t>0$  that
\[\mathbb{P}\left\{\left|\|\boldsymbol{\xi}\|_2^2-\frac{1}{2}\prod\limits_{k=1}^nr_k\right|\geq t\right\}\leq 2\exp\left(\frac{-c'' t^2}{\prod\limits_{k=1}^nr_k}\right)
\]
for some constant $c''$. Thus,  for any
$t>0$ we have
\[\mathbb{P}\left\{\left|4\left(\prod\limits_{k=1}^n\|\boldsymbol{w_k}\|_1\right)\left(\|\boldsymbol{\xi}\|_2^2-\frac{1}{2}\prod\limits_{k=1}^nr_k\right)\right|\geq t\right\}\leq 2\exp\left(\frac{-c^{''} t^2}{16\prod\limits_{k=1}^n(r_k\|\boldsymbol{w_k}\|_1^2)} \right).
\]

 Thus,
\begin{eqnarray}\label{eqn:(II)}
\mathbb{P}\left\{\left|(II)-2\prod\limits_{k=1}^n(r_k\|\boldsymbol{w_k}\|_1)\right|\geq \frac{1}{2}\prod\limits_{k=1}^nr_k\|\boldsymbol{w_k}\|_1\right\}&\leq&2\exp\left(\frac{-c^{''}}{64} \prod\limits_{k=1}^nr_k \right).
\end{eqnarray}

Combing \eqref{eqn:(I)} and \eqref{eqn:(II)}, we can conclude that with probability at  least \begin{multline*}
1 -2\exp\left(-c^{''}\prod\limits_{k=1}^nr_k\right)-2\exp\left(-c'\cdot\min\left\{ \frac{\prod\limits_{k=1}^nr_k}{\left(\prod\limits_{k=1}^n(2r_k(\|\boldsymbol{w}_k\|_2/\|\boldsymbol{w}_k\|_1)^2+1)\right)-1},\right.\right.\\
\left.\left.\prod\limits_{k=1}^n r_k,\frac{\prod\limits_{k=1}^nr_k}{\left(\prod\limits_{k=1}^n(2\|\boldsymbol{w_k}\|_2/\|\boldsymbol{w}_k\|_1\sqrt{r_k\log(r_k)}+2\|\boldsymbol{w_k}\|_\infty/\|\boldsymbol{w}_k\|_1 r_k\log(r_k)+1)\right)-1}
\right\} \right),
\end{multline*}
the following holds
\begin{eqnarray*}
\left\|\mathcal{H}\hadam\left(\mathcal{T}-\widetilde{\mathcal{T}}\right)\right\|_F^2&=&(I)+(II)\\
&\geq&2\prod\limits_{k=1}^n(r_k\|\boldsymbol{w_k}\|_1)-|II-2\prod\limits_{k=1}^n(r_k\|\boldsymbol{w_k}\|_1)|-(I)\\
&\geq& \prod\limits_{k=1}^n(r_k\|\boldsymbol{w_k}\|_1)=\left(\prod\limits_{k=1}^nr_k\right)\|\mathcal{H}\hadam\mathcal{B}\|_F^2.
\end{eqnarray*}

By a union of bound over all of the points in $S$, we establish items 1 and 3 of \mbox{the lemma}.
\end{proof}
Now we are ready to prove the lower bound in Theorem \ref{thm:bfsomega}. First we give a formal statement for the lower bound in Theorem \ref{thm:bfsomega} by introducing the constant $C'$ to  characterize the ``flatness'' of $\mathcal{W}$.
 \begin{theorem}[Lower bound for low-rank tensor when $\mathcal{W}$ is flat and $\Omega\sim \mathcal{W}$]\label{thm: lb}Let $\mathcal{W}=\boldsymbol{w_1}\out \cdots\out \boldsymbol{w_n}\in\mathbb{R}^{d_1\times \cdots\times d_n}$  be a CP rank-1 tensor so that all $(i_1,\cdots,i_n)\in[d_1]\times\cdots\times [d_n]$ with 
 $\|\mathcal{W}\|_\infty\leq 1$, so that
 \[\max_{i_k}|\boldsymbol{w_k}(i_k)| \leq C'\min_{i_k}|\boldsymbol{w_k}(i_k)|, \text{ for all }k=1,\cdots,n.
 \]
 
  Suppose that we choose each $(i_1,\cdots,i_n)\in[d_1]\times\cdots\times[d_n]$ independently with probability $\mathcal{W}_{i_1\cdots i_n}$ to form a set $\Omega\subseteq[d_1]\times\cdots\times[d_n]$. 
 %Suppose that $\Omega\subseteq[d_1]\times\cdots\times[d_n]$ so that for each $i_1\in[d_1],\cdots, i_n\in[d_n]$, $(i_1,\cdots,i_n)\in\Omega$ with probability $\mathcal{W}_{i_1\cdots i_n}$, independently for each $(i_1,\cdots,i_n)$.
 Then with probability at least $1-\exp(-C\cdot m)$ over the choice of $\Omega$, the following holds:
 
Let $\sigma, \beta>0$ and let $K_{\boldsymbol{r}}\subseteq\mathbb{R}^{d_1\times \cdots\times d_n}$ be the cone of the tensor with Tucker rank \mbox{$\boldsymbol{r}=[r_1~\cdots ~r_n]$}. For any algorithm $\mathcal{A}:\mathbb{R}^{\Omega}\rightarrow\mathbb{R}^{d_1\times \cdots\times d_n}$ that takes as input $\mathcal{T}_\Omega+\mathcal{Z}_\Omega$ and outputs a guess $\widehat{\mathcal{T}}$ for $\mathcal{T}$, for $\mathcal{T}\in K_{\boldsymbol{r}}\cap\beta \mathbf{B}_\infty$ and $\mathcal{Z}_{i_1\cdots i_n}\sim\mathcal{N}(0,\sigma^2)$, then there is some $\mathcal{T}\in K_{\boldsymbol{r}}\cap\beta \mathbf{B}_{\infty}$ \mbox{so that}
\begin{eqnarray*}
&&\frac{\|\mathcal{W}^{(1/2)}\hadam(\mathcal{A}(\mathcal{T}_\Omega+\mathcal{Z}_\Omega)-\mathcal{T})\|_F}{\|\mathcal{W}^{(1/2)}\|_F}\\
&\geq& c\cdot\min\left\{\frac{\beta}{\sqrt{\log(8\prod\limits_{k=1}^nd_k)}},   \frac{\sigma }{\sqrt{|\Omega|}}\sqrt{\prod_{k=1}^nr_k}\cdot\min\left\{\sqrt{\frac{1}{\left(\prod\limits_{k=1}^n(1+2C'^2r_k/d_{k})\right)-1}},\right.\right.\\
&&\left.\left.1,\sqrt{\frac{1}{\left(\prod\limits_{k=1}^n(2C'\sqrt{r_k/d_k\log(r_k)}+2C' r_k/d_k\log(r_k)+1)\right)-1}} \right\} \right\},
\end{eqnarray*}
with probability at least $\frac{1}{2}$ over the randomness of $\mathcal{A}$ and the choice of $\mathcal{Z}$. Above $c$, $C$  are constants which depend only on $C'$.
\end{theorem}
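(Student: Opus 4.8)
The plan is to derive the lower bound by combining the Fano-type reduction of Lemma~\ref{lmm:FanoE} with the explicit packing set produced in Lemma~\ref{lmm:lb}. First I would instantiate Lemma~\ref{lmm:lb} with the weight tensor $\mathcal{H}=\mathcal{W}^{(1/2)}$ and the all-ones tensor $\mathcal{B}=\boldsymbol{1}\out\cdots\out\boldsymbol{1}$, which is CP rank-$1$ and satisfies $\|\mathcal{B}\|_\infty=1$. With $\boldsymbol{h}_k=\boldsymbol{w}_k^{(1/2)}$ and $\boldsymbol{b}_k=\boldsymbol{1}$, the derived vectors $\boldsymbol{w}_k=(\boldsymbol{h}_k\hadam\boldsymbol{b}_k)^{(2)}$ in the lemma coincide exactly with the factors of $\mathcal{W}$, and $\|\mathcal{H}\hadam\mathcal{B}\|_F=\|\mathcal{W}^{(1/2)}\|_F$. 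This yields a set $S\subseteq K_{\boldsymbol{r}}\cap\gamma\mathbf{B}_\infty$ with $\gamma=\sqrt{\tfrac12(\prod_k r_k)\log(8\prod_k d_k)}$ such that (item~2) $\|\mathcal{T}_\Omega\|_F\le 2\sqrt{\prod_k r_k}\,\|\mathcal{B}_\Omega\|_F=2\sqrt{\prod_k r_k}\sqrt{|\Omega|}$ for all $\mathcal{T}\in S$, and (item~3) $\min_{\mathcal{T}\neq\widetilde{\mathcal{T}}\in S}\|\mathcal{W}^{(1/2)}\hadam(\mathcal{T}-\widetilde{\mathcal{T}})\|_F\ge\sqrt{\prod_k r_k}\,\|\mathcal{W}^{(1/2)}\|_F$.

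Next I would convert the flatness hypothesis into bounds on the norm ratios appearing in the size estimate for $S$. Since $\max_{i_k}|\boldsymbol{w}_k(i_k)|\le C'\min_{i_k}|\boldsymbol{w}_k(i_k)|$, a direct computation gives $\|\boldsymbol{w}_k\|_1\ge d_k\min_{i_k}\boldsymbol{w}_k(i_k)$, $\|\boldsymbol{w}_k\|_2^2\le d_k C'^2\min_{i_k}\boldsymbol{w}_k(i_k)^2$ and $\|\boldsymbol{w}_k\|_\infty\le C'\min_{i_k}\boldsymbol{w}_k(i_k)$, whence $(\|\boldsymbol{w}_k\|_2/\|\boldsymbol{w}_k\|_1)^2\le C'^2/d_k$ and $\|\boldsymbol{w}_k\|_\infty/\|\boldsymbol{w}_k\|_1\le C'/d_k$. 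Substituting these into the exponent of $N$ in Lemma~\ref{lmm:lb} and using monotonicity of the minimum over its three arguments, I obtain $\log|S|\gtrsim\min\{A',\prod_k r_k,D'\}$ with $A'=\frac{\prod_k r_k}{\prod_k(1+2C'^2 r_k/d_k)-1}$ and $D'=\frac{\prod_k r_k}{\prod_k(2C'\sqrt{(r_k/d_k)\log r_k}+2C'(r_k/d_k)\log r_k+1)-1}$; these are precisely the reciprocals appearing inside the inner minimum of the claimed bound once a factor $\sqrt{\prod_k r_k}$ is pulled out.

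I would then apply Lemma~\ref{lmm:FanoE} with $K=K_{\boldsymbol{r}}$. Because $K_{\boldsymbol{r}}$ is a cone, $S\subseteq K$ and $\kappa S\subseteq K$ hold for every $\kappa>0$, and taking $\kappa\le\beta/\gamma$ forces the hard instance, which lies in $\kappa S\subseteq\kappa\gamma\mathbf{B}_\infty$, into $K_{\boldsymbol{r}}\cap\beta\mathbf{B}_\infty$ as required. To also meet the KL constraint of the lemma I set
\[
\kappa=\min\!\left\{\frac{\beta}{\gamma},\ \frac{\sigma\sqrt{\log|S|}}{4\max_{\mathcal{T}\in S}\|\mathcal{T}_\Omega\|_F}\right\}.
\]
Lemma~\ref{lmm:FanoE} then produces some $\mathcal{T}\in K_{\boldsymbol{r}}\cap\beta\mathbf{B}_\infty$ for which, with probability at least $\tfrac12$ over $\mathcal{Z}$, the error is at least $\frac{\kappa}{2}\min_{\mathcal{T}\neq\widetilde{\mathcal{T}}\in S}\|\mathcal{W}^{(1/2)}\hadam(\mathcal{T}-\widetilde{\mathcal{T}})\|_F\ge\frac{\kappa}{2}\sqrt{\prod_k r_k}\,\|\mathcal{W}^{(1/2)}\|_F$, so dividing by $\|\mathcal{W}^{(1/2)}\|_F$ leaves $\frac{\kappa}{2}\sqrt{\prod_k r_k}$.

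Finally I would split on which term achieves the minimum defining $\kappa$. When $\kappa=\beta/\gamma$, substituting $\gamma$ gives $\frac{\kappa}{2}\sqrt{\prod_k r_k}=\frac{\beta}{\sqrt2\,\sqrt{\log(8\prod_k d_k)}}$, the first term of the outer minimum. When the second argument binds, using item~2 to replace $\max_{\mathcal{T}\in S}\|\mathcal{T}_\Omega\|_F$ by $2\sqrt{\prod_k r_k}\sqrt{|\Omega|}$ and the estimate $\log|S|\gtrsim\min\{A',\prod_k r_k,D'\}$ yields $\frac{\kappa}{2}\sqrt{\prod_k r_k}\gtrsim\frac{\sigma}{\sqrt{|\Omega|}}\sqrt{\prod_k r_k}\cdot\min\{\sqrt{A'/\prod_k r_k},\,1,\,\sqrt{D'/\prod_k r_k}\}$, the second term. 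The probability over $\Omega$ follows from combining the construction guarantee of Lemma~\ref{lmm:lb} with the concentration of $|\Omega|$ in Lemma~\ref{lmm:est_omega}, and all absolute constants fold into $c,C$. The main obstacle I anticipate is the calibration of $\kappa$: it must be small enough to keep $\kappa S$ inside $\beta\mathbf{B}_\infty$ and to satisfy the KL bound simultaneously, yet large enough that neither constraint costs more than a constant factor, after which one must verify that the flatness substitution reproduces the displayed expression term by term rather than merely up to the crude ratio bounds.
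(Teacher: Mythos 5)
Your proposal is correct and follows essentially the same route as the paper's own proof: instantiating Lemma~\ref{lmm:lb} with $\mathcal{H}=\mathcal{W}^{(1/2)}$ and the all-ones $\mathcal{B}$, translating flatness into the bounds $(\|\boldsymbol{w}_k\|_2/\|\boldsymbol{w}_k\|_1)^2\leq C'^2/d_k$ and $\|\boldsymbol{w}_k\|_\infty/\|\boldsymbol{w}_k\|_1\leq C'/d_k$, calibrating $\kappa$ against both the $\beta\mathbf{B}_\infty$ constraint and the KL condition of Lemma~\ref{lmm:FanoE}, and invoking Lemma~\ref{lmm:est_omega} for the probability over $\Omega$. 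Your formulation of $\kappa$ as $\min\{\beta/\gamma,\ \sigma\sqrt{\log|S|}/(4\max_{\mathcal{T}\in S}\|\mathcal{T}_\Omega\|_F)\}$ is just a cleaner packaging of the paper's explicit four-term minimum and yields the same bound up to constants.
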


\begin{proof}
 Let $m=\|\mathcal{W}^{(1/2)}\|_F^2=\prod\limits_{k=1}^n\|\boldsymbol{w_k}\|_1$, so that $\mathbb{E}|\Omega|=m$.

We instantiate Lemma \ref{lmm:lb} with $\mathcal{H}=\mathcal{W}^{(1/2)}$ and $\mathcal{B}$ being the tensor whose entries are all $1$. Let $S$ be the set guaranteed by Lemma \ref{lmm:lb}. We have \[\max_{\mathcal{T}\in S}\|\mathcal{T}\|_\infty\leq \sqrt{\frac{1}{2}\log\left(8\prod\limits_{k=1}^nd_k\right)\prod\limits_{k=1}^nr_k}.
\] and
\[\max_{\mathcal{T}\in S}\|\mathcal{T}_\Omega\|_F\leq 2\sqrt{\prod\limits_{k=1}^nr_k}\|\mathcal{B}_\Omega\|_F=2\sqrt{|\Omega|\prod\limits_{k=1}^nr_k}.
\]

We also have
\[\|\mathcal{W}^{(1/2)}\hadam(\mathcal{T}-\mathcal{T}')\|_F\geq\sqrt{\prod\limits_{k=1}^nr_k}\|\mathcal{W}^{(1/2)}\|_F=\sqrt{m\prod\limits_{k=1}^nr_k}
\]
for $\mathcal{T}\neq\mathcal{T}'\in S$. 
{Using the assumption that $\boldsymbol{w}_k$ are flat, the size of the set $S$ is bigger than or equal to  
\begin{eqnarray*}
N&=&C\exp\left({c\cdot\min\left\{ \frac{\prod\limits_{k=1}^nr_k}{\left(\prod\limits_{k=1}^n(2r_k(\|\boldsymbol{w}_k\|_2/\|\boldsymbol{w}_k\|_1)^2+1)\right)-1},\prod\limits_{k=1}^n r_k,\right.}\right.\\
 &&\left. \left.\frac{\prod\limits_{k=1}^nr_k}{\left(\prod\limits_{k=1}^n(2\|\boldsymbol{w_k}\|_2/\|\boldsymbol{w}_k\|_1\sqrt{r_k\log(r_k)}+2\|\boldsymbol{w_k}\|_\infty/\|\boldsymbol{w}_k\|_1 r_k\log(r_k)+1)\right)-1}\right\} \right)\\
&\geq&C\exp\left(c\cdot\min\left\{ \frac{\prod\limits_{k=1}^nr_k}{\left(\prod\limits_{k=1}^n(2C'^2r_k/d_k+1)\right)-1},\prod\limits_{k=1}^nr_k,\right.\right.\\
&&\left.\left.\frac{\prod\limits_{k=1}^nr_k}{\left(\prod\limits_{k=1}^n(2C'\sqrt{r_k\log(r_k)/d_k}+2C' r_k\log(r_k)/d_k+1)\right)-1}
\right\} \right)
\\
 &\geq&\exp\left(C''\cdot\min\left\{ \frac{\prod\limits_{k=1}^nr_k}{\left(\prod\limits_{k=1}^n(2C'^2r_k/d_k+1)\right)-1},\prod\limits_{k=1}^nr_k,\right.\right.\\
 &&\left.\left.\frac{\prod\limits_{k=1}^nr_k}{\left(\prod\limits_{k=1}^n(2C'\sqrt{r_k\log(r_k)/d_k}+2C' r_k\log(r_k)/d_k+1)\right)-1}
\right\}  \right),
\end{eqnarray*}
where $C''$ depends on $c$ and $C$. }
Set \begin{multline*}
\kappa=\min\left\{
\frac{\beta}{\sqrt{\frac{1}{2}\log(8\prod\limits_{k=1}^nd_k)\prod\limits_{k=1}^nr_k}}, \frac{\sigma \sqrt{C''}}{8\sqrt{|\Omega|}}\sqrt{\frac{\prod\limits_{k=1}^nd_k}{(\prod\limits_{k=1}^n(d_k+2C'^2r_k))-\prod\limits_{k=1}^nd_k}}, \frac{\sigma \sqrt{C''}}{8\sqrt{|\Omega|}},\right.\\
\left.\frac{\sigma \sqrt{C''}}{8\sqrt{|\Omega|}}\sqrt{\frac{\prod\limits_{k=1}^nd_k}{(\prod\limits_{k=1}^n(2C'\sqrt{d_kr_k\log(r_k)}+2C' r_k\log(r_k)+d_k))-\prod\limits_{k=1}^nd_k}}~\right\}.
\end{multline*}

Observe that $\frac{\sigma\sqrt{\log|S|}}{4\max_{\mathcal{T}\in S}\|\mathcal{T}_\Omega\|_F}
\geq\frac{\sigma\sqrt{\log(N)}}{4\max_{\mathcal{T}\in S}\|\mathcal{T}_\Omega\|_F}$ and
\begin{eqnarray*}
&&\frac{\sigma\sqrt{\log(N)}}{4\max_{\mathcal{T}\in S}\|\mathcal{T}_\Omega\|_F}\\
&\geq&\frac{\sigma \sqrt{C''}}{8\sqrt{|\Omega|\prod\limits_{k=1}^nr_k}}\cdot\min\left\{ \frac{\prod\limits_{k=1}^nr_k}{\left(\prod\limits_{k=1}^n(2C'^2r_k/d_k+1)\right)-1},\prod\limits_{k=1}^nr_k,\right.\\
&&\left.\frac{\prod\limits_{k=1}^nr_k}{\left(\prod\limits_{k=1}^n(2C'\sqrt{r_k\log(r_k)/d_k}+2C' r_k\log(r_k)/d_k+1)\right)-1}
\right\}\\
&=&\frac{\sigma \sqrt{C''}}{8\sqrt{|\Omega|}}\cdot\min\left\{ \sqrt{\frac{\prod\limits_{k=1}^nd_k}{(\prod\limits_{k=1}^n(d_k+2C'^2r_k))-\prod\limits_{k=1}^nd_k}}, 1,\right.\\
&&\left.\sqrt{\frac{\prod\limits_{k=1}^nd_k}{(\prod\limits_{k=1}^n(2C'\sqrt{d_kr_k\log(r_k)}+2C' r_k\log(r_k)+d_k))-\prod\limits_{k=1}^nd_k}} 
~\right\}\geq\kappa,
\end{eqnarray*}
so this is a legitimate choice of $\kappa$ in Lemma \ref{lmm:FanoE}. Next, we verify that $\kappa S\subseteq K\cap \beta \mathbf{B}_\infty$. Indeed, we have
\[\kappa\max_{\mathcal{S}}\|\mathcal{T}\|_\infty\leq\kappa \sqrt{\frac{1}{2}\log(8\prod\limits_{k=1}^nd_k)\prod\limits_{k=1}^nr_k}\leq\beta,
\]
so $\kappa S\subseteq\beta \mathbf{B}_\infty$, and every element of $\mathcal{S}$ has Tucker rank $\boldsymbol{r}$ by construction.

Then Lemma \ref{lmm:FanoE} concludes that if $\mathcal{A}$ works on $K_{\boldsymbol{r}}\cap\beta \mathbf{B}_\infty$, then there is a tensor $\mathcal{T}\in K_{\boldsymbol{r}}\cap \beta \mathbf{B}_\infty$ so that
\begin{eqnarray*}
&&\|\mathcal{W}^{(1/2)}\hadam(\mathcal{A}(\mathcal{T}_\Omega+\mathcal{Z}_\Omega)-\mathcal{T})\|_F\\
&\geq&\frac{\kappa}{2}\min_{\mathcal{T}\neq\mathcal{T}'\in S}\|\mathcal{W}^{(1/2)}\hadam(\mathcal{T}-\mathcal{T}')\|_F\\
&\geq&\frac{1}{2}\min\left\{\frac{\beta}{\sqrt{\frac{1}{2}\log(8\prod\limits_{k=1}^nd_k)\prod\limits_{k=1}^nr_k}},\frac{\sigma \sqrt{C''}}{8\sqrt{|\Omega|}}\sqrt{\frac{\prod\limits_{k=1}^nd_k}{(\prod\limits_{k=1}^n(d_k+2C'^2r_k))-\prod\limits_{k=1}^nd_k}}, \frac{\sigma \sqrt{C''}}{8\sqrt{|\Omega|}},\right.\\
&&\left.\frac{\sigma \sqrt{C''}}{8\sqrt{|\Omega|}}\sqrt{\frac{\prod\limits_{k=1}^nd_k}{(\prod\limits_{k=1}^n(2C'\sqrt{d_kr_k\log(r_k)}+2C' r_k\log(r_k)+d_k))-\prod\limits_{k=1}^nd_k}}\right\}\sqrt{m\prod\limits_{k=1}^nr_k}\\
&=&\min\left\{  \frac{\beta\sqrt{m}}{\sqrt{2\log(8\prod\limits_{k=1}^n d_k)}}, \frac{\sigma \sqrt{C''m}}{16\sqrt{|\Omega|}}\sqrt{\prod\limits_{k=1}^nr_k}\cdot\min\left\{\frac{1}{\sqrt{\left(\prod\limits_{k=1}^n(1+2C'^2r_k/d_k)\right)-1}},\right.\right.\\
&&\left.\left.1,\frac{1}{\sqrt{\left(\prod\limits_{k=1}^n(2C'\sqrt{r_k/d_{k}\log(r_k)}+2C' r_k/d_{k}\log(r_k)+1)\right)-1}}\right\}\right\}.
\end{eqnarray*}

Additionally, by Lemma \ref{lmm:est_omega}, we conclude that
\begin{eqnarray*}
&&\frac{\|\mathcal{W}^{(1/2)}\hadam(\mathcal{A}(\mathcal{T}_\Omega+\mathcal{Z}_\Omega)-\mathcal{T})\|_F}{\|\mathcal{W}^{(1/2)}\|_F}\\
&\geq&\tilde{c}\cdot\min\left\{\frac{\beta}{\sqrt{\log(8\prod\limits_{k=1}^nd_k)}},   \frac{\sigma }{\sqrt{|\Omega|}}\sqrt{\prod_{k=1}^nr_k}\cdot\min\left\{\frac{1}{\sqrt{\left(\prod\limits_{k=1}^n(1+2C'^2r_k/d_{k})\right)-1}},\right.\right.\\
&&\left.\left.1,\frac{1}{\sqrt{\left(\prod\limits_{k=1}^n(2C'\sqrt{r_k/d_k\log(r_k)}+2C' r_k/d_k\log(r_k)+1)\right)-1}}\right\}\right\},
\end{eqnarray*}
where $\tilde{c}$ depends on the above constants.
\end{proof}
\begin{Remark}
Consider the special case when $\mathcal{T}\in\mathbb{R}^{d_1\times d_2}$ with $d_1\leq d_2$. Then we can consider the reconstruction of $S$ in Lemma \ref{lmm:lb} with $\mathcal{H}=\mathcal{W}^{(1/2)}$, $\mathcal{B}$ being the tensor whose entries are all 1, $\mathcal{C}\in\{\pm 1\}^{r\times d_2}$, ${}^1U\in\{\pm 1\}^{d_1\times r}$ and ${}^2U\in\{\pm 1\}^{d_2\times d_2}$ which implies that $r_1=r$ and $r_2=d_2$. Thus, we have
\begin{eqnarray*}
&&\frac{\|\mathcal{W}^{(1/2)}\hadam(\mathcal{A}(\mathcal{T}_\Omega+\mathcal{Z}_\Omega)-\mathcal{T})\|_F}{\|\mathcal{W}^{(1/2)}\|_F}
\geq\tilde{c}\cdot\min\left\{\frac{\sigma }{\sqrt{|\Omega|}}\sqrt{rd_2},\frac{\beta}{\sqrt{\log(8d_1d_2)}}\right\},
\end{eqnarray*}
which has the same bound as the one in (\cite{foucart2019weighted} Lemma 28).
\end{Remark}

\section*{Acknowledgements}The authors are supported by NSF CAREER DMS 1348721 and NSF BIGDATA 1740325.
 The  authors take pleasure in thanking Hanqin Cai,  Keaton Hamm, Armenak Petrosyan, Bin Sun, and  Tao Wang for comments and suggestions on  the manuscript.  

\bibliographystyle{plain}
\bibliography{tensor}

\end{document}